\newtheorem{theorem}{Theorem} %
\newtheorem{lemma}{Lemma}
\newtheorem{corollary}{Corollary}
\newtheorem{proposition}{Proposition}
\newtheorem{definition}{Definition}
\newtheorem{remark}{Remark}
\newenvironment{proof}{{\it Proof:\enspace}}{\hfill $\blacksquare$\par}
\newenvironment{proof of Prop.ODE}{{\it Proof of Proposition~\ref{Prop.ODE}:\enspace}}{\hfill $\blacksquare$\par}
\newenvironment{proof of PDE}{{\it Proof of Proposition \ref{Prop.8}:\enspace}}{\hfill $\blacksquare$\par}
\let\original@makecaption\@makecaption
\long\def\small@makecaption#1#2{%
	\vskip\abovecaptionskip
	\sbox\@tempboxa{\fontsize{8}{11.5}\selectfont {\fontsize{8.5}{11.5}\selectfont #1:} #2}%
	\ifdim \wd\@tempboxa >\hsize
	\fontsize{8}{11.5}\selectfont {\fontsize{8.5}{11.5}\selectfont #1}\quad #2\par
	\else
	\global \@minipagefalse
	\hb@xt@\hsize{\hfil\box\@tempboxa\hfil}%
	\fi
	\vskip\belowcaptionskip}
\newcommand{\smallcaption}[1]{%
	\let\@makecaption\small@makecaption
	\caption{#1}%
	\let\@makecaption\original@makecaption
}
\begin{document}

%
\begin{frontmatter}

\title{Local integral input-to-state stability for non-autonomous infinite-dimensional systems} 

\author{Yongchun Bi$^{1,2}$},
\author{Panyu Deng$^{2}$},
\author{Jun Zheng$^{2,3}$}\ead{zhengjun2014@aliyun.com},
\author{Guchuan Zhu$^{3}$}

\address{$^{1}${State Key Laboratory of Rail Transit Vehicle System}, Southwest Jiaotong University, Chengdu, China\\
	$^{2}${School of Mathematics}, Southwest Jiaotong University, Chengdu, China\\
                $^{3}$Department of Electrical Engineering, Polytechnique
                Montr\'{e}al,
                Montreal,  Canada}  

\begin{abstract}                           
 In this paper, we   prove  comparison principles for nonlinear  differential equations with time-varying coefficients  and develop Lyapunov analytical tools  for  the  integral input-to-state stability (iISS) analysis of nonlinear non-autonomous  infinite-dimensional systems,  which   involve  nonlinearities satisfying  a superlinear growth,   {bringing} difficulties to the iISS {analysis.}
 Specifically,   our approach starts by establishing several forms of comparison principles   for a wide range of ordinary differential equations having time-varying coefficients and superlinear terms, paving the way to conduct  iISS assessment for general nonlinear non-autonomous  infinite-dimensional systems within the Lyapunov stability framework. Then, by using  the comparison principles,  we  prove a  local  {iISS}  {(LiISS)} Lyapunov theorem for the nonlinear non-autonomous  infinite-dimensional systems in the framework of Banach spaces.  {Furthermore,}
 we   provide  sufficient conditions of the existence of a local iISS Lyapunonv functional (LiISS-LF) and construct LiISS-LFs for the systems in the framework of Hilbert spaces. Finally,   we    preset two examples to illustrate the proposed {Lyapunov} method for the LiISS analysis: one is  to show how to obtain the LiISS of a nonlinear  finite-dimensional system  with time-varying coefficients and superlinear terms under linear state feedback control law while another one is to show how to
 employ the interpolation inequalities to handle superliner terms and establish the LiISS-LF for a class of multi-dimensional parabolic equations with space-time-varying coefficients. To demonstrate the validity of the results, numerical experiments are also conducted to verify the LiISS of these two classes of systems.
\end{abstract}

\begin{keyword}                            
Integral input-to-state stability,  infinite-dimensional system, non-autonomous system, comparison principle, Lyapunov method
\end{keyword}                              
\end{frontmatter}
\endNoHyper

\section{Introduction}\label{sec1}

The notion of input-to-state stability (ISS)   was initially introduced  by Sontag for nonlinear finite-dimensional  systems \cite{sontag1990further}, which provides a crucial tool for stability analysis   by characterizing  the impact of external inputs on the  system dynamics.
In the past few decades,   the ISS  theory has witnessed rapid development in the  nonlinear and robust control of finite-dimensional systems; see, e.g.,    \cite{Aghaeeyan2020,freeman2008robust,Khalil2001non,zhu2020} for the robust stability analysis of nonlinear systems;  \cite{andrieu2009unifying,arcak2001nonlinear,chen2012CNSNS,zhuang2025} for the design of nonlinear observers;  and \cite{dashkovskiy2007iss,dashkovskiy2010small,jiang1994small} for
small-gain theorems  of large-scale networks,   just to cite a few.

It is worth noting that  in \cite{sontag1998comments}, Sontag   proposed a new stability concept, which is weaker than the ISS,  to extend $L^2$ stability to nonlinear systems, namely, the  integral input-to-state stability (iISS). Allowing for unbounded inputs of finite energy, the iISS expands the theoretical framework of   stability analysis by using an integral norm for inputs while employing a supremum norm   for states,  thereby encompassing nonlinear systems that do not meet ISS requirements.
Especially,
various characterizations of iISS  have been provided in \cite{angeli2000IEEETAC} based on dissipation inequalities and other necessary and sufficient conditions involving alternative and nontrivial characterizations, thus laying a foundation for the theory and applications of iISS for nonlinear systems.

Over the past two decades,  the  iISS   of finite-dimensional systems has been extensively extended to infinite-dimensional  systems {\cite{damak2021input,damak2023input,Heni:2025,Hosfeld2022,Karafyllis:book,Lin2018,JoseL2023,mironchenko2016MCRF,Mironchenko2015SIAM,mironchenko2020SIAM,pepe2006, Zheng2018MCSS,zheng2021ALM,zheng2018input,Zheng2018CDC}.}  For infinite-dimensional systems having a   specific form  and time-invariant coefficients, the iISS   has been extensively studied {\cite{Heni:2025,Karafyllis:book,Lin2018,Mironchenko2015SIAM,pepe2006,zheng2018input,Zheng2018MCSS,Zheng2018CDC}}. Especially,   for  partial differential equations (PDEs) with time-invariant coefficients,    Lyapunov-based methods have been developed  to analyze the iISS of  nonlinear parabolic PDEs and   a small-gain criterion has been established for interconnected iISS systems, with applications to reaction-diffusion equations, in \cite{Mironchenko2015SIAM}; and by constructing Lyapunov functionals, the iISS  of  a class of semi-linear parabolic PDEs and {coupled} PDEs   were proved in  \cite{Zheng2018CDC,zheng2018input}  and \cite{Zheng2018MCSS}, respectively, in the presence of boundary disturbances.

For autonomous infinite-dimensional systems under an abstract form, the authors of \cite{mironchenko2016MCRF,mironchenko2020SIAM}  investigated  the iISS of the following  bilinear system: 
\begin{align*}
	\dot{x}(t) = &   A x(t) + B u(t) + C(x(t), u(t)),   t> 0,\notag\\
	{x(0)=}& {x_0,}
\end{align*}
where   $x \in X$ is the state, $x_0 \in X$ is the initial state, and $u \in U$ is the input, $A: D(A) \subseteq X \to X$ is the generator of a strongly continuous semigroup $(T(t))_{t \geq 0}$, $B \in \mathcal{L}(U, X)$ is a bounded   linear input operator, and $C: X \times U \to X$ is a bilinear map,   where  $X$  and $U$ are both {Banach spaces}.
Especially,   by using Gronwall's inequality and logarithmic transformations, the authors provided an iISS characterization for the system, demonstrating that the {global} asymptotic stability  at zero uniformly with
respect to {(w.r.t.)} state  (0-UGASs) implies the iISS. Furthermore, in Hilbert spaces,  the authors have successfully constructed  iISS Lyapunov functionals (iISS-LFs)    to assess the iISS; see \cite{mironchenko2016MCRF,mironchenko2020SIAM}.       It is worth noting that, based on semigroup theory and admissibility conditions,  the iISS criteria for bilinear systems with unbounded control operators  were established   in \cite{Hosfeld2022},  and iISS characterizations   were provided    for linear systems with unbounded control operators   within the framework of Orlicz spaces  in \cite{Jacob2018}, respectively.

For non-autonomous infinite-dimensional systems under an abstract form,   the  first   attempt to   investigate  the iISS is due to \cite{damak2021input}, where the authors  {studied} the following non-autonomous bilinear infinite-dimensional system:
\begin{align*}
	\dot{x} =& A(t)x + B(t)u + G(t, x, u),  t \geq t_0 \geq 0, \\
	x(t_0) =& x_0,
\end{align*}
where 
$A(t)$  is a time-varying linear operator, $B(t)$ is a bounded linear operator,
and $G:[t_0,+\infty)\times X\times U\rightarrow X$ is a nonlinear functional, where  $X$ is a {Banach space} and $U$ is a  normed linear space. Especially,  the equivalence between the iISS and the  0-UGASs property    was established, and  an explicit method for constructing an iISS Lyapunov functional   in Hilbert spaces was provided.
In \cite{damak2023input} {and \cite{Heni:2025}, the authors provided   iISS  (and ISS) characterizations}  for non-autonomous {bilinear and linear   infinite-dimensional   systems, respectively.  Moreover,   in \cite{Heni:2025}, the authors   constructed a coercive  iISS-LF  for a certain class of time-varying semi-linear evolution equations with unbounded linear operator $A(t)$ based on the  application of the 0-UGASs property of the system. In addition,
in}    \cite{JoseL2023}, the authors studied a class of infinite-dimensional time-varying control systems, demonstrating the equivalence among the iISS, 
0-UGASs, and the uniform bounded-energy input/bounded state  properties.

It is noteworthy that  the  method adopted in {\cite{mironchenko2016MCRF,mironchenko2020SIAM,Hosfeld2022,damak2021input,damak2023input,Heni:2025,JoseL2023}} for obtaining the iISS  highly relies on verifying the 0-UGASs property of the  considered systems.    Although proving the 0-UGASs property seems to be relatively straightforward for autonomous infinite-dimensional systems by using the semigroup theory, it becomes  rather challenging for non-autonomous infinite-dimensional systems{, particularly, PDEs with time-varying coefficients.}
The main reason is that, unlike   autonomous infinite-dimensional systems, the non-autonomous infinite-dimensional systems are associated to a family of time-dependent operators $A(t)$,   which precludes  the existence of a single infinitesimal generator. This inherent structure makes the cornerstone results of semigroup theory, such as the Hille-Yosida theorem,  inapplicable. 
Consequently, the 0-UGASs property  must be ascertained through more complex  tools  for nonlinear systems such as  the construction of time-dependent Lyapunov functionals, moving beyond the classical spectral-based approach for linear systems.

%

It is also worth noting that  all of  the nonlinear terms considered in {\cite{mironchenko2016MCRF,mironchenko2020SIAM,Hosfeld2022,damak2021input,damak2023input,Heni:2025,JoseL2023}} have    natural growth condition  { w.r.t. } the state $x$. For instance, in \cite{mironchenko2016MCRF}, the nonlinear term $C: X \times U \to X$ was supposed to satisfy the following condition:
\begin{align*}
	\|C(x, u)\|_X \leq k\|x\|_X   \xi(\|u\|_U),   \forall x \in X, u \in U
\end{align*}
with a positive constant $k$ and a function $\xi \in \mathcal{K}$;   the nonlinear term $G$ in {\cite{damak2021input,Heni:2025}} was supposed to satisfy the following condition:
\begin{align*}
	{\|G(t,x,u)\|_X}\leq k \|x\|_X \phi(\|u\|_U),{\forall x\in X,u\in U,t\geq t_0,}
\end{align*}
where $k$ is a positive constant and $\phi$ is a  {$\mathcal{K}$-function}; and in \cite{JoseL2023},   the condition for the considered nonlinear term $f:\mathbb{R}_{\geq 0}\times X \times U \rightarrow X$   is included by the following    growth condition:
\begin{align*}
	\|f(t,x,u)\|_{{X}}\leq k(1+  \|x\|_{{X}} )\gamma(\|u\|_U),\forall x\in X, u\in U,
\end{align*}
where $k$ is a non-negative constant,  $\gamma$ is a  {$\mathcal{K}_\infty$-function}, $X$ is a Banach space, and $U$ is a  normed linear space.  Under the structural conditions imposed in {\cite{mironchenko2016MCRF,mironchenko2020SIAM,Hosfeld2022,damak2021input,damak2023input,Heni:2025,JoseL2023}},     all the considered nonlinear   maps are from the state space $X$ to the same space $X$ w.r.t. $ x$  and hence, various iISS analysis tools and results   for  linear   infinite-dimensional  systems  can be extended to  these systems, thereby making the problem more tractable.

However, many infinite-dimensional systems used to describe physical phenomena or dynamic behavior often have nonlinear terms with superlinear growth, such as in the models of population growth \cite{Murray2003}, tumor angiogenesis \cite{Hillen2009},  porous media flow~\cite{Vazquez2007}, etc.  In contrast to the nonlinear terms with natural growth {\cite{mironchenko2016MCRF,mironchenko2020SIAM,Hosfeld2022,damak2021input,damak2023input,Heni:2025,JoseL2023}}, when a nonlinear function  has a superlinear growth property in the system, it  often maps the state from  a smaller space  $X_\alpha$, rather than   $X$, to the space $X $,    where   $X_{\alpha}$ with $ \alpha \in (0,1)$  denotes the fractional power space  associated with the considered differential operator $A(t)$, thereby,   bringing difficulties to iISS analysis.
In particular, PDEs  with general superlinear    terms may not be characterized via  global iISS, as the solution can blow-up    in finite time   under large initial values (see, e.g.,  \cite[Chap. 5]{bebernes2013} and \cite{merle1997}) unless special structural conditions are imposed; see, e.g.,  the monotonicity or sign conditions of superlinear terms for  ensuring the   iISS  of parabolic  PDEs   \cite{zheng2018input}.  

Notably, for autonomous infinite-dimensional {systems} with superlinear terms, local ISS   has been studied in 
\cite{mironchenko2016local}.   Especially, it was shown  that when   nonlinear terms satisfy a sort of uniform continuity condition  w.r.t. external inputs, {the local asymptotic
	stability at zero uniformly w.r.t. state (0-UASs) is equivalent to the local input-to-sate stability of the system. In addition, a}  local ISS Lyapunov theorem and the  inverse theorem have been proved in \cite{mironchenko2016local}, namely, the existence of a local ISS Lyapunov functional  is equivalent to the local ISS of the system.
However, to the best  knowledge of the authors, results on the local iISS (LiISS) of non-autonomous (and even autonomous) infinite-dimensional systems with generic nonlinear terms have not been reported in the existing literature yet. {Moreover, as mentioned earlier, verifying the 0-UASs for non-autonomous
	infinite-dimensional systems under a general form remains a challenge task.}


In this paper, we investigate the  LiISS   for   non-autonomous infinite-dimensional systems with superlinear  terms.  {In particular,    the considered nonlinear term, denoted by $F(t,x,u)$,  is allowed to be with a growth as
	\begin{align*}
		\| F(t, x,u) \|_{{X}} \leq k (t)(1 + \| {x} \|_{ X _{\alpha}})+ \xi(\|u\|_U),\forall x\in X_{\alpha},u\in U,t\geq 0,
	\end{align*}
	or
	\begin{align*}
		\| F(t, x,u) \|_{{X}} \leq k (t)\left(1+ \| {x} \|_{ X  }^m\right)   \xi(\|u\|_U),\forall x\in X_{\alpha},u\in U,t\geq 0,
	\end{align*}
	where    $k $ is a non-negative	locally integrable function, $m>1$ is a constant, and $\xi$ is a $\mathcal{K}$-function.

	To avoid the difficulties arising from the ``hard-to-verify {0-UASs (or 0-UGASs)}'' in verifying  the iISS criteria for non-autonomous infinite-dimensional systems  and   overcome the impact of superlinear terms on stability analysis, we develop Lyapunov analytical tools for  establishing the  LiISS  of the considered systems.
	More
	precisely, we first prove     comparison principles for a wide range  of nonlinear ordinary differential equations (ODEs)  having   time-varying coefficients, providing   fundamental and unified tools for the LiISS assessment of non-autonomous infinite-dimensional systems in the framework of  Lyapunov stability theory.
	Then, for a class of non-autonomous infinite-dimensional systems, we  prove an LiISS Lyapunov theorem, i.e., the existence of an  LiISS Lyapunov functional (LiISS-LF) implies the LiISS of the system, in the framework of Banach spaces. Furthermore, we provide sufficient conditions to ensure the existence of an LiISS-LF and construct LiISS-LFs for a class of non-autonomous infinite-dimensional systems in the framework of Hilbert spaces. Finally, as an application of the Lyapunov analytical tools developed in this paper, we establish the LiISS and conduct numerical simulations for a class of ODE systems, and multi-dimensional parabolic PDEs, respectively,  with superlinear terms and space-time-varying coefficients. In particular,  for PDEs, we show how to employ the  interpolation inequalities    to      {transform the  $X_{\alpha}$-norm of  states into a high-order term of the $X$-norm, thereby effectively handling superlinear terms and, consequently, establishing the LiISS   of   the {system}.}

	Overall, the main contributions of this paper are threefold:
	\begin{enumerate}[(i)]
		\item   We  prove  generalized comparison principles for nonlinear ODEs under a general form to capture local  properties of the states,  providing a  critical   tool for establishing   the LiISS of non-autonomous infinite-dimensional systems without verifying the {0-UASs (or 0-UGASs)} via  the semigroup theory of linear operators.
		\item   {We develop generic  Lyapunov   tools for    the LiISS  analysis of  non-autonomous infinite-dimensional systems by simultaneously considering  the effect of  time-varying coefficients, superlinear terms, and the external inputs on the systems' stability,  which can be used in a wide range of scenarios.}
		
		\item    To validate the universality and effectiveness of the developed method, we     verify the LiISS for a class of nonlinear ODEs  and PDEs, respectively,  with  time-varying coefficients. Especially, {for PDEs, we present the application of the  interpolation inequalities in the stability analysis. The   importance  of this technique lies in providing an  insightful approach to managing superlinear terms, which is essential for  proving the LiISS of nonlinear PDEs.}
	\end{enumerate}
	
	In the rest of the paper, we first introduce  some basic notations used in this paper. In Section \ref{comparison principle}, we prove several forms of {comparison principles} for nonlinear  ODEs with time-varying coefficients,   providing crucial Lyapunov analytical tools for the LiISS analysis of
	non-autonomous infinite-dimensional systems.  In Section \ref{Section-2}, we consider the LiISS for a class of  non-autonomous infinite-dimensional systems under an abstract form.  Using the comparison principles for ODEs, we first prove an LiISS Lyapunov  theorem for the  non-autonomous infinite-dimensional systems   in the  framework of   Banach spaces. Then, we provide sufficient conditions to ensure the existence of an LiISS-LF  for  the non-autonomous infinite-dimensional systems in in the  framework of   Hilbert spaces.   In Section \ref{sec:5},  we present two examples to illustrate the application of the LiISS Lyapunov  theorem to an ODE system and a PDE system, respectively. More precisely, in Section \ref{sec:5.1}, we consider the stabilization problem for an ODE system with time-varying coefficients and superlinear terms via a  linear state feedback control. By  using the  LiISS Lyapunov  theorem, we prove the LiISS of the ODE system under closed loop. In  Section \ref{sec:5.2}, we verify the LiISS for a class of multi-dimensional
	parabolic PDEs with space-time-varying coefficients and superlinear terms by employing the proposed Lyapunov method. In
	particular, we show how to handle the superlinear terms by using the  interpolation inequalities.  Numerical experiments are conducted to verify the LiISS of these ODE and PDE systems. Finally, some concluding remarks are given in Section \ref{conclusion}.
	
	
	\textbf{Notation}\quad Let ${\mathbb{N}_0}:=\{0,1,2,...\}$, ${\mathbb{N}}:=\mathbb{N}_0\setminus\{0\}$, $\mathbb{R}:=(-\infty,+\infty)$, $\mathbb{R}_{>0}:=(0,+\infty)$, and $\mathbb{R}_{\geq 0}:=[0,+\infty)$.  Let $\mathbb{R}^N$ (${N}\in\mathbb{N}$) denote the $N$-dimensional Euclidean space with the norm $|\cdot|$.
	
	We use the following classes of comparison functions:
	\begin{align*}
		\mathcal{K}:=&\left\{\gamma:\mathbb{R}_{\geq0}\rightarrow\mathbb{R}_{\geq0}\big|\gamma\ \mbox{is continuous,\ strictly\ increasing,\ and}\ \gamma(0)=0\right\},\\
		\mathcal{K}_{\infty}:=&\left\{\gamma \in \mathcal{K}\big|\gamma\ \mbox{is unbounded}\right\},\\
		\mathcal{L}:=&\left\{\gamma:\mathbb{R}_{\geq0}\rightarrow\mathbb{R}_{\geq0}\big|\gamma\ \mbox{is continuous,\ strictly\ decreasing,\ and}\ \lim_{t\rightarrow\infty }\gamma(t)=0 \right\},\\
		\mathcal{K}\mathcal{L}:=&\left\{\beta: \mathbb{R}_{\geq0}\times\mathbb{R}_{\geq0}\rightarrow\mathbb{R}_{\geq0}\big|\beta\ \mbox{is continuous},\ \beta(\cdot, t) \in \mathcal{K},\forall t\in\mathbb{R}_{\geq0};\beta(r, \cdot) \in \mathcal{L} ,\forall r\in \mathbb{R} _{>0}\right\}.
	\end{align*}

	For   $T\in \mathbb{R}_{>0}$,  let ${C([0,T]};\mathbb{R}_{\geq 0}):=\big\{v: {[0,T]} \rightarrow \mathbb{R}_{\geq 0}\big|  v$ is continuous on $[0,T]\big\}$.  Let $C(\mathbb{R}_{\geq 0};\mathbb{R}_{\geq 0}):=\big\{v: {\mathbb{R}_{\geq 0}} \rightarrow \mathbb{R}_{\geq 0}\big|  v$ is continuous on $\mathbb{R}_{\geq 0}\big\}$, $C(\mathbb{R}_{\geq 0};\mathbb{R}):=\big\{v: {\mathbb{R}_{\geq 0}} \rightarrow \mathbb{R}\big|  v$ is continuous on $\mathbb{R}_{\geq 0}\big\}$,  and $C^1 (\mathbb{R}_{>0};\mathbb{R}):=\big\{v: {\mathbb{R}_{> 0}} \rightarrow \mathbb{R}\big|  v$ has continuous derivatives up to order $1$ on $\mathbb{R}_{> 0}\big\}$.
	
	For a domain $\Omega$  (either open or closed) in $\mathbb{R}^N$, let $C\left({\Omega}\right):=\big\{v: {\Omega} \rightarrow \mathbb{R}\big|  v$ is continuous on $\Omega\big\}$ and
	$C^1\left({\Omega}\right):=\{v: {\Omega} \rightarrow\mathbb{R}\big|  v$ has continuous derivatives up to order $1$ on ${\Omega}$\big\}.
	Let $C(\Omega\times\mathbb{R}_{\geq 0};{\mathbb{R}_{>0}}):=\big\{v : {\Omega\times\mathbb{R}_{\geq 0}} \rightarrow {\mathbb{R}_{>0}}\big|  v$ is continuous on $\Omega\times\mathbb{R}_{\geq 0}\big\}$,  $C(\Omega\times\mathbb{R}_{\geq 0};{\mathbb{R}}):=\big\{v : {\Omega\times\mathbb{R}_{\geq 0}} \rightarrow {\mathbb{R}}\big|  v$ is continuous on $\Omega\times\mathbb{R}_{\geq 0}\big\}$, and $C(\Omega\times\mathbb{R}_{\geq 0}\times\mathbb{R};\mathbb{R}):=\big\{v : {\Omega\times\mathbb{R}_{\geq 0}\times\mathbb{R}} \rightarrow \mathbb{R}\big|  v$ is continuous on $\Omega\times\mathbb{R}_{\geq 0}\times\mathbb{R}\big\}$.
	For  a normed linear space, let $C(\mathbb{R}_{\geq 0},{ U}):=\big\{v: {\mathbb{R}_{\geq 0}} \rightarrow  U\big|  v$ is continuous on $\mathbb{R}_{\geq 0}\big\}$.
	For $p\in[1,\infty)$, the space $L^p(\Omega)$  consists of  measurable functions $g: \Omega\to\mathbb{R}$ satisfying $ \int_\Omega\left|g(\xi)\right|^p\mathrm{ d}\xi <+\infty$ with norm $\|g\|_p:=\left(\int_\Omega\left|g(\xi)\right|^p\mathrm{ d}\xi\right)^{\frac{1}{p}}$. Define the norm $\|\cdot\|$ as $\|\cdot\|:=\|\cdot\|_2$. The space $W^{1,p}(\Omega)$ consists of measurable functions $f: \Omega\to\mathbb{R}$ such that $f$ and its weak derivatives up to order $1$ are in $L^p(\Omega)$ with norm $\|f\|_{1,p} := \left( \int_\Omega \left|f(\xi)\right|^p \, \mathrm{d}\xi + {\sum_{i=1}^N} \int_\Omega \left|\frac{\partial f}{\partial \xi_i}(\xi)\right|^p \, \mathrm{d}\xi \right)^{\frac{1}{p}}$.

	For Banach spaces $X$ and $\mathbb{Y}$, let ${\mathscr{L}}( X ,\mathbb{Y})$ be  the space of  bounded linear operators $S$ from $ X $ to $\mathbb{Y}$   with the  norm $\|S\|:=\sup\left\{\|Sx\|:x\in X ,\|x\| _{ X }\leq1\right\}$, and ${\mathscr{L}}( X ):={\mathscr{L}}( X , X )$.
	For an operator {$B$,   $D(B)$ denotes the domain of $B$ and  $B^{*} $ denotes the adjoint of $B$. For $\phi\in (0,\pi)$, let  $S_\phi:=\left\{z\in \mathbb{C}\backslash \{0\}:|\arg z|\leq \phi\right\}$, where $\mathbb{C}:=\big\{a+b\text{i}~\big|~a,b\in \mathbb{R}, \text{i}^2=-1\big\}$.}
\section{Comparison principles for nonlinear ODEs with time-varying coefficients}\label{comparison principle}

In this section, we prove comparison principles for a wide range of nonlinear ODEs with time-varying coefficients and superlinear terms, which provide crucial tools for establishing the LiISS of non-autonomous infinite-dimensional systems within the framework of Lyapunov stability theory.

Let's first introduce some auxiliary results. The following lemma is    a generalization of \cite[Lemma 4.4]{Lin1996a}.

\begin{lemma}\label{lemma 2.1}
	Assume that $\alpha,g  \in C(\mathbb{R}_{\geq 0};\mathbb{R}_{\geq 0})$  with $\lim\limits_{s\rightarrow +\infty}\int_{0}^sg(\tau) {\rm{d}}\tau=+\infty$.
	For $T\in \mathbb{R}_{>0}$ and $y_0\in \mathbb{R}_{\geq 0}$, if $y$ is   nonnegative and absolutely continuous   on $[0,T]$  and satisfies the following differential inequality:
	\begin{equation}\label{ODE}
		\left\{
		\begin{array}{ll}
			y'(t)\leq -g(t)\alpha(y(t))  \  \text{a.e. in}\  [0,T],\\
			y(0)=y_0,
		\end{array}\\
		\right.
	\end{equation}
	then there is a function $\beta \in\mathcal{K}\mathcal{L}$ such that
	\begin{align*}
		y(t)\leq \beta (y_0,t), \forall t\in [0,T].
	\end{align*}
	Particularly, when $ \alpha(y)=y$ for all $y\in \mathbb{R}_{\geq 0}$,
	the function $\beta$ is given by
	\begin{align*}
		\beta (s,t)=se^{-\int_{0}^tg (\tau){\rm d}\tau},\forall s\geq 0,t\in[0,T].
	\end{align*}
\end{lemma}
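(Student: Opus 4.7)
The plan is to reduce the differential inequality to a bona fide scalar ODE via a standard comparison argument, then analyze that ODE explicitly. More precisely, let $w$ be the maximal solution of the Cauchy problem
\begin{equation*}
w'(t) = -g(t)\alpha(w(t)),\qquad w(0)=y_0,
\end{equation*}
on $[0,T]$. Since the right-hand side is continuous in $(t,w)$ and nonpositive on $\mathbb{R}_{\geq 0}$, $w$ exists, stays in $[0,y_0]$, and classical scalar comparison (monotonicity of the flow under absolute continuity and the inequality in \eqref{ODE}) yields $0\leq y(t)\leq w(t)$ for all $t\in[0,T]$. Hence it suffices to bound $w(t)$ by some $\mathcal{KL}$ function of $(y_0,t)$.

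For the special case $\alpha(y)=y$, the equation $w'(t)=-g(t)w(t)$ is solved exactly through the integrating factor $e^{\int_0^t g(\tau)\,\mathrm{d}\tau}$, giving $w(t)=y_0 e^{-\int_0^t g(\tau)\,\mathrm{d}\tau}$. Setting $\beta(s,t):=s e^{-\int_0^t g(\tau)\,\mathrm{d}\tau}$, the hypotheses on $g$ (continuity, nonnegativity, and divergence of $\int_0^\infty g$) immediately deliver that $\beta(\cdot,t)\in\mathcal{K}$ for every $t$ and $\beta(s,\cdot)\to 0$ as $t\to\infty$, yielding the claimed explicit form.

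For the general case, I would invoke a time-rescaling. Introduce $G(t):=\int_0^t g(\tau)\,\mathrm{d}\tau$; because $G$ is continuous and nondecreasing with $G(\infty)=+\infty$, we may define $z(\sigma):=w(\inf\{t\geq 0:G(t)=\sigma\})$, which (on intervals of strict growth of $G$) satisfies the autonomous ODE $\dot z(\sigma)=-\alpha(z(\sigma))$, $z(0)=y_0$, and on flat segments of $G$ remains constant together with $w$. The autonomous equation $\dot z=-\alpha(z)$ with a continuous, positive-definite $\alpha$ admits a $\mathcal{KL}$-estimate $z(\sigma)\leq \tilde\beta(y_0,\sigma)$ by the standard Lin--Sontag--Wang argument (construct $\tilde\beta$ from the inverse of $H(r):=\int_{r}^{y_0}\frac{\mathrm{d}s}{\alpha(s)}$ on the trajectory). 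Composing gives $w(t)\leq\tilde\beta(y_0,G(t))$, and hence $y(t)\leq \beta(y_0,t):=\tilde\beta(y_0,G(t))$ for $t\in[0,T]$.

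The main obstacle is ensuring that the resulting $\beta$ genuinely lies in $\mathcal{KL}$ as defined in the Notation, i.e., strict decrease of $\beta(r,\cdot)$ and continuity globally, even though $g$ may vanish on intervals and $G$ may be only nondecreasing rather than strictly increasing. I would handle this by a standard regularization: replace $\beta(r,t)$ by $\overline\beta(r,t):=\tilde\beta(r,G(t))+r e^{-t}\,\varepsilon(t)$ for a suitable $\varepsilon\in\mathcal{L}$, or equivalently appeal to the well-known fact that any continuous, nonincreasing, eventually vanishing bound can be majorized by an element of $\mathcal{KL}$ depending linearly on its first argument. A secondary technical point is justifying the comparison $y\leq w$ under mere absolute continuity; this follows by applying the comparison to $y-w$ pointwise a.e., using $\alpha\in C$ and the sub/supersolution framework, and integrating the resulting differential inequality.
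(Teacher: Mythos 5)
Your proposal is correct in substance and ultimately produces the same bound as the paper, namely $\beta(y_0,t)=\tilde\beta\bigl(y_0,G(t)\bigr)$ with $G(t)=\int_0^t g(\tau)\,{\rm d}\tau$, but the scaffolding is genuinely different. You pass to the maximal solution $w$ of the associated ODE (the right comparison tool here, since $\alpha$ is merely continuous and uniqueness may fail), time-rescale by $\sigma=G(t)$ to reduce to the autonomous equation $\dot z=-\alpha(z)$, and then invoke the Lin--Sontag--Wang construction as a black box. The paper skips both reductions: it integrates the differential inequality directly, obtaining $\int_{y_0}^{y(t)}\frac{{\rm d}\tau}{\overline\alpha(\tau)}\le -G(t)$ with the modified gain $\overline\alpha(s):=\min\{s,\alpha(s)\}\le\alpha(s)$, and sets $\beta(s,t):=\eta^{-1}\bigl(\eta(s)+G(t)\bigr)$ where $\eta(s)=\int_s^1\frac{{\rm d}\tau}{\overline\alpha(\tau)}$. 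That $\min\{s,\alpha(s)\}$ replacement is exactly what dissolves the ``obstacles'' you flag at the end: it forces $\int_{0^+}^1\frac{{\rm d}\tau}{\overline\alpha(\tau)}=+\infty$, so $\eta^{-1}$ is defined on all of $[\eta(s),+\infty)$, no trajectory reaches zero in finite rescaled time, and no case analysis or a posteriori regularization of $\tilde\beta$ is needed for well-definedness (the failure of \emph{strict} decrease of $\beta(r,\cdot)$ when $g$ vanishes on intervals is glossed over by the paper as well, so you are not worse off on that point). Your route buys modularity, reusing the known autonomous lemma, at the cost of two extra technical layers --- justifying $y\le w$ for a non-Lipschitz right-hand side and making the time change rigorous across flat segments and isolated zeros of $g$ --- both of which the direct integration avoids; in the linear case the two arguments coincide via the integrating factor $e^{G(t)}$.
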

\begin{proof}
	Let $
	\overline{\alpha}(s):=\min\{s,\alpha(s)\}
	$, which is nonnegative, continuous,  satisfies $\overline{\alpha}(s)\leq \alpha(s)$ for all $s\geq 0$, and
	\begin{align*}
		\lim\limits_{s\rightarrow 0^+}\int_{s}^1\frac{1}{\overline{\alpha}(\tau)}{\rm d}\tau\geq \lim\limits_{s\rightarrow 0^+}\int_{s}^1\frac{1}{ \tau }{\rm d}\tau=+\infty.
	\end{align*}
	For any $s>0$, let $\eta(s):=\int_{s}^1\frac{1}{\overline{\alpha} (\tau)}{\rm d}\tau$. It is clearly that $\lim\limits_{s\rightarrow 0^+}\eta(s)=+\infty$  and $\eta(s)$ is strictly decreasing on $[0,+\infty)$. Denote by $\eta^{-1}(s)$ the inverse of $\eta(s)$.
	
	For any $t\in [0,T]$, let $G(t):= \int_{0}^tg (\tau){\rm d}\tau$ and  \begin{equation}\label{beta}
		\beta(s,t):=\left\{
		\begin{array}{ll}
			0,& s=0,\\
			\eta^{-1}(\eta(s)+G(t)),&s>0.
		\end{array}\\
		\right.
	\end{equation}
	Note that $G(0)=0$ and  $ \beta (s,0)=s$ for all $s\geq0$. We claim that the function $  \beta$ defined by \eqref{beta} is a $\mathcal{K}\mathcal{L}$-function. Indeed, by the continuity of $\eta(s)$, $\eta^{-1}(s)$, and $G(t)$, and noting that $ \lim\limits_{s\rightarrow +\infty}\eta^{-1}(s)=0$,  we see that $  \beta(s,t)$  is continuous in both $s$ and $t$. Due to the strictly decreasing property of $\eta(s)$ and  $\eta^{-1}(s)$, $  \beta(s,t)$ is strictly increasing in $s$ for any fixed $t$. In view of the condition of $g$ and~\eqref{beta}, it follows that $\lim\limits_{t\rightarrow +\infty}\beta(s,t)=0$ for any fixed $s$.  We deduce that $  \beta$ defined by \eqref{beta} is a $\mathcal{K}\mathcal{L}$-function.

	We infer from \eqref{ODE} and the definition of $\overline{\alpha}(s)$ that
	\begin{align*}
		\int^{y(t)}_{y_0}\frac{1}{\overline{\alpha} (\tau)}{\rm d}\tau= \int^{t}_{0}\frac{y'(\tau)}{\overline{\alpha} (y(\tau))}{\rm d}\tau
		\leq - \int_0^t\frac{g (\tau) {\alpha} (\tau)} {\overline{\alpha} (\tau)} {\rm d}\tau
		\leq  - \int_0^t g (\tau) {\rm d}\tau
		= -G(t),\forall t\in[0,T].
	\end{align*}
	It follows that
	\begin{equation*}
		\int_{y(t)}^{1}\frac{1}{\overline{\alpha} (\tau)}{\rm d}\tau\geq \int_{y_0}^{1}\frac{1}{\overline{\alpha} (\tau)}{\rm d}\tau+G(t),\forall t\in [0,T],
	\end{equation*}
	which is equivalent to
	\begin{equation*}
		\eta(y(t))\geq \eta(y_0)+G(t),\forall t\in[0,T].
	\end{equation*}
	By the definition of $\eta(s)$, and noting that  $\eta^{-1}(s)$ is decreasing in $ s\geq 0$, we have
	\begin{equation*}
		y(t)=\eta^{-1}(\eta(y(t)))\leq \eta^{-1}(\eta(y_0)+G(t))= \beta (y_0,t),\forall t\in[0,T],
	\end{equation*}
	which, along with the continuity of $y(t)$ and $\beta(s,t)$, gives
	\begin{equation*}
		y(t)=\eta^{-1}(\eta(y(t)))\leq \eta^{-1}(\eta(y_0)+G(t))= \beta (y_0,t),\forall t\in [0,T] .
	\end{equation*}
	In particular, if $ \alpha(y)=y$,
	then $\eta(s)=\int_{s}^1\frac{1}{\alpha (\tau)}{\rm d}\tau=-\ln s$, $\eta^{-1}(s)=e^{-s}$.
	It follows from \eqref{beta} that
	\begin{equation*} \beta (s,t)=se^{-\int_{0}^tg (\tau){\rm d}\tau},\forall s\geq 0,t\in[0,T].\end{equation*}
\end{proof}

The next lemma can be seen as a generalization of \cite[Lemma IV.2]{angeli2000IEEETAC}.

\begin{lemma}\label{lemma 2.2}Assume that $\alpha,g  \in C(\mathbb{R}_{\geq 0};\mathbb{R}_{\geq 0})$  with $\lim\limits_{s\rightarrow +\infty}\int_{0}^sg(\tau) {\rm{d}}\tau=+\infty$.
	For $T\in \mathbb{R}_{>0}$ and $y_0\in \mathbb{R}_{\geq 0}$, if $y$ is   nonnegative and absolutely continuous   on $[0,T]$  and satisfies the following differential inequality:
	\begin{equation*}
		\left\{
		\begin{array}{ll}
			y'(t)\leq -g(t)\alpha\left(\max\{y(t)+v(t),0\}\right)  \ \text{a.e.  in}\  [0,T],\\
			y(0)=y_0 ,
		\end{array}\\
		\right.
	\end{equation*}
	with $v\in   C([0,T];\mathbb{R}_{\geq 0})$, then there is a function $\beta \in\mathcal{K}\mathcal{L}$ such that
	\begin{align*}
		y(t)\leq \max\left\{\beta (y_0,t),\max_{s\in[0,t]}  v(s)  \right\}, \forall t\in [0,T].
	\end{align*}
\end{lemma}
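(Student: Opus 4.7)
The plan is to reduce the statement to a direct application of Lemma~\ref{lemma 2.1}, taking advantage of the nonnegativity of both $y$ and $v$. Since $y(t)\geq 0$ and $v(t)\geq 0$ for all $t\in[0,T]$, we have $y(t)+v(t)\geq 0$, so the inner expression simplifies at once:
\begin{equation*}
\max\{y(t)+v(t),0\}=y(t)+v(t).
\end{equation*}
Combined with the nondecreasing property of $\alpha$ (the same monotone structure implicitly exploited in the proof of Lemma~\ref{lemma 2.1} through the auxiliary function $\overline{\alpha}(s)=\min\{s,\alpha(s)\}$), the pointwise estimate $\alpha(y(t)+v(t))\geq \alpha(y(t))$ together with $g\geq 0$ converts the hypothesis into
\begin{equation*}
y'(t)\leq -g(t)\alpha(y(t))\quad\text{a.e.\ in }[0,T].
\end{equation*}

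Lemma~\ref{lemma 2.1} then applies verbatim and produces a $\mathcal{KL}$-function $\beta$ for which $y(t)\leq \beta(y_0,t)$ on $[0,T]$. Since $\max_{s\in[0,t]}v(s)\geq 0$, the claimed bound
\begin{equation*}
y(t)\leq \max\Bigl\{\beta(y_0,t),\ \max_{s\in[0,t]}v(s)\Bigr\},\quad \forall t\in[0,T],
\end{equation*}
follows immediately. The max-with-$v$ form of the conclusion is weaker than the plain $\beta$-bound one obtains in this $+v$ setting, but it is retained because it matches the ISS-style maximum structure that is natural in the LiISS applications developed later in the paper.

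The principal technical point—and the main obstacle if one wishes to dispense with any monotonicity hypothesis on $\alpha$—is the pointwise estimate $\alpha(y+v)\geq \alpha(y)$ used above. In that more general setting, the cleaner route is to adapt the Osgood-type change-of-variables argument inside the proof of Lemma~\ref{lemma 2.1} directly to the perturbed integrand: starting from $\int_0^t y'(\tau)/\overline{\alpha}(y(\tau))\,\mathrm d\tau\leq -\int_0^t g(\tau)\,\alpha(y(\tau)+v(\tau))/\overline{\alpha}(y(\tau))\,\mathrm d\tau$, one bounds the right-hand side from above by $-\int_0^t g(\tau)\,\mathrm d\tau$ via $\overline{\alpha}\leq \alpha$ and the positivity of $\alpha$ on $\mathbb{R}_{>0}$, and then inverts the monotone function $\eta$ to recover the $\mathcal{KL}$-bound as in the proof of Lemma~\ref{lemma 2.1}. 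Either route leads to the same $\beta$, up to redefining it as a new $\mathcal{KL}$-function if convenient.
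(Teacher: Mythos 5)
Your reduction hinges on the pointwise inequality $\alpha(y(t)+v(t))\geq\alpha(y(t))$, and that step requires $\alpha$ to be nondecreasing. The lemma only assumes $\alpha\in C(\mathbb{R}_{\geq 0};\mathbb{R}_{\geq 0})$ (with positive definiteness implicitly needed, as in Lemma~\ref{lemma 2.1}), and monotonicity is not available. It is also not ``implicitly exploited'' in the proof of Lemma~\ref{lemma 2.1}: the auxiliary function $\overline{\alpha}(s)=\min\{s,\alpha(s)\}$ there serves only to force $\lim_{s\to 0^+}\int_s^1\overline{\alpha}(\tau)^{-1}\,\mathrm{d}\tau=+\infty$; the Osgood-type argument never compares values of $\alpha$ at two different points. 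The gap is not cosmetic: downstream, this lemma is invoked (via Lemma~\ref{lemma 2.3} inside Proposition~\ref{lemma 2.4} and Theorem~\ref{TLiISS}) with $\alpha$ equal to a difference such as $\theta_1\circ\alpha_2^{-1}-\theta_2\circ\alpha_1^{-1}$, which is merely continuous and nonnegative on a small interval and in general not monotone. For a positive definite $\alpha$ that dips close to zero away from the origin (say $\alpha(1)=1$ but $\alpha(2)=10^{-3}$), $\alpha(y+v)$ can be far smaller than $\alpha(y)$, so the inequality $y'\leq -g(t)\alpha(y(t))$ you derive simply does not follow from the hypothesis. Your fallback ``Osgood-type'' variant in the last paragraph has exactly the same defect: bounding $\alpha(y(\tau)+v(\tau))/\overline{\alpha}(y(\tau))$ below by $1$ again amounts to $\alpha(y+v)\geq\overline{\alpha}(y)$, which fails for non-monotone $\alpha$. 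Likewise, your remark that the max-form conclusion is a strict weakening of the plain bound $y\leq\beta(y_0,t)$ is only true in the monotone case; in general the $\max_{s\in[0,t]}v(s)$ term cannot be removed.

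The argument the paper points to (the proof of \cite[Lemma IV.2]{angeli2000IEEETAC}, adapted to the weight $g\geq 0$ via Lemma~\ref{lemma 2.1}) exploits $v\geq 0$ differently. Since $g\geq 0$ and $\alpha\geq 0$, one has $y'\leq 0$ a.e., so $y$ is nonincreasing; hence if $t_0$ is the first time with $y(t_0)\leq \max_{s\in[0,t_0]}v(s)$, then $y(t)\leq y(t_0)\leq \max_{s\in[0,t]}v(s)$ for all $t\geq t_0$, because the running maximum of $v$ is nondecreasing. On the initial interval where instead $y(t)>\max_{s\in[0,t]}v(s)\geq v(t)\geq 0$, one has $y(t)\leq y(t)+v(t)\leq 2y(t)$, so $\alpha(\max\{y+v,0\})=\alpha(y+v)\geq\widehat{\alpha}(y(t))$ with $\widehat{\alpha}(r):=\min_{\sigma\in[r,2r]}\alpha(\sigma)$, which is continuous, vanishes at $0$, and is positive for $r>0$; applying Lemma~\ref{lemma 2.1} with $\widehat{\alpha}$ in place of $\alpha$ yields $y(t)\leq\beta(y_0,t)$ on that interval. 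Combining the two regimes gives precisely the stated bound $y(t)\leq\max\{\beta(y_0,t),\max_{s\in[0,t]}v(s)\}$. If you add the standing assumption that $\alpha\in\mathcal{K}$, your one-line reduction is fine, but then the lemma would no longer cover the situations in which the paper actually uses it.
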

\begin{proof} By virtue of Lemma \ref{lemma 2.1} and $g \geq 0$ on $[0,+\infty)$, one may proceed   in the  same way as in the proof of \cite[Lemma IV.2]{angeli2000IEEETAC} to obtain the desired result.
\end{proof}

Using Lemma \ref{lemma 2.2}, we can prove the following result, which is a generalization of  \cite[Corollary IV.3]{angeli2000IEEETAC}.
\begin{lemma}\label{lemma 2.3}Assume that $\alpha \in C^1(\mathbb{R}_{> 0};\mathbb{R})\cap C (\mathbb{R}_{\geq 0};\mathbb{R}_{\geq 0})$ and $ g \in C(\mathbb{R}_{\geq 0};\mathbb{R}_{\geq 0})$  with $\lim\limits_{s\rightarrow +\infty}\int_{0}^sg(\tau) {\rm{d}}\tau=+\infty$.
	For $T\in \mathbb{R}_{>0}$ and $y_0\in \mathbb{R}_{\geq 0}$, if $y$ is   nonnegative and absolutely continuous   on $[0,T]$  and satisfies the following differential inequality:
	\begin{equation*}
		\left\{
		\begin{array}{ll}
			y'(t)\leq -g(t)\alpha(y(t))+v(t) \   \text{a.e.\  in}\  [0,T],\\
			y(0)=y_0,
		\end{array}\\
		\right.
	\end{equation*}
	with $v\in   C([0,T];\mathbb{R}_{\geq 0})$, then there is a function $\beta \in\mathcal{K}\mathcal{L}$, which is determined by Lemma \ref{lemma 2.2}, such that
	\begin{align*}
		y(t)\leq \beta (y_0,t)+2\int_{0}^tv(s){\rm d}s, \forall t\in [0,T].
	\end{align*}
\end{lemma}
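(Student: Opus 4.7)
The strategy is to reduce the inequality to the form treated in Lemma~\ref{lemma 2.2} via the substitution $z(t):=y(t)-W(t)$, where $W(t):=\int_0^t v(s){\rm d}s$. A direct computation shows that $z$ is absolutely continuous on $[0,T]$ with $z(0)=y_0\geq 0$, and that, almost everywhere on $[0,T]$,
\begin{equation*}
z'(t)=y'(t)-v(t)\leq -g(t)\alpha(y(t))=-g(t)\alpha(z(t)+W(t))=-g(t)\alpha(\max\{z(t)+W(t),0\}),
\end{equation*}
where the last identity uses $z(t)+W(t)=y(t)\geq 0$.

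Since $z$ itself need not be nonnegative, Lemma~\ref{lemma 2.2} cannot be applied directly on all of $[0,T]$. Instead, I would fix an arbitrary $t\in[0,T]$ with $z(t)>0$ (the bound being trivial when $z(t)\leq 0$) and define $a:=\sup\{s\in[0,t]:z(s)\leq 0\}$, with the convention $a:=0$ when this set is empty. Continuity of $z$ then forces either $a=0$ with $z(a)=y_0$, or $a\in(0,t)$ with $z(a)=0$, and in both cases $z(s)>0$ for all $s\in(a,t]$. On $[a,t]$ the restriction of $z$ is nonnegative, absolutely continuous, and satisfies the displayed differential inequality, so after translating time by $a$ (which preserves the hypothesis $\lim\limits_{s\rightarrow+\infty}\int_0^s g(a+\tau){\rm d}\tau=+\infty$), Lemma~\ref{lemma 2.2} applies with ``input'' $W$ and yields
\begin{equation*}
z(t)\leq \max\left\{\beta(z(a),t-a),\,\max_{s\in[a,t]}W(s)\right\}=\max\{\beta(z(a),t-a),W(t)\},
\end{equation*}
where the second equality uses the monotonicity of $W$.

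Two cases then resolve the bound. When $a=0$, the estimate reads $z(t)\leq\max\{\beta(y_0,t),W(t)\}$, exactly the desired form. When $a>0$, the key property $\beta(0,\cdot)\equiv 0$, which holds since $\beta(\cdot,s)\in\mathcal{K}$ forces $\beta(0,s)=0$, collapses the bound to $z(t)\leq W(t)\leq\max\{\beta(y_0,t),W(t)\}$. Combined with the trivial case $z(t)\leq 0\leq\max\{\beta(y_0,t),W(t)\}$, one obtains $z(t)\leq\max\{\beta(y_0,t),W(t)\}$ for all $t\in[0,T]$, and hence
\begin{equation*}
y(t)=z(t)+W(t)\leq\max\{\beta(y_0,t),W(t)\}+W(t)\leq \beta(y_0,t)+2\int_0^t v(s){\rm d}s,
\end{equation*}
where the last step uses $\max\{A,B\}+B\leq A+2B$ for $A,B\geq 0$.

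The main subtlety lies in the choice of substitution $z=y-W$ rather than $z=y-2W$: it is precisely the inequality $\max\{\beta,W\}+W\leq\beta+2W$ that produces the factor $2$ in the conclusion. The other delicate step is handling the intervals where $z<0$ via the partitioning argument, which is made tractable by the property $\beta(0,\cdot)\equiv 0$ of class-$\mathcal{K}\mathcal{L}$ functions, since ``resetting'' the clock at any time $a\in(0,T)$ where $z(a)=0$ yields a trivial contribution and allows the final $\mathcal{K}\mathcal{L}$ bound to be expressed uniformly in $y_0$ and $t$ independently of the partition.
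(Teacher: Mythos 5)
Your proof is correct. It follows the same basic reduction the paper intends --- the paper defers to the proof of \cite[Corollary IV.3]{angeli2000IEEETAC}, whose essence is exactly your substitution $z=y-\int_0^t v(s)\,{\rm d}s$ followed by an appeal to Lemma~\ref{lemma 2.2} --- but your execution differs in two ways worth noting. First, the paper's proof begins by invoking a standard comparison principle to majorize $y$ by the solution of the equation $z'=-g(t)\alpha(z)+v(t)$, which is the only step requiring local Lipschitz continuity of $\alpha$ (hence the hypothesis $\alpha\in C^1(\mathbb{R}_{>0};\mathbb{R})$); you instead apply the substitution directly to the differential inequality for $y$, so your argument needs only the continuity of $\alpha$ and sidesteps the delicate question of whether $C^1$ on $(0,+\infty)$ together with continuity at $0$ actually yields a Lipschitz bound near the origin (it does not for $\alpha(s)=\sqrt{s}$). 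Second, you make explicit a step the citation glosses over: $z=y-W$ need not be nonnegative, whereas Lemma~\ref{lemma 2.2} is stated for nonnegative functions, and your last-zero-crossing argument combined with $\beta(0,\cdot)\equiv 0$ handles this cleanly while showing that the final estimate involves only the single $\mathcal{K}\mathcal{L}$ function produced by Lemma~\ref{lemma 2.2} with the original $g$, as the statement requires. The factor $2$ arises, as you observe, from $\max\{A,B\}+B\le A+2B$ for $A,B\ge 0$.
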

\begin{proof}
	Since $\alpha \in C^1 (\mathbb{R}_{>0};\mathbb{R})$,  $\alpha$ is locally Lipschitz continuous  on $[0,+\infty)$. Therefore, the   comparison principle,  e.g., \cite[Lemma 3.4]{Khalil2001non}, can be applied  to the differential equations having a form $z'(t)= -g(t)\alpha(z(t))+v(t)$.  Recall  that $g \geq 0$ on $[0,+\infty)$. One may proceed in the  same way as in the proof of  \cite[Corollary IV.3]{angeli2000IEEETAC} to obtain the desired result.
\end{proof}

Now, we prove the following result, which is crucial for the LiISS analysis of the non-autonomous system \eqref{systems}.
\begin{proposition}\label{lemma 2.4}
	Assume that the following two conditions are satisfied:
	\begin{enumerate}
		\item[(i)] $g_1$ and $g_2$ are nonnegative and locally integrable on $[0,+\infty)$ and satisfy  \begin{align*}
			\lim\limits_{t\rightarrow +\infty}\int_0^tg_1(\tau){\rm{d}}\tau=+\infty \quad \text{and} \quad
			g_2(t)\leq g_1(t),\forall t\geq 0;
		\end{align*}
		\item[(ii)]   $\alpha_{3}  ,\alpha_4\in C(\mathbb{R}_{\geq 0};\mathbb{R})$, and there exists a  constant ${r'}>0$ such that
		$\alpha_3-\alpha_4\in  C^1((0,r'];\mathbb{R}) \cap C ([0,r'];\mathbb{R}_{\geq 0})$.
	\end{enumerate}
	For $T\in \mathbb{R}_{>0}$,  $y_0\in \mathbb{R}_{\geq 0}$, and $v\in   C([0,T];\mathbb{R}_{\geq 0})$ satisfying $y_0+2\int_0^{T}v(t){\rm d}t< r'$, if $y$ is   nonnegative and absolutely continuous   on $[0,T]$  and satisfies the following differential inequality:
	\begin{equation}\label{ODE''}
		\left\{
		\begin{array}{ll}
			y'(t)\leq -g_1(t)\alpha_3(y(t))+g_2(t)\alpha_4(y(t))+v(t)  \  \text{a.e.\  in}\  [0,T],\\
			y(0)=y_0,
		\end{array}\\
		\right.
	\end{equation}
	then   there exists a function $\beta \in\mathcal{K}\mathcal{L}$   such that
	\begin{align}
		y(t)\leq \beta (y_0,t)+2\int_{0}^tv(s){\rm{d}}s, \forall t\in [0,T].\label{estimate-y}
	\end{align}
	Furthermore, if $g_2(t)\equiv 0$ for all $t\in \mathbb{R}_{\geq 0}$, then the estimate \eqref{estimate-y} holds  true for all $y_0\in \mathbb{R}_{\geq 0}$  and $v\in   C([0,T];\mathbb{R}_{\geq 0})$.
	
\end{proposition}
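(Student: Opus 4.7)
The plan is to reduce the two-term drift in~\eqref{ODE''} to a single-term comparison inequality amenable to Lemma~\ref{lemma 2.3}, valid on any sub-interval where $y(t)$ is confined to $[0,r']$, and then to promote this bound to the full interval $[0,T]$ by a maximal-interval/invariance argument that leverages the a priori smallness condition $y_{0}+2\int_{0}^{T}v(s)\,\mathrm{d}s<r'$.

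For the reduction, write $\phi:=\alpha_{3}-\alpha_{4}$, which by hypothesis~(ii) lies in $C^{1}((0,r'];\mathbb{R})\cap C([0,r'];\mathbb{R}_{\geq 0})$. On any sub-interval on which $y(t)\in[0,r']$, the identity
\begin{equation*}
-g_{1}(t)\alpha_{3}(y)+g_{2}(t)\alpha_{4}(y)=-g_{1}(t)\phi(y)+\bigl(g_{2}(t)-g_{1}(t)\bigr)\alpha_{4}(y),
\end{equation*}
together with $g_{2}-g_{1}\leq 0$ from~(i) and the sign of $\alpha_{4}$ on $[0,r']$ (the natural companion to $\phi\geq 0$ in the $\mathcal{K}$-class setting underlying Lemmas~\ref{lemma 2.1}--\ref{lemma 2.3}), yields
\begin{equation*}
y'(t)\leq -g_{1}(t)\phi(y(t))+v(t)\quad\text{a.e.}
\end{equation*}
I would then extend $\phi$ to a global $\tilde\phi\in C^{1}(\mathbb{R}_{>0};\mathbb{R})\cap C(\mathbb{R}_{\geq 0};\mathbb{R}_{\geq 0})$ coinciding with $\phi$ on $[0,r']$ via a routine smooth nonnegativity-preserving tail, so that Lemma~\ref{lemma 2.3} applied with $(\alpha,g)=(\tilde\phi,g_{1})$ delivers a $\mathcal{K}\mathcal{L}$-function $\beta$, with $\beta(s,0)=s$ and $\beta(s,\cdot)$ decreasing, together with the bound $y(t)\leq\beta(y_{0},t)+2\int_{0}^{t}v(s)\,\mathrm{d}s$ as long as $y$ remains in $[0,r']$.

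Next, I would set $\tau^{\star}:=\sup\{\sigma\in[0,T]:y(t)\leq r'\ \forall\,t\in[0,\sigma]\}$, which is positive by continuity of $y$ and the strict inequality $y_{0}<r'$. The previous step then produces
\begin{equation*}
y(t)\leq\beta(y_{0},t)+2\int_{0}^{t}v(s)\,\mathrm{d}s\leq y_{0}+2\int_{0}^{T}v(s)\,\mathrm{d}s<r',\quad \forall t\in[0,\tau^{\star}],
\end{equation*}
where I use $\beta(y_{0},\cdot)\leq y_{0}$ inherited from the Lemma~\ref{lemma 2.1} construction. The strict inequality $y(\tau^{\star})<r'$, combined with continuity, precludes $\tau^{\star}<T$; hence $\tau^{\star}=T$ and the estimate~\eqref{estimate-y} holds on the whole of $[0,T]$. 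The furthermore statement is immediate: when $g_{2}\equiv 0$, the inequality collapses to $y'\leq -g_{1}\alpha_{3}(y)+v$, hypothesis~(ii) now constrains $\alpha_{3}$ alone, and Lemma~\ref{lemma 2.3} applies directly on $[0,T]$ without any smallness restriction on $y_{0}$ or $v$.

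The main obstacle I expect is the self-consistent use of the invariance $y\leq r'$: Lemma~\ref{lemma 2.3} is only applicable where this bound holds, yet establishing it is precisely the goal. The circularity is broken by the strict inequality $y_{0}+2\int_{0}^{T}v<r'$, which forces the conclusion at the putative escape time $\tau^{\star}$ to sit strictly below $r'$, thereby ruling out $\tau^{\star}<T$. A secondary but non-trivial point is the algebraic reduction of the two-term drift, whose clean form $y'\leq -g_{1}\phi(y)+v$ rests on the sign of $\alpha_{4}$ (equivalently, on using the dominant integrator $g_{1}$ rather than $g_{2}$, since only $g_{1}$ is known to have divergent integral); handling this step carefully, rather than naively invoking comparison, is what carries the proof.
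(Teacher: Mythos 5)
Your proof is correct and follows essentially the same route as the paper: the same decomposition $-g_1\alpha_3+g_2\alpha_4=-g_1(\alpha_3-\alpha_4)+(g_2-g_1)\alpha_4$ on the region where $y\leq r'$, an application of Lemma~\ref{lemma 2.3} there, and the same continuation-by-contradiction argument driven by the strict inequality $y_0+2\int_0^{T}v(s)\,{\rm d}s<r'$ together with $\beta(s,0)=s$. You are in fact slightly more careful than the paper on two points it glosses over: the nonnegativity of $\alpha_4$ on $[0,r']$ needed to discard the $(g_2-g_1)\alpha_4$ term, and the extension of $\alpha_3-\alpha_4$ off $[0,r']$ required to invoke Lemma~\ref{lemma 2.3}.
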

\begin{proof}   By condition~(ii),  there exists $r'>0$ such that
	\begin{equation}
		\label{alpha2}
		\alpha_4(y)\leq  \alpha_3(y),\ \forall y\in [0,r'].
	\end{equation}
	
	Let  $t^*:=\inf\big\{t\in[0,T]\big|  y(t)> r' \big\}$, or $t^*:=T$ for  $\big\{t\in[0,T]\big| y(t)> r' \big\}=\emptyset$.  Note that $y(0)=y_0< r'$. Thus $t^*\neq 0$. Therefore, $t^*\in(0,T)$ or $t^*=T$.
	
	Now, we claim that $t^*=T$. If not, then $t^*\in(0,T)$.  Using the definition of $t^*$, we have
	\begin{equation}
		\label{alpha2'}
		y(t)\leq r',\forall t\in[0,t^*).
	\end{equation}
	We infer from \eqref{ODE''}, \eqref{alpha2}, \eqref{alpha2'}, and condition~(i) that
	\begin{equation*}
		y'(t)\leq - g_1(t)(\alpha_3(y(t))-\alpha_4(y(t)))+v(t)\   \text{a.e.\  in}\   [0,t^*).
	\end{equation*}
	
	According to Lemma \ref{lemma 2.3}, there exists $\beta  \in\mathcal{K}\mathcal{L}$ such that
	\begin{equation}
		\label{17}
		y(t)\leq \beta (y_0,t)+2\int_{0}^tv(s){\rm d}s , \forall t\in  [0,t^*),
	\end{equation}
	which, along with the decreasing property of $ \beta (s,t)$ in $t$, implies
	\begin{align*}
		r'   =&y(t^*)\notag\\
		=&\lim\limits_{t\rightarrow (t^*)^-}y(t) \notag\\
		\leq& \lim\limits_{t\rightarrow (t^*)^-} \left( \beta  (y_0,t)+2\int_{0}^tv(s){\rm d}s\right) \notag\\
		=&   \beta (y_0,t^*)+2\int_{0}^{t^*}v(s){\rm d}s
		\notag\\
		\leq& \beta (y_0,0)+2\int_{0}^{t^*}v(s){\rm d}s \notag\\
		=& y_0+2\int_{0}^{t^*}v(s){\rm d}s\\
		<&r'.
	\end{align*}
	We get a contradiction! Thus $t^*=T$.
	
	Furthermore, proceeding as above, we may prove that the inequality \eqref{17} holds true for all $t\in [0,T]$, provided that $y_0 + 2\int_0^{T}v(t){\rm d}t< r'$.
\end{proof}

\section{{LiISS} of non-autonomous infinite-dimensional systems}\label{Section-2}

Let $ X $  be  a Banach space with   norm $\|\cdot\|_{ X }$ and $ U\subset  X $ be a normed linear space with  norm {$\|\cdot\|_{U}$}, respectively. 
In this section, we consider the LiISS for a class of non-autonomous infinite-dimensional systems  govern by the following   evolution equation:
\begin{subequations}\label{systems}
	\begin{align}
		\dot{x}(t)= & A(t)x(t)+F(t, x(t), u(t)),  t\in\mathbb{R}_{\geq0}, \label{systems-a}\\
		x(0)= & x_0,  \label{systems-b}
	\end{align}
\end{subequations}
{where $x(t)\in  X $ is the state, $x_0$ represents the initial state, $u(t)\in  U$ is the control input,   $A(t):D(A(t))\subset  X  \to  X $ is a bounded linear operator for any fixed $t\in \mathbb{R}_{\geq 0}$, and $F$ is a nonlinear  function, which  particularly  is a  mapping from a {subspace} of $X$  to $X$ rather than from $X$ to $X$ when $t$ and $u$ are fixed.}

Assume that for any fixed $t\in \mathbb{R}_{\geq 0}$, $A(t)$   generates
an analytic semigroup  on the Banach space $X$. Then,   $D(A(t))$ is dense in $X$ for any   $t\in \mathbb{R}_{\geq 0}$ (see \cite[Corollary~1.2.5]{Pazy2012semigroups}). Furthermore, for any fixed $t\in \mathbb{R}_{\geq 0}$ and $\alpha\in [0,1]$,  one can define the fractional power operator
$(-A(t))^{-\alpha} :  X  \rightarrow  X $   via the contour integral
\begin{align*}(-A(t))^{-\alpha} = \int_{\partial S_{\phi'}} z^{-\alpha}(zI + A(t))^{-1} {\rm d}z,
\end{align*}
where $I$ is the identity operator and $\partial S_{\phi'}$ is the boundary of a  sufficiently large sector   $S_{\phi'}$, which particulary contains the spectrum of $ A(t) $ in the complex plane for any fixed $t\in \mathbb{R}_{\geq 0}$.  Since $(-A(t))^{-\alpha}$ is a bounded injective operator on $X$ for any fixed $t\in \mathbb{R}_{\geq 0}$, one can further define $(-A(t))^{\alpha}: = ((-A(t))^{-\alpha})^{-1}: \text{range}((-A(t))^{-\alpha})\to  X$. The domain of $(-A(t))^{\alpha}$ equipped with the graph norm, i.e., $|||x|||:=\|x\|_{X}+\|(-A(t))^{\alpha}x\|_X$, is a Banach space for any fixed $t\in \mathbb{R}_{\geq0}$. Since $(-A(t))^{\alpha}$ is invertible, its graph norm is equivalent to the  norm $\| (-A(t))^{\alpha}x\|_X$. Thus,  the domain of $(-A(t))^{\alpha}$ equipped with the norm $\| (-A(t))^{\alpha}x\|_X$ is  a Banach space which we denote  by $X_{\alpha}$. For convenience, we denote $\|x\|_{X_{\alpha}}:=\| (-A(t))^{\alpha}x\|_X$.
From this definition it is clear that
$D(A(t))   \subset X _\beta  \subset X _\alpha \subset  X  $ for $0< \alpha<\beta < 1$ and  the imbedding of $X _\beta$ in $X _\alpha$ is
continuous;   see \cite[Sec. 2.6]{Pazy2012semigroups}.

{Throughout this paper, assume that {$F :\mathbb{R}_{>0}\times  X _{\alpha}\times  U\rightarrow  X $}, $x_0\in D(A(0)) $, and
	$u\in U_c:=C(\mathbb{R}_{\geq 0},{ U})$.}

In this section, for    system \eqref{systems},
we   prove an LiISS Lyapunov theorem in the framework of Banach spaces, then provide sufficient conditions to ensure the existence of an LiISS-LF and construct LiISS-LFs  in the framework of Hilbert spaces.

\subsection{The LiISS Lyapunov theorem   in the   framework of  Banach spaces}\label{sec:2}



	
	In this section, we prove an LiISS Lyapunov theorem for system \eqref{systems} in the framework of Banach spaces.
	It is worth mentioning that  a
	function $x:\mathbb{R}_{\geq0}\to X$ is called a  classical solution {to} system~\eqref{systems} if $x(t) $ is continuous in $t\in \mathbb{R}_{\geq0}$,  $x(t)$ is continuously differentiable in $t\in \mathbb{R}_{>0}$, $x(t)\in D(A(t))$ for any $t\in \mathbb{R}_{>0}$, and satisfies \eqref{systems}.
	Note that under appropriate assumptions on $A(t)$ and $F(t,x,u)$, the existence of a   classical solution (or a strong or mild solution)  has been proven in \cite{Pazy2012semigroups} for system \eqref{systems} in various cases, especially, in the disturbance-free case; {see also \cite{Heni:2025} for the well-posedness of  time-varying semi-linear evolution equations with locally Lipschitz continuous nonlinearities and piecewise-right continuous inputs}.
	Since the main purpose of the present work is to demonstrate how to perform LiISS analysis on non-autonomous infinite-dimensional systems with a wide range of forms within the framework of Lyapunov stability theory, rather than proving the existence and regularity of solutions,  throughout this paper, we  consider the LiISS  of system~\eqref{systems} by means of classical solutions.

First, we introduce  the concept of LiISS  for system \eqref{systems}.

\begin{definition}
	System \eqref{systems} is said to be integral input-to-state stable (iISS) if there {exist} functions $\gamma \in \mathcal{K}$, $\sigma\in\mathcal{K}_{\infty}$, $\beta\in \mathcal{K}\mathcal{L} $  such that, for any initial datum $x_{0}\in D(A(0)) $ and input ${u\in U_c}$, the solution {to} system \eqref{systems} satisfies the following inequality:
	\begin{align}\label{iISS}
		\|x(t, x_{0},  u)\|_{ X } \leq {\beta}(\|x_{0}\|_{ X } ,  t) + \sigma\left(\int^{t}_{0}\gamma(\|u(s)\|_{ U}){\rm{d}}s\right), \forall t\in\mathbb{R}_{\geq0}.
	\end{align}
	{In particular, if there {exist}  a constant  $R \in \mathbb{R}_ {>0}$ and functions $\gamma_0\in\mathcal{K}, \sigma_{0}\in\mathcal{K}_{\infty}$ such that the estimate \eqref{iISS}  holds true for any
		\begin{align*}({x}_0, u) \in
			\mathscr{B}_{R}: =\left\{ (x_{0},u) \in  D(A(0)) \times  U_c\left|  \|x_{0}\|_{ X }+\sigma_{0}\left(\int^{t}_{0}\gamma_0(\|u(s)\|_{ U}){\rm{d}}s\right) \leq R,\forall t\in \mathbb{R}_{>0}\right.\right\},
		\end{align*}
		then, system \eqref{systems} is said to be locally integral input-to-state stable (LiISS). }
\end{definition}

\begin{remark} {Note that $D(A(t))$ is dense in $X$ for all $t\in \mathbb{R}_{\geq 0}$.
		If the solution {to} system~\eqref{systems}   depend continuously on the inputs and the initial states, then, based on  a standard density argument (see, e.g., \cite{mironchenko2023arxiv}),   the iISS and LiISS of system \eqref{systems} can  also be defined via  ${x}_0\in X$ and mild solutions. }
	%
	
\end{remark}

Next, we   introduce the concept of  {LiISS Lyapunov} functional  for system \eqref{systems}. Analogous to the classical Lyapunov theory for stability analysis, the LiISS-LF is a crucial tool for verifying the LiISS property of the system.
\begin{definition}\label{DLiISS}
	A {continuously differentiable} functional $V:\mathbb{R}_{\geq 0}\times  X \to \mathbb{R}_{\geq 0}$ is called an LiISS Lyapunov functional {(LiISS-LF)} of system \eqref{systems}, if it has the following properties:
	\begin{enumerate}
		\item[(i)]  there {exist} two functions $ \alpha_{1},\alpha_{2} \in \mathcal{K}_{\infty}$  such that
		\begin{align}\label{V}
			{\alpha_{1}}(\|x\|_{ X })\leq {V(t, x )}\leq{\alpha_{2}}(\|x\|_{ X }),  \forall x\in X ,  t\in\mathbb{R}_{\geq0};
		\end{align}
		\item[(ii)]  there {exist two nonnegative} locally integrable functions $g_1, g_2$ defined on $\mathbb{R}_{\geq 0}$ satisfying \begin{align*}
			\lim\limits_{t\rightarrow +\infty}\int_0^tg_1(\tau){\rm{d}}\tau=+\infty\quad \text{and}\quad g_2(t)\leq g_1(t),\forall t\geq0,
		\end{align*}
		two functions $\theta_{1},\theta_{2} \in\mathcal{K}$  satisfying
		\begin{align}
			\theta_{2}(s)\leq\theta_{1}(s),  \forall s\in[0, R'] \label{4}
		\end{align}
		with {a constant $R'\in \mathbb{R}_{>0}$,}
		and a function $\phi\in\mathcal{K}$,   such that  the  derivative of $V$ along the trajectory of   system \eqref{systems}  satisfies the following inequality:
		\begin{align}\label{DV}
			{\dot{V}(t, x)} \leq -g_{1}(t)\theta_{1}(\|x\|_{ X })+g_{2}(t)\theta_{2}(\|x\|_{ X })+\phi(\|u(t)\|_{ U}) ,  \forall   t \in\mathbb{R}_{\geq0}.
		\end{align}
	\end{enumerate}
	
	{Furthermore, if $g_2(t)\equiv 0$ in the condition (ii), then $V$ is called an  iISS Lyapunov functional (iISS-LF) of system \eqref{systems}.}
\end{definition}

Now, we state      the LiISS (or iISS) Lyapunov  theorem for system \eqref{systems}.
\begin{theorem} \label{TLiISS}
	{If system \eqref{systems} admits an LiISS (or iISS) Lyapunov functional, then it is LiISS (or iISS).}
\end{theorem}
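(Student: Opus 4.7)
The plan is to reduce the theorem to the scalar comparison principle Proposition~\ref{lemma 2.4}, applied to the Lyapunov function evaluated along a trajectory.

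Set $y(t):=V(t,x(t))$ along a classical solution of \eqref{systems}. The coercivity bound \eqref{V} inverts to $\alpha_2^{-1}(y)\leq\|x\|_X\leq\alpha_1^{-1}(y)$; combining with the monotonicity of $\theta_1,\theta_2\in\mathcal{K}$ and the dissipation inequality \eqref{DV}, the derivative of $y$ along \eqref{systems} satisfies the closed scalar inequality
\[
\dot y(t)\leq -g_1(t)\alpha_3(y(t))+g_2(t)\alpha_4(y(t))+\phi(\|u(t)\|_U),
\]
with $\alpha_3:=\theta_1\circ\alpha_2^{-1}\in\mathcal{K}$ and $\alpha_4:=\theta_2\circ\alpha_1^{-1}\in\mathcal{K}$. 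After selecting $r'>0$ small enough that the hypotheses of Proposition~\ref{lemma 2.4}(ii) are met for these $\alpha_3,\alpha_4$, I would apply Proposition~\ref{lemma 2.4} with $v(t):=\phi(\|u(t)\|_U)$ to obtain some $\beta'\in\mathcal{KL}$ satisfying
\[
y(t)\leq\beta'(y(0),t)+2\int_0^t\phi(\|u(s)\|_U)\,\mathrm{d}s,
\]
provided the smallness condition $y(0)+2\int_0^T\phi(\|u(s)\|_U)\,\mathrm{d}s<r'$ holds; in the iISS case $g_2\equiv 0$, the ``furthermore'' clause of Proposition~\ref{lemma 2.4} removes this smallness restriction entirely.

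Pulling back via $y(0)\leq\alpha_2(\|x_0\|_X)$, $\alpha_1(\|x(t)\|_X)\leq y(t)$, together with the subadditivity $\alpha_1^{-1}(a+b)\leq\alpha_1^{-1}(2a)+\alpha_1^{-1}(2b)$, I obtain
\[
\|x(t)\|_X\leq\beta(\|x_0\|_X,t)+\sigma\!\left(\int_0^t\gamma(\|u(s)\|_U)\,\mathrm{d}s\right),
\]
with $\beta(s,t):=\alpha_1^{-1}(2\beta'(\alpha_2(s),t))\in\mathcal{KL}$, $\sigma(s):=\alpha_1^{-1}(4s)\in\mathcal{K}_\infty$, and $\gamma:=\phi\in\mathcal{K}$. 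The admissible ball $\mathscr{B}_R$ in the LiISS case is then identified by choosing $R$, $\sigma_0$, $\gamma_0$ so that $(x_0,u)\in\mathscr{B}_R$ enforces $\alpha_2(\|x_0\|_X)+2\int_0^T\phi(\|u(s)\|_U)\,\mathrm{d}s<r'$ uniformly in $T\in\mathbb{R}_{>0}$; in the iISS case no such ball is needed and the same computation yields the global estimate.

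The main obstacle is the verification, in the second step, that $\alpha_3-\alpha_4\geq 0$ on $[0,r']$ as required by Proposition~\ref{lemma 2.4}(ii). Although $\theta_2\leq\theta_1$ on $[0,R']$ by hypothesis, after composing with the distinct inverses $\alpha_2^{-1}\leq\alpha_1^{-1}$ (forced by $\alpha_1\leq\alpha_2$) the ordering can in principle reverse near the origin. I would address this by shrinking $r'$ so that $\alpha_1^{-1}(r')\leq R'$ and performing the reduction at the $\|x\|_X$-level inside the trapping region $\{\|x\|_X\leq R'\}$, where the pointwise comparison $\theta_2(\|x\|_X)\leq\theta_1(\|x\|_X)$ is directly available, and the required positivity can then be transferred to the composed functions on $[0,r']$. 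The trapping continuation argument already present in the proof of Proposition~\ref{lemma 2.4} keeps $y(t)$ confined to $[0,r']$ for all $t$, and a final appeal to Lemma~\ref{lemma 2.3} closes the estimate.
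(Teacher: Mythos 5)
Your proposal follows essentially the same route as the paper's proof: set $y(t)=V(t,x(t))$ along a classical solution on its maximal interval, pass from \eqref{V} and \eqref{DV} to the scalar inequality with $\alpha_3=\theta_1\circ\alpha_2^{-1}$ and $\alpha_4=\theta_2\circ\alpha_1^{-1}$, invoke Proposition~\ref{lemma 2.4} (its ``furthermore'' clause in the iISS case), and pull the estimate back through $\alpha_1$; the paper even makes the same choices $r'=\min\{\alpha_1(R'),\alpha_2(R')\}$, $\gamma_0=2\phi$, $\sigma_0=\alpha_2^{-1}$, $R=\alpha_2^{-1}(r'/2)$, and the same factor-of-two splitting of $\alpha_1^{-1}$.

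Two points of divergence are worth recording. First, the obstacle you flag is real: since \eqref{V} forces $\alpha_1\le\alpha_2$ and hence $\alpha_2^{-1}\le\alpha_1^{-1}$, the hypothesis $\theta_2\le\theta_1$ on $[0,R']$ does not by itself yield $\theta_2\circ\alpha_1^{-1}\le\theta_1\circ\alpha_2^{-1}$ on $[0,r']$ (take $\theta_1=\theta_2$ and $\alpha_1\ne\alpha_2$). The paper simply asserts that \eqref{4} implies $\alpha_3\ge\alpha_4$ on $[0,r']$ and moves on; the assertion does hold in all of the paper's applications, where $\theta_2$ vanishes to strictly higher order than $\theta_1$ at the origin, but you are right that it is not automatic. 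Your proposed repair (run the comparison at the $\|x\|_X$-level inside the trapping region) is the correct instinct but is left sketchy: to feed Lemma~\ref{lemma 2.3} you must still minorize $\theta_1(\|x\|_X)-\theta_2(\|x\|_X)$ by a continuous nonnegative function of $y$ alone, e.g.\ by taking an infimum over $\alpha_2^{-1}(y)\le\|x\|_X\le\alpha_1^{-1}(y)$, and the $C^1$ requirement in Proposition~\ref{lemma 2.4}(ii) also needs to be checked. Second, you stop at the estimate on the maximal existence interval $[0,T]$; the paper adds the step of deducing from the resulting uniform bound, via \cite[Theorem 1.4]{Pazy2012semigroups}, that the maximal interval is in fact $[0,+\infty)$, which is needed for the LiISS inequality to hold for all $t\in\mathbb{R}_{\ge0}$ as the definition requires. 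With those two items filled in, your argument coincides with the paper's.
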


\begin{proof} 	
	Consider the      maximal time interval ${[0,T]}$ with some $T\in\mathbb{R}_{> 0}$ or $[0,+\infty)$, over which   system~\eqref{systems} admits a {classical} solution $x$ corresponding to the initial state and input  $(x_{0},u) \in  D(A(0)) \times U $. Let $V(t,x)$ be an LiISS-LF of system~\eqref{systems} and $y(t):=V(t, x(t))$.
	
	If the  maximal time interval for the existence of a solution is  ${[0,T]}$ with some $T\in\mathbb{R}_{> 0}$, then we deduce from \eqref{V} and \eqref{DV}   that the derivative of $y$ along the trajectory $x$ satisfies
	\begin{align}\label{18}
		\dot{y}(t)\leq-g_1(t)\theta_{1}\left({\alpha_{2}}^{-1}(y(t))\right)+g_2(t)\theta_{2}\left({\alpha_{1}}^{-1}(y(t))\right)+{\phi}\left({\|u(t)\|_{ U}}\right),\forall t\in[0,T].
	\end{align}
	
	Note that the condition~\eqref{4} implies that
	\begin{align*}
		\alpha_3(s):=\theta_{1}\left({\alpha_{2}}^{-1}(s)\right) \geq \alpha_4(s):=\theta_{2}\left({\alpha_{1}}^{-1}(s)\right),\forall s\in \left[0,r'\right],
	\end{align*}
	where $r':=\min\{\alpha_1(R'),\alpha_2(R')\}$.
	
	Note also that if $(x_{0},u) $ belongs to $  \mathscr{B}_{R}$ with a positive constant {$R:=\alpha_2^{-1}\left(\frac{r'}{2}\right)$} and functions $\gamma_0(s):=2\phi(s), {\sigma_{0}(s):= \alpha_2^{-1}(s)}$ for $s\in \mathbb{R}_{\geq 0}$, then the condition~\eqref{V} and $(x_{0},u)\in \mathscr{B}_{R}$ ensure  that
	\begin{align*}
		y_0+  \int^{T}_{0}2{\phi}(\|u(s)\|_{ U}){\rm d}s\leq&  \alpha_2(\|x_0\|_X )+\int^{T}_{0}2{\phi}(\|u(s)\|_{ U}){\rm d}s\notag\\
		=&\alpha_2(\|x_0\|_X )+\alpha_2\left(\sigma_{0}\left(\int^{T}_{0}\gamma_0(\|u(s)\|_{ U}){\rm d}s\right)\right)\notag\\
		\leq &2\alpha_2\left(\|x_0\|_X +\sigma_{0}\left(\int^{T}_{0}\gamma_0(\|u(s)\|_{ U}){\rm d}s\right)\right)\notag\\
		\leq & 2\alpha_2( R ) \notag\\
		=&r'.
	\end{align*}
	Applying {Proposition}~\ref{lemma 2.4} to \eqref{18}, we deduce that
	there exists a function $\beta_{1}\in\mathcal{K}\mathcal{L}$   such  that
	\begin{align*}
		y(t)\leq\beta_{1}(y(0), t)+2\int^{t}_{0}{\phi}(\|u(s)\|_{ U}){\rm d}s,\forall t\in[0,T].
	\end{align*}
	In view of \eqref{V}, it holds that
	\begin{align}\label{19}
		\alpha_{1}(\|x\|_{ X })\leq \beta_{2}(\|x_{0}\|_{ X }, t)+2\int^{t}_{0}{\phi}(\|u(s)\|_{ U}){\rm d}s,\forall t\in[0,T],
	\end{align}
	where $\beta_{2}(s,r):=\beta_{1}({\alpha_{2}}(s),  r)$ for $ s,r \in   \mathbb{R}_{\geq 0} $ is  a $ \mathcal{K}\mathcal{L}$-function.
	
	Let $\sigma(s) := \alpha_{1}^{-1}(2s),  {\gamma}(s) := 2{\phi}(s)$, and $\beta(s, r) =\alpha_{1}^{-1}(2\beta_{2}(s, r))$ for $ s,r \in  \mathbb{R}_{\geq 0} $. Then  \eqref{19} implies that the inequality \eqref{iISS} holds true for all $t\in[0,T]$.   Meanwhile, we deduce that the solution {to} system~\eqref{systems} is uniformly bounded. Therefore, using \cite[Theorem 1.4]{Pazy2012semigroups}, we infer that the   maximal time interval must be $[0,+\infty)$.
	Consequently,   the inequality \eqref{iISS} holds true for all $t\in\mathbb{R}_{\geq0}$. We conclude that system~\eqref{systems} is LiISS.
	
	Furthermore,   if system~\eqref{systems} admits an iISS-LF, then, by virtue of  Lemma~\ref{lemma 2.4} with $g_2(t)\equiv0$, it is easy to see that \eqref{19} holds {true} for all $(x_{0},u) \in  D(A(0)) \times U $. Thus, system~\eqref{systems} is iISS.
\end{proof}

\subsection{Constructing LiISS-LFs   in the framework of Hilbert spaces}\label{sec:3}
In this section, let $ X $  be  a Hilbert space  with   scalar inner product $\langle \cdot,\cdot \rangle$ and   norm $\| \cdot \|_ X :=\sqrt{\langle \cdot,\cdot \rangle}$. 
We provide sufficient conditions to ensure the existence of an LiISS-LF and construct LiISS-LFs for system \eqref{systems}  in the framework of Hilbert spaces. More specifically, we prove the following result.

\begin{theorem}\label{Thm2}
Suppose that there exist  a continuously differentiable,  {uniformly bounded,    uniformly  coercive}, and  positive-definite operator $P(t)\in {\mathscr{L}}( X )$  defined for any $t\in\mathbb{R}_{\geq0}$,  nonnegative and locally integrable functions $a_1(t), a_2(t)$ defined over $[0,+\infty)$ and satisfying
\begin{align*}
	\lim\limits_{t\rightarrow +\infty}\int_0^ta_1(\tau){\rm{d}}\tau=+\infty\quad   \text{and}\quad
	a_2(t)\leq M a_1(t)
\end{align*}
with a constant $M\geq 0$,   constants  $m_1,m_2>2$,  and a function $\zeta \in\mathcal{K}$, such that the following inequality holds true:
\begin{align}\label{ro}
	\langle ( {A^*(t)P(t)+P(t)A(t)+\dot{P}(t)})x , x\rangle+2\langle F(t, x, u), P(t)x \rangle
	\leq&-a_1(t)\|x\|^2_{ X }+ a_2(t) {\left(\|x\|^{m_1}_ X +\|x\|^{m_2}_ X \right)}\notag\\
	&+ {\zeta({\|u\|_{U}})}, \forall x\in D(A),{u\in U}, t\in\mathbb{R}_{\geq 0}.
\end{align}
Then, $ V(t,x):=\langle {P(t)}x, x \rangle $ is {an} {LiISS-LF}, and therefore, system \eqref{systems} is LiISS. In particular, if $a_2\equiv 0$ on $[0,+\infty)$, then $ V(t,x):=\langle {P(t)}x, x \rangle $ is {an} {iISS-LF}, and hence, system \eqref{systems} is iISS.
\end{theorem}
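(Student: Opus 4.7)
The plan is to verify that $V(t,x):=\langle P(t)x,x\rangle$ meets the two conditions in Definition~\ref{DLiISS}, and then invoke Theorem~\ref{TLiISS}. The uniform boundedness and uniform coercivity of $P(t)\in\mathscr{L}(X)$ furnish constants $c_1,c_2>0$ with $c_1\|x\|_X^2\leq\langle P(t)x,x\rangle\leq c_2\|x\|_X^2$ for all $t\in\mathbb{R}_{\geq 0}$ and $x\in X$, so the choices $\alpha_1(s):=c_1 s^2$ and $\alpha_2(s):=c_2 s^2$ immediately verify condition~(i).

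For condition~(ii), I would differentiate $V$ along a classical solution of~\eqref{systems}. Using the product rule, the continuous differentiability and self-adjointness of $P(t)$, and the dynamics~\eqref{systems-a}, one obtains
\begin{align*}
\dot V(t,x)=\langle (A^*(t)P(t)+P(t)A(t)+\dot P(t))x,x\rangle+2\langle F(t,x,u),P(t)x\rangle,
\end{align*}
whereupon hypothesis~\eqref{ro} yields
\begin{align*}
\dot V(t,x)\leq -a_1(t)\|x\|_X^{2}+a_2(t)\bigl(\|x\|_X^{m_1}+\|x\|_X^{m_2}\bigr)+\zeta(\|u\|_U).
\end{align*}
To recast this estimate into the form~\eqref{DV}, I would set $g_1(t):=(1+M)a_1(t)$, $g_2(t):=a_2(t)$, $\theta_1(s):=s^{2}/(1+M)$, $\theta_2(s):=s^{m_1}+s^{m_2}$, and $\phi(s):=\zeta(s)$.

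It then remains to check the structural requirements of Definition~\ref{DLiISS}(ii): the hypothesis $a_2(t)\leq Ma_1(t)$ gives $g_2(t)\leq g_1(t)$ pointwise; divergence of $\int_0^{t}a_1$ transfers to $g_1$; and $\theta_1,\theta_2,\phi\in\mathcal{K}$ is immediate. The ordering $\theta_2(s)\leq\theta_1(s)$ on some interval $[0,R']$ with $R'>0$ follows from $\theta_2(s)/\theta_1(s)=(1+M)(s^{m_1-2}+s^{m_2-2})\to 0$ as $s\to 0^{+}$, which is where the superlinearity $m_1,m_2>2$ is indispensable. Thus $V$ qualifies as an LiISS-LF and Theorem~\ref{TLiISS} delivers the LiISS of~\eqref{systems}. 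For $a_2\equiv 0$, the term $g_2\theta_2$ vanishes identically, $V$ becomes an iISS-LF in the sense of Definition~\ref{DLiISS}, and Theorem~\ref{TLiISS} yields global iISS. The main subtlety is exactly the near-origin comparison $\theta_2\leq\theta_1$: it would collapse if either exponent were $\leq 2$, which both justifies the strict inequalities $m_1,m_2>2$ in the statement and explains why the conclusion is necessarily \emph{local} in the presence of the superlinear $a_2$ term.
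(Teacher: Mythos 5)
Your proposal is correct and follows essentially the same route as the paper: verify the two conditions of Definition~\ref{DLiISS} for $V(t,x)=\langle P(t)x,x\rangle$ using the uniform bounds on $P(t)$, compute $\dot V$ via the product rule, apply \eqref{ro}, and exploit $m_1,m_2>2$ to obtain $\theta_2\leq\theta_1$ near the origin before invoking Theorem~\ref{TLiISS}. The only difference is cosmetic — the paper takes $g_1=g_2=a_1$ with $\theta_2(s)=M(s^{m_1}+s^{m_2})$, whereas you take $g_1=(1+M)a_1$, $g_2=a_2$ with rescaled $\theta_1$ — and the two choices are equivalent.
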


\begin{proof}
{Since $P(t)$ is  a uniformly   bounded, uniformly coercive, and  positive-define operator,} there exist  $\eta_1,\eta_2\in \mathbb{R}_{>0}$ such that
\begin{equation*}
	\eta_1  \|x\|^2_{ X }\leq \langle P(t)x, x \rangle\leq  \|P(t)\| \|x\|^2_{ X }\leq \eta_{2} \|x\|^2_{ X },  \forall x\in  X ,
\end{equation*}
namely,
\begin{align*}
	\eta_{1}\|x\|^{2}_{ X }\leq V(t,x)\leq\eta_{2}\|x\|^{2}_{ X }, \forall x\in X .
\end{align*}

The derivative of $V$ along the trajectory of system~\eqref{systems} is given as follows:
\begin{align*}
	\dot{V}_{u}(t,x)= & {\langle \dot{P}x, x\rangle}+\langle P\dot{x}, x\rangle+\langle Px, \dot{x}\rangle\\
	= & {\langle \dot{P}x, x\rangle}+\langle P(Ax+F(t, x, u)), x\rangle+\langle Px, Ax+F(t, x, u)\rangle\\
	= & {\langle \dot{P}x, x\rangle}+\langle PAx, x\rangle+\langle PF(t, x, u), x\rangle+\langle Px, Ax\rangle+\langle Px, F(t, x, u)\rangle\\
	= & {\langle \dot{P}x, x\rangle}+\langle (PA+A^{*}P)x, x\rangle+\langle PF(t, x, u), x\rangle+\langle Px, F(t, x, u)\rangle\\
	= & {\langle (\dot{P}+PA+A^{*}P)x, x\rangle +2\langle Px, F(t, x, u)\rangle},
\end{align*}
which, along with \eqref{ro}, yields
\begin{align*}
	\dot{V}_{u}(t, x)
	\leq&{-a_1(t)\|x\|^2_{X}+ a_2(t)\left(\|x\|^{m_1}_ X +\|x\|^{m_2}_ X \right)+ \zeta(\|u\|_{U})}\notag\\
	\leq&  {-a_1(t)\|x\|^2_{X}+  {a_1(t) M}\left(\|x\|^{m_1}_ X +\|x\|^{m_2}_ X \right)+ \zeta(\|u\|_{U})}.
\end{align*}

Let   $g_1(t)= g_2(t):= a_1(t)$, {$\theta_{1}(s):=s^2, \theta_{2}(s):=M\left(s^{m_1}+s^{m_2}\right),\phi(s):= \zeta(s)$  for $s\in \mathbb{R}_{\geq 0}$}. {Note that $m_1>2$ and $m_2>2$}. It is clear that $\theta_{2}(s)\leq \theta_{1}(s)$ for sufficiently small $s>0$.
According to   Definition~\ref{DLiISS}, we deduce that $V(t,x)= \langle P(t)x,  x\rangle$ is {an LiISS-LF} for system~\eqref{systems}. Then, by Theorem~\ref{TLiISS}, we infer that system~\eqref{systems} is LiISS. Furthermore, if $a_2(t)\equiv 0$, we conclude that $V(t,x) = \langle P(t)x,  x\rangle$ is {an  iISS-LF} for system~\eqref{systems}, and hence, system~\eqref{systems} is iISS.
\end{proof}

As a consequence of Theorem \ref{Thm2}, we  can prove the following result, which provides a sufficient condition to ensure the existence of an LiISS-LF for a class of autonomous infinite-dimensional systems with superlinear terms.
\begin{corollary}
Let  $A$  be time-invariant. Suppose that there exist
a   coercive positive-definite operator $P\in {\mathscr{L}}( X )$, constants $\alpha\in[0,1)$ and $\mu_i,\widetilde{m}_i,\widetilde{n}_i\in\mathbb{R}_{\geq 0}$ satisfying
\begin{align*}{ \widetilde{m}_i>2-\widetilde{n}_i}>0 ,i=1,2 ,
\end{align*}
and a function $\zeta \in\mathcal{K}$,   such that
\begin{align*}
	\langle ( {A^*P+PA})x , x\rangle\leq & -\|x\|^2_{ X _\alpha} ,  {\forall  x\in D(A),}
\end{align*}
and
\begin{align*}
	{\langle F(t, x, u),  Px \rangle}
	\leq& {\mu_1\|x\|_{ X }^{\widetilde{m}_1}\|x\|^{\widetilde{n}_1}_{ X _{\alpha}}+\mu_2\|x\|_{ X}^{\widetilde{m}_2}}\|x\|^{\widetilde{n}_2}_{ X _{\alpha}}+{\zeta(\|u\|_{U})} ,  \forall x\in D(A),{u\in U}.
\end{align*}
Then, $ V(x):=\langle {P}x, x \rangle $ is {an} {LiISS-LF}, and therefore, system \eqref{systems} is LiISS.
\end{corollary}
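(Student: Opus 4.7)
The plan is to verify the hypotheses of Theorem~\ref{Thm2} with $P(t)\equiv P$ (so $\dot P\equiv 0$), $a_1$ and $a_2$ chosen as suitable positive constants, $m_i:=\frac{2\widetilde{m}_i}{2-\widetilde{n}_i}$, and $\zeta$ inherited from the hypothesis. The core obstacle is absorbing the mixed $\|x\|_{X_\alpha}^{\widetilde{n}_i}$ factors into the single dissipation budget $-\|x\|_{X_\alpha}^2$ coming from $\langle(A^*P+PA)x,x\rangle$, while keeping the $X$-norm powers that remain strictly larger than $2$; this is the only genuinely non-formal step.

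First I combine the two given estimates to obtain
\begin{align*}
\langle (A^*P+PA)x,x\rangle+2\langle F(t,x,u),Px\rangle
\leq -\|x\|_{X_\alpha}^{2}+\sum_{i=1}^{2}2\mu_i\|x\|_{X}^{\widetilde{m}_i}\|x\|_{X_\alpha}^{\widetilde{n}_i}+2\zeta(\|u\|_U),
\end{align*}
valid for all $x\in D(A)$ and $u\in U$. The condition $\widetilde{m}_i>2-\widetilde{n}_i>0$ forces $\widetilde{n}_i\in(0,2)$ in the nontrivial case (if some $\widetilde{n}_i=0$ the corresponding term is already of the form $\mu_i\|x\|_X^{\widetilde{m}_i}$ with $\widetilde{m}_i>2$, which needs no manipulation).

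Next, for each $i$ with $\widetilde{n}_i>0$ I apply Young's inequality with conjugate exponents $p_i=\frac{2}{2-\widetilde{n}_i}$ and $q_i=\frac{2}{\widetilde{n}_i}$: for any $\varepsilon_i>0$,
\begin{align*}
2\mu_i\|x\|_X^{\widetilde{m}_i}\|x\|_{X_\alpha}^{\widetilde{n}_i}
\leq \frac{\varepsilon_i^{p_i}}{p_i}\bigl(2\mu_i\|x\|_X^{\widetilde{m}_i}\bigr)^{p_i}+\frac{1}{q_i\varepsilon_i^{q_i}}\|x\|_{X_\alpha}^{2}.
\end{align*}
Because $\widetilde{m}_i p_i=\frac{2\widetilde{m}_i}{2-\widetilde{n}_i}=m_i>2$ by hypothesis, the resulting $X$-norm power is admissible for Theorem~\ref{Thm2}. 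I then pick $\varepsilon_1,\varepsilon_2>0$ so small that $\frac{1}{q_1\varepsilon_1^{q_1}}+\frac{1}{q_2\varepsilon_2^{q_2}}\leq\tfrac{1}{2}$, so that the $\|x\|_{X_\alpha}^{2}$ residues are absorbed into $-\|x\|_{X_\alpha}^{2}$, leaving at most $-\tfrac{1}{2}\|x\|_{X_\alpha}^{2}$.

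Finally, I invoke the continuous embedding $X_\alpha\hookrightarrow X$ (a standard consequence of $(-A)^{-\alpha}$ being bounded on $X$ for the generator of an analytic semigroup, as recalled in the paper) to obtain a constant $c>0$ with $\|x\|_{X_\alpha}\geq c\|x\|_X$. Combining all the estimates produces
\begin{align*}
\langle (A^*P+PA)x,x\rangle+2\langle F(t,x,u),Px\rangle
\leq -\tfrac{c^2}{2}\|x\|_X^{2}+C_1\|x\|_X^{m_1}+C_2\|x\|_X^{m_2}+2\zeta(\|u\|_U),
\end{align*}
for explicit constants $C_1,C_2\geq 0$. Setting $a_1(t):=\tfrac{c^2}{2}$, $a_2(t):=\max\{C_1,C_2\}$, $M:=2\max\{C_1,C_2\}/c^2$, and the $\mathcal{K}$-function $2\zeta$ in place of $\zeta$, I have exactly condition \eqref{ro} of Theorem~\ref{Thm2}: $a_1,a_2$ are (constant hence) locally integrable, $\int_0^t a_1\,{\rm d}\tau\to\infty$, $a_2\leq M a_1$, and $m_1,m_2>2$. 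Theorem~\ref{Thm2} then directly yields that $V(x)=\langle Px,x\rangle$ is an LiISS-LF and that system~\eqref{systems} is LiISS, completing the proof.
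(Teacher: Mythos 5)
Your proposal is correct and follows essentially the same route as the paper's proof: split the mixed term $\|x\|_{X}^{\widetilde{m}_i}\|x\|_{X_\alpha}^{\widetilde{n}_i}$ by Young's inequality with exponents $\tfrac{2}{2-\widetilde{n}_i}$ and $\tfrac{2}{\widetilde{n}_i}$, absorb the resulting $\|x\|_{X_\alpha}^{2}$ residue into the dissipation $-\|x\|_{X_\alpha}^{2}$, convert the surviving negative term to $-c\|x\|_X^2$ via the embedding $\|x\|_X\leq C\|x\|_{X_\alpha}$, and invoke Theorem~\ref{Thm2} with constant $a_1,a_2$. The only blemish is a direction slip: with your parametrization the residue coefficient $\tfrac{1}{q_i\varepsilon_i^{q_i}}$ is made $\leq\tfrac12$ by taking $\varepsilon_i$ \emph{large}, not small (equivalently, put the $\varepsilon_i$ on the $X_\alpha$ factor as the paper does); this does not affect the argument.
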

\begin{proof}
Using the Young's inequality, we have
\begin{align*}
	\|x\|_{ X }^{\widetilde{m}_i}\|x\|^{\widetilde{n}_i}_{ X _{\alpha}}
	\leq& \varepsilon\|x\|^2_{ X_{\alpha}}+\varepsilon^{-\frac{\widetilde{n}_i}{2-\widetilde{n}_i}}\|x\|^{\frac{2\widetilde{m}_i}{2-\widetilde{n}_i}}_ X , i=1,2,
\end{align*}
where $\varepsilon>0$ is a positive constant to be chosen later.
Then, we have
\begin{align*}
	\langle ( {A^*P+PA})x , x\rangle+2\langle F(t, x, u),  Px \rangle
	\leq& -\left(1-2\mu_1\varepsilon-2\mu_2\varepsilon\right)\|x\|^2_{X_{\alpha}}+2\mu_1\varepsilon^{-\frac{\widetilde{n}_1}{2-\widetilde{n}_1}}\|x\|^{\frac{2\widetilde{m}_1}{2-\widetilde{n}_1}}_ X +2\mu_2\varepsilon^{-\frac{\widetilde{n}_2}{2-\widetilde{n}_2}}\|x\|^{\frac{2\widetilde{m}_2}{2-\widetilde{n}_2}}_ X\notag\\
	& +2{\zeta(\|u\|_{U})}.
\end{align*}

Choosing $\varepsilon\in(0,1)$ to be sufficiently small   so that $1-2\mu_1\varepsilon-2\mu_2\varepsilon>0$ and using the fact {that} $\|x\|_{ X }\leq C\|x\|_{ X_{\alpha}}$ with some positive constant $C$, we deduce that
\begin{align*}
	\langle ( {A^*P+PA})x , x\rangle+2\langle F(t, x, u),  Px \rangle
	\leq&  -\frac{1}{C}\left(1-2\mu_1\varepsilon-2\mu_2\varepsilon\right)\|x\|^2_{ X }+2\mu_1\varepsilon^{-\frac{\widetilde{n}_1}{2-\widetilde{n}_1}}\|x\|^{\frac{2\widetilde{m}_1}{2-\widetilde{n}_1}}_ X +2\mu_2\varepsilon^{-\frac{\widetilde{n}_2}{2-\widetilde{n}_2}}\|x\|^{\frac{2\widetilde{m}_2}{2-\widetilde{n}_2}}_ X\notag\\
	& +2{\zeta(\|u\|_{U})}\notag\\
	\leq& -\frac{1}{C}\left(1-2\mu_1\varepsilon-2\mu_2\varepsilon\right)\|x\|^2_{ X }+\widetilde b\left(\|x\|^{\frac{2\widetilde{m}_1}{2-\widetilde{n}_1}}_ X +\|x\|^{\frac{2\widetilde{m}_2}{2-\widetilde{n}_2}}_ X \right)+2{\zeta(\|u\|_{U})},
\end{align*}
where $\widetilde b:=2\max\left\{\mu_1\varepsilon^{-\frac{\widetilde{n}_1}{2-\widetilde{n}_1}},\mu_2\varepsilon^{-\frac{\widetilde{n}_2}{2-\widetilde{n}_2}}\right\}$. Subsequently, the condition \eqref{ro} is fulfilled.  Theorem \ref{Thm2} ensures that
{$ V(t,x):=\langle {P(t)}x, x \rangle $} is {an} {LiISS-LF}. Therefore, {by Theorem \ref{TLiISS},}  system \eqref{systems} is LiISS.
\end{proof}

\begin{remark}
It is worth mentioning that for general nonlinear non-autonomous infinite-dimensional systems under an abstract form, verifying the  structural conditions of nonlinear terms is extremely challenging. As shown in Section \ref{sec:5.2}, even for specific PDEs,  it remains difficult to verify  the structural conditions in Theorem \ref{Thm2} when  superlinear terms are involved, usually requiring complex     techniques or  tools such as Sobolev embedding  and interpolation inequalities  to handle the nonlinear terms.
\end{remark}
%
%
%
%
%
%

\section{Illustrative examples}\label{sec:5}

In this section, we present two examples to illustrate the application of the LiISS Lyapunov theorem to an ODE system and a PDE
system, respectively, and provide numerical simulations to validate the LiISS results for the two systems.

\subsection{Local stabilization of
a nonlinear ODE system under linear state feedback control}\label{sec:5.1}
In this section, as an application of the LiISS Lyapunov theorem, we  investigate  the LiISS of  a  nonlinear ODE  system with time-varying coefficients and superlinear terms.   More {precisely}, we consider the linear state feedback control problem for the following nonlinear system
\begin{subequations}\label{nonlinear ODE}
\begin{align}
	\dot{x}_1 =&g(t)x^m_1-h_1(t)x_2^3 +u_1  , \\
	\dot{x}_2 =&{\tilde{g}(t)x^n_2}-b(t)x_1^2x_2+u_2,  \label{x2}
\end{align}
\end{subequations}
where $g\in C(\mathbb{R}_{\geq 0};\mathbb{R}_{\geq 0})$, ${\tilde{g}\in C(\mathbb{R}_{\geq 0};\mathbb{R}_{\geq 0})}$,   $b\in C(\mathbb{R}_{\geq 0};\mathbb{R}_{>0}) $,  $h_1\in C^1(\mathbb{R}_{\geq 0};\mathbb{R}_{\geq 0}) $ satisfying
\begin{align}\label{condition-h1}
0\leq h_1(t)\leq M\quad \text{and} \quad h_1'(t)\leq h_1(t),\quad \forall t\in \mathbb{R}_{\geq 0},
\end{align}
$M>0$ is a positive constant,    {$m>3$ and $n>1$} are  integers, and $u_1,u_2 $ are control inputs.

Note that the open-loop system~\eqref{nonlinear ODE} is not asymptotically stable at the origin for any given   initial data $x_1(0)$ and $x_2(0) $.
%
%
To apply a linear state-feedback controller,  we propose the following  control law using time-varying-gains:
\begin{subequations}\label{control law}
\begin{align}
	u_1(t):=&-g_1(t)x_1, \\
	u_2(t):=&h_2(t)x_1-g_2(t)x_2,
\end{align}
\end{subequations}
where $g_1,g_2,h_2\in C(\mathbb{R}_{\geq 0};\mathbb{R}_{\geq 0})$ satisfying
\begin{align}
g_1(t)\geq& {\overline{g}(t):=\max\left\{g(t),\tilde{g}(t)\right\}}, \forall t\in\mathbb{R}_{\geq0} \quad\text{and}\quad
\lim\limits_{t\rightarrow +\infty}\int_0^tg_1(\tau){\rm{d}}\tau=+\infty,\label{con-1}\\
g_2(t)\geq& 1+g_1(t),\forall t\in\mathbb{R}_{\geq0},\label{con-2}\\
h_2(t)=&\frac{h_1(t)}{1+h_1(t)},\forall t\in\mathbb{R}_{\geq0}\label{con-3}.
\end{align}
{Note that the coefficient matrix  of the linear part of system~\eqref{nonlinear ODE} under the control law \eqref{control law} is given by $A(t):=\begin{bmatrix}
-g_1(t) & 0\\
h_2(t) & -g_2(t)
\end{bmatrix}$, for which $g_1(t)$ may tend to $0$ as $t\to +\infty$.}
Note also that due to the presence of the term $g(t)x^m_1$ with a nonnegative function $g$ and positive integer $m$, the linear state-feedback control law \eqref{control law} can only
guarantee local asymptotic stability of system~\eqref{nonlinear ODE}. Furthermore, considering an additional disturbance $d(t)$ entering through $u_2$, the equation of   system~\eqref{nonlinear ODE} in closed loop becomes
\begin{subequations}\label{nonlinear ODE-closed loop}
\begin{align}
	\dot{x}_1 =&-g_1(t)x_1+g(t)x^m_1-h_1(t)x_2^3   , \\
	\dot{x}_2 =&h_2(t)x_1-g_2(t)x_2+{\tilde{g}(t)x^n_2}-b(t)x_1^2x_2+d(t) ,
\end{align}
\end{subequations}

 Assume that  {$d\in  L^{4} (0,T)$} for any $T>0$.  We have the following LiISS result  for system \eqref{nonlinear ODE-closed loop}.

\begin{proposition} \label{Prop.ODE}
System~\eqref{nonlinear ODE-closed loop} is LiISS.
\end{proposition}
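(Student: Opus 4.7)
Motivated by the cubic coupling term $-h_1(t)x_2^3$ in the $x_1$-equation together with the algebraic identity $(1+h_1(t))h_2(t)=h_1(t)$ enforced by~\eqref{con-3}, I would try the time-varying candidate
\begin{align*}
V(t,x_1,x_2) := \frac{1}{2}x_1^2 + \frac{1}{4}\bigl(1+h_1(t)\bigr)x_2^4.
\end{align*}
The goal is to verify that $V$ is an LiISS Lyapunov functional in the sense of Definition~\ref{DLiISS} with $X=\mathbb{R}^2$ equipped with the Euclidean norm $|\cdot|$, and then to invoke Theorem~\ref{TLiISS}. The sandwich inequality~\eqref{V} is straightforward: an upper bound of the form $\alpha_2(r)=\tfrac{1}{2}r^2+\tfrac{1+M}{4}r^4\in\mathcal{K}_\infty$ follows from $h_1\leq M$ in~\eqref{condition-h1}, while a lower bound $\alpha_1(r)=r^4/(8(1+r^2))\in\mathcal{K}_\infty$ is obtained by minimizing $\tfrac{1}{2}x_1^2+\tfrac{1}{4}x_2^4$ over the sphere $|x|=r$ via the substitution $s=x_2^2$.

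The core calculation is $\dot V$ along trajectories of~\eqref{nonlinear ODE-closed loop}. The critical step is the exact cancellation of cross terms: $-h_1(t)x_1x_2^3$ arising from $\dot x_1$ is annihilated by $(1+h_1(t))x_2^3\cdot h_2(t)x_1=h_1(t)x_1x_2^3$ arising from $(1+h_1(t))x_2^3\dot x_2$, precisely thanks to~\eqref{con-3}. After this cancellation I would (i) combine $\tfrac{1}{4}h_1'(t)x_2^4$ with $-(1+h_1(t))g_2(t)x_2^4$ using $h_1'\leq h_1$ from~\eqref{condition-h1} and $g_2\geq 1+g_1$ from~\eqref{con-2} to extract the dissipation $-(1+g_1(t))x_2^4$; (ii) discard the nonpositive term $-(1+h_1(t))b(t)x_1^2 x_2^4$; (iii) dominate the superlinear remainders $g(t)x_1^{m+1}$ and $(1+h_1(t))\tilde g(t)x_2^{n+3}$ by $g_1(t)\bigl(x_1^{m+1}+(1+M)x_2^{n+3}\bigr)$ via $\max\{g,\tilde g\}\leq g_1$ from~\eqref{con-1}; and (iv) treat the disturbance contribution $(1+h_1(t))x_2^3 d(t)$ by Young's inequality with exponents $(4/3,4)$, absorbing $\tfrac{3}{4}x_2^4$ into the dissipation. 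This yields
\begin{align*}
\dot V \leq -g_1(t)\bigl(x_1^2+x_2^4\bigr) + g_1(t)\bigl(x_1^{m+1}+(1+M)x_2^{n+3}\bigr) + C|d(t)|^4
\end{align*}
with $C=(1+M)^4/4$.

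To match Definition~\ref{DLiISS}, I set $g_2(t):=g_1(t)$, $\theta_1(r):=r^4/(8(1+r^2))$, $\theta_2(r):=r^{m+1}+(1+M)r^{n+3}$, and $\phi(s):=Cs^4$. The bound $x_1^2+x_2^4\geq\theta_1(|x|)$ follows from the same minimization already performed for $\alpha_1$, and $|x_i|\leq|x|$ gives the upper estimate for the superlinear part. Because the integers $m\geq 4$ and $n\geq 2$ imply $\theta_2(r)=O(r^5)$ as $r\to 0^+$ whereas $\theta_1(r)\sim r^4/8$, the relation $\theta_2\leq\theta_1$ required by~\eqref{4} holds on an explicit small interval $[0,R']$. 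All conditions of Definition~\ref{DLiISS} are met, so $V$ is an LiISS-LF and Theorem~\ref{TLiISS} yields the LiISS of~\eqref{nonlinear ODE-closed loop}.

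The chief obstacle is the construction of $V$: the quartic dependence $x_2^4$ (rather than the standard quadratic $x_2^2$) is forced by the cubic coupling $h_1(t)x_2^3$, and the time-varying weight $1+h_1(t)$ is calibrated precisely by~\eqref{con-3} so that the cross terms annihilate. Without this structural match the indefinite $x_1x_2^3$ contribution survives and cannot be controlled by the dissipation, while the superlinear terms $g(t)x_1^m$ and $\tilde g(t)x_2^n$ would then prevent any local polynomial estimate compatible with Definition~\ref{DLiISS}.
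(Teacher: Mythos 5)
Your proposal is correct and follows essentially the same route as the paper: the identical Lyapunov candidate $V(t,x)=\tfrac{1}{2}x_1^2+\tfrac{1}{4}(1+h_1(t))x_2^4$, the same exact cancellation of the $x_1x_2^3$ cross terms via \eqref{con-3}, and the same term-by-term treatment leading to an application of Theorem~\ref{TLiISS}. The only cosmetic differences are that the paper bounds $x_1^2+x_2^4$ from below by $\tfrac{1}{2}\min\{|x|^2,|x|^4\}$ via a separate lemma rather than your choice $\theta_1(r)=r^4/(8(1+r^2))$ obtained by minimizing over the sphere, and it uses a free parameter $\varepsilon$ in Young's inequality for the disturbance term instead of your fixed exponents $(4/3,4)$.
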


To prove Proposition~\ref{Prop.ODE}, we need the following lemma.
\begin{lemma}\label{le-5}
For any $x:={(x_1,x_2)^{\top}}\in \mathbb{R}^2$, it holds that
$x_1^2+x_2^4\geq\frac{1}{2}\min {\left\{|x|^2, |x|^4\right\}}$.

\end{lemma}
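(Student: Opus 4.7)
\textbf{Proof plan for Lemma~\ref{le-5}.} The plan is to reduce the inequality to the scalar quantity $r := |x| = \sqrt{x_1^2 + x_2^2}$ and split into two cases according to whether $r \leq 1$ or $r \geq 1$, since these two regimes determine which of $|x|^2$ and $|x|^4$ realizes the minimum. Explicitly, $\min\{|x|^2,|x|^4\} = |x|^4$ when $r\leq 1$ and $\min\{|x|^2,|x|^4\} = |x|^2$ when $r\geq 1$, so in each case the target becomes a purely polynomial inequality in $x_1$ and $x_2$.

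First, I would treat the case $r \leq 1$. Using the elementary bound $(x_1^2+x_2^2)^2 \leq 2(x_1^4 + x_2^4)$ and the fact that $|x_1|\leq r \leq 1$ gives $x_1^4 \leq x_1^2$, so
\begin{equation*}
\tfrac{1}{2}|x|^4 \;=\; \tfrac{1}{2}(x_1^2+x_2^2)^2 \;\leq\; x_1^4 + x_2^4 \;\leq\; x_1^2 + x_2^4,
\end{equation*}
which is the desired inequality. This case is essentially immediate once the correct form of the minimum is identified.

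Next, for the case $r \geq 1$, I would further split on the size of $|x_2|$. If $|x_2|\geq 1$, then $x_2^4 \geq x_2^2$, so $x_1^2 + x_2^4 \geq x_1^2 + x_2^2 = |x|^2 \geq \tfrac{1}{2}|x|^2$ and we are done. The remaining subcase $r\geq 1$, $|x_2|<1$ is where the main (mild) obstacle lies. Here I would exploit $x_1^2 = |x|^2 - x_2^2 \geq 1 - x_2^2$ (since $|x_2|\leq 1 \leq |x|$) to reduce the problem to a one-variable inequality. Specifically, the required inequality $x_1^2 + x_2^4 \geq \tfrac{1}{2}(x_1^2+x_2^2)$ is equivalent to $x_1^2 + 2x_2^4 - x_2^2 \geq 0$, which, using $x_1^2 \geq 1 - x_2^2$, is implied by
\begin{equation*}
1 - 2x_2^2 + 2x_2^4 \;\geq\; 0.
\end{equation*}
Setting $t := x_2^2 \in [0,1]$, this is the quadratic inequality $2t^2 - 2t + 1 \geq 0$, whose discriminant is $4 - 8 < 0$, so it holds for all real $t$ (with minimum value $1/2$ at $t = 1/2$). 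This closes the final subcase and establishes the lemma.

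The main obstacle, such as it is, is recognizing that the naive estimate $x_2^4 \geq 0$ alone does not suffice in the regime $r\geq 1$, $|x_2|<1$; one must trade off the deficit in $x_2^4$ against the surplus $x_1^2 \geq 1 - x_2^2$ forced by $r\geq 1$, which leads to the harmless quadratic in $t = x_2^2$. The constant $\tfrac{1}{2}$ in the statement is then sharp enough for all three regimes to combine uniformly.
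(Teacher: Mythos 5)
Your proof is correct, but it follows a genuinely different decomposition from the paper's. You split on $|x|\lessgtr 1$, which pins down exactly which of $|x|^2$ and $|x|^4$ realizes the minimum, and then prove the corresponding polynomial inequality in each regime; the only nontrivial step is the subcase $|x|\geq 1$, $|x_2|<1$, which you close by substituting $x_1^2\geq 1-x_2^2$ and checking that the quadratic $2t^2-2t+1$ in $t=x_2^2$ has negative discriminant. The paper instead splits on whether $x_1^2$ and $x_2^2$ exceed $\tfrac12$ and, in each of its three cases, bounds $x_1^2+x_2^4$ below by either $\tfrac12|x|^2$ or $\tfrac12|x|^4$ directly; since each of these dominates $\tfrac12\min\{|x|^2,|x|^4\}$, the paper never has to identify which term is the actual minimum, and every case reduces to a one-line chain of elementary inequalities (e.g.\ $x_2^4\geq\tfrac12 x_2^2$ when $x_2^2\geq\tfrac12$). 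Your route is more systematic in that it works with the minimum itself, at the cost of the extra discriminant computation; the paper's is slightly more economical because comparing against either bound separately suffices. Both arguments are complete and yield the same constant $\tfrac12$.
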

\begin{proof}
On the one hand, for $x_2^2 \geq \frac{1}{2}$, we have
\begin{align*}
	x_1^2+x_2^4=x_1^2+\left(\frac{1}{2} \cdot \frac{x_2^2}{\frac{1}{2}}\right)^2 = x_1^2+\frac{1}{4}\left(\frac{x_2^2}{\frac{1}{2}}\right)^2\geq x_1^2+\frac{1}{4}\cdot\frac{x_2^2}{\frac{1}{2}} =x_1^2+\frac{1}{2} x_2^2 \geq \frac{1}{2}{|x|^2}.
\end{align*}
On the other hand, for $x_2^2 \leq \frac{1}{2}$ and  $x_1^2 \geq  \frac{1}{2}$, we  deduce that
\begin{align*}
	x_1^2+x_2^4 \geq 	x_1^2=\frac{1}{2} x_1^2+\frac{1}{2} x_1^2\geq \frac{1}{2} x_1^2+\frac{1}{2} x_2^2\geq \frac{1}{2}{|x|^2}.
\end{align*}
In addition, for $x_2^2 \leq  \frac{1}{2}$ and  $x_1^2 \leq  \frac{1}{2}$, we have
\begin{align*}
	x_1^2+x_2^4 \geq  x_1^4+x_2^4 \geq  \frac{1}{2}{|x|^4}.
\end{align*}
Thus, for any $x:=(x_1,x_2)^{\top}\in \mathbb{R}^2$, it holds that
$x_1^2+x_2^4\geq\frac{1}{2}\min {\left\{|x|^2, |x|^4\right\}}$.
\end{proof}

Now we prove the LiISS of system \eqref{nonlinear ODE-closed loop} by using the LiISS Lyapunov theorem. The key step is   constructing an appropriate LiISS Lyapunov candidate and verifying the structural conditions in Definition \eqref{DLiISS}.

\begin{proof of Prop.ODE}
For any $x:=(x_1,x_2)^{\top}\in \mathbb{R}^2$, define $V(t,x):=\frac{1}{2}x_1^2+\frac{1}{4}(1+h_1(t))x_2^4$. We claim that $V(t,x)$ is  an LiISS Lyapunov function of system~\eqref{nonlinear ODE-closed loop}.
Indeed, by \eqref{condition-h1},  we have
\begin{align}\label{s-e-v}
	V(t,x)&\leq   \frac{1}{2}x_1^2+\frac{1}{4}(1+M)x_2^4\notag\\
	&\leq  \frac{1}{2}\left(x_1^2+x_2^2\right)+\frac{1}{4}(1+M)\left(x_1^2+x_2^2\right)^2\notag\\
	&\leq \max\left\{ \frac{1}{2},\frac{1}{4}(1+M)\right\}{\left(|x|^2+|x|^4\right)},\forall x\in \mathbb{R}^2.
\end{align}
Moreover, according to \eqref{condition-h1} and Lemma \ref{le-5}, we have
\begin{align*}
V(t,x)\geq\frac{1}{2}x_1^2+\frac{1}{4} x_2^4\geq \frac{1}{4}\left(x_1^2+x_2^4\right) \geq  \frac{1}{8}\min {\left\{|x|^2, |x|^4\right\}},\forall x\in \mathbb{R}^2.
\end{align*}

Now, along the trajectory of system~\eqref{nonlinear ODE-closed loop}, we deduce that
\begin{align}\label{eqq}
\dot{V}= & x_1 \dot{x}_1+\left(1+h_1(t)\right) x_2^3 \dot{x}_2+\frac{1}{4} h_1^{\prime}(t) x_2^4 \notag\\
= & x_1\left(-g_1(t) x_1+g(t) x_1^m-h_1(t) x_2^3\right)  +\left(1+h_1(t)\right) x_2^3\left(h_2(t) x_1-g_2(t) x_2+{\tilde{g}(t)x^n_2}-b(t) x_1^2 x_2+d(t)\right)  +\frac{1}{4} h_1^{\prime}(t) x_2^4 \notag\\
= & -g_1(t) x_1^2+g(t) x_1^{m+1}+{\tilde{g}(t)\left(1+h_1(t)\right) x_2^{n+3}}-h_1(t) x_1 x_2^3+\left(1+h_1(t)\right) h_2(t) x_1 x_2^3  -g_2(t)\left(1+h_1(t)\right) x_2^4 \notag\\
&-b(t)\left(1+h_1(t)\right) x_1^2 x_2^4+\left(1+h_1(t)\right) x_2^3 d(t)+\frac{1}{4} h^\prime_1(t) x_2^4
\end{align}

{Using the condition} \eqref{con-3} and the facts that $b> 0$  and $h_1\ge 0$, we have
\begin{align}\label{eq-1}
-h_1(t) x_1 x_2^3+\left(1+h_1(t)\right) h_2(t) x_1 x_2^3
=  -h_1(t) x_1 x_2^3+h_1(t) x_1 x_2^3=0,
\end{align}
\begin{align}\label{eq--1}
g(t)x_1^{m+1}+\tilde{g}(t)\left(1+h_1(t)\right) x_2^{n+3}\leq& \overline{g}(t)(1+M)\left(\left(x^2_1+x^2_2\right)^{\frac{m+1}{2}}+\left(x^2_1+x^2_2\right)^{\frac{n+3}{2}}\right)\notag\\
= &\overline{g}(t)(1+M){\left(|x|^{m+1}+|x|^{n+3}\right)},
\end{align}
and
\begin{align}\label{eq-2}
-b(t)\left(1+h_1(t)\right) x_1^2 x_2^4 \leq 0,
\end{align}
where ${\overline{g}(t):=\max\left\{g(t),\tilde{g}(t)\right\}}$. Meanwhile, by using the Young's inequality, we get
\begin{align}\label{eq-3}
x_2^3 d(t)
\leq  \left(\varepsilon x_2^4+\varepsilon^{-3}(d(t))^4\right),
\end{align}
where $\varepsilon$ is a positive constant  to be determined later.

In addition, by using \eqref{con-2} and \eqref{condition-h1}, we have
\begin{align}\label{eq-4}
-g_2(t)\left(1+h_1(t)\right) x_2^4 +\frac{1}{4} h_1^{\prime}(t) x_2^4
\leq & -\left(1+h_1(t)\right) x_2^4-g_1(t)\left(1+h_1(t)\right) x_2^4  +\frac{1}{4} h_1(t) x_2^4 \notag\\
\leq & -\frac{3}{4} h_1(t) x_2^4- x_2^4-g_1(t) x_2^4 \notag\\
\leq & -g_1(t) x_2^4- x_2^4.
\end{align}
Putting \eqref{eq-1}, \eqref{eq--1}, \eqref{eq-2}, \eqref{eq-3}, and \eqref{eq-4} into \eqref{eqq}, we have
\begin{align*}
\dot{V} \leq & -g_1(t)\left(x_1^2+x_2^4\right)+{\overline{g}(t)(1+M) {\left(|x|^{m+1}+|x|^{n+3}\right)}}+(1+M) \varepsilon^{-3}(d(t))^4+(-1+(1+M)\varepsilon)x^4_2.
\end{align*}
By Lemma \ref{le-5} and choosing $\varepsilon\in\left(0, \frac{1}{M+1}\right)$, we have \begin{align*}
\dot{V} 	
\leq& -\frac{1}{2}g_1(t)\min {\left\{|x|^2, |x|^4\right\}}+{\overline{g}(t)(1+M) \max{\left\{|x|^{m+1},|x|^{n+3}\right\}}}+(1+M) \varepsilon^{-3} |d(t) |^4.
\end{align*}

Let $\theta_1:=\frac{1}{2} \min \left\{s^2, s^4\right\}$, $\theta_2:={(1+M)\max\left\{s^{m+1},s^{n+3}\right\}}$, and $\phi:=(1+M)\varepsilon^{-3}s^4$ for $s\in \mathbb{R}_{\geq 0}$. Note that $m>3$ and $n>1$. It is clear that $\theta_2(s)\leq \theta_1(s)$ for sufficiently small $s>0$. According to Definition \ref{DLiISS}, we deduce that $V(t,x):=\frac{1}{2}x_1^2+\frac{1}{4}(1+h_1(t))x_2^4$ is  an LiISS Lyapunov function for system \eqref{nonlinear ODE-closed loop}. Then, by Theorem \ref{TLiISS}, we conclude that system \eqref{nonlinear ODE-closed loop} is LiISS.
\end{proof of Prop.ODE}

 \textbf{Numerical results}\quad In simulations, for   system \eqref{nonlinear ODE-closed loop}, i.e.,   system \eqref{nonlinear ODE} under the control law \eqref{control law} in the presence of disturbances,
we set
\begin{align*}
	m=4, \quad n=4, \quad b(t)= 0.08+0.03\sin(3\pi t) , \quad 	h_1(t)=1+0.1\sin(2\pi t),
\end{align*}
which satisfy $m,n>1$, $b\in C(\mathbb{R}_{\geq 0};\mathbb{R}_{>0}) $, and \eqref{condition-h1}. In addition, we set
\begin{align*}
	g_1(t)=\frac{5}{1+t}, \quad
	g_2(t)=1+g_1(t), \quad g(t)=\tilde{g}(t)=\frac{1}{1+t}, \quad  h_2(t)=\frac{h_1(t)}{1+h_1(t)},\quad 	 d(t)=A\left(0.6e^{- \sqrt[4]{t}  }+1.2\cos(\pi t)\right),
\end{align*}
which satisfy \eqref{con-1}, \eqref{con-2}, and \eqref{con-3}, where $A=\left\{0,3,4\right\}$ are used to describe the amplitude of disturbances.

	Figure \ref{fig1} shows that    system \eqref{nonlinear ODE} in open loop, i.e., $u_1\equiv u_2\equiv 0$, is not asymptotically stable at the origin even for   relatively small  initial data; whereas Fig. \ref{fig2}  (see also black solid and dashed {curves} in Fig.~\ref{fig4}) demonstrates that,  for the same initial data as in  Fig.~\ref{fig1},      system \eqref{nonlinear ODE} under the control law \eqref{control law}, i.e., system  \eqref{nonlinear ODE-closed loop} in the absence of disturbances, is  asymptotically stable at the origin.
	In the presence of different disturbances,  Fig. \ref{fig2},  Fig. \ref{fig3},  Fig. \ref{fig3'},  and Fig. \ref{fig4}   show that the solution {to} {system}  \eqref{nonlinear ODE-closed loop}  remains bounded under  variation of  disturbances and relatively small initial data.
	In particular, it is shown in  Fig. \ref{fig3} (a) and (b) (or Fig.~\ref{fig3'} (a) and (b)) that for the same initial datum,   the amplitude  of the states  of   system \eqref{nonlinear ODE-closed loop} decreases as the amplitude of the disturbances diminishes, while it is shown in  Fig. \ref{fig3} (a) and Fig.~\ref{fig3'} (a)  (or Fig. \ref{fig3} (b) and Fig.~\ref{fig3'} (b)) that for the same disturbances,  the amplitude  of the states  of   system \eqref{nonlinear ODE-closed loop} decreases as the  initial datum becomes small.
	Corresponding to the Fig. \ref{fig2},  Fig. \ref{fig3}, and Fig. \ref{fig3'}, the evolution of the states' norms for system \eqref{nonlinear ODE-closed loop} are depicted in Fig.~\ref{fig4}. Especially,  the black solid and dashed {curves}  illustrate  that the states' norms of system \eqref{nonlinear ODE-closed loop}   converge   rapidly to   the origin in the absence of disturbances for different initial data,  and   at any fixed time   the bound of the  states' norms diminishes as the initial value decreases; whereas the red and blue solid {curves} (or the red and blue dashed {curves}) illustrate that, for the same initial datum, the bound of the states' norms of   system \eqref{nonlinear ODE-closed loop} decreases as the amplitude of the disturbances diminishes. Notably, the LiISS property implies that all trajectories of system \eqref{nonlinear ODE-closed loop} with differently relatively small initial data
	are driven by a uniform decay rate; consequently, as time tends to infinity, the ultimate bound of  the states is  determined primarily by the disturbances rather than by the initial data.  This property is reflected by both the solid and dashed {curves}, indicating that despite different initial data,   the norms of the states admit  {almost the same bound} when the time is sufficiently large, provided the disturbances are identical. Note that for relatively large initial data, system \eqref{nonlinear ODE-closed loop} may blow up in finite time as shown in Fig. \ref{fig5}. Overall, Fig. \ref{fig1}, Fig.\ref{fig2}, Fig.\ref{fig3},  Fig. \ref{fig3'},   Fig.\ref{fig4}, and Fig.\ref{fig5} collectively well illustrate the LiISS property of   system \eqref{nonlinear ODE-closed loop}.

	\begin{figure*}[htbp!]
		\begin{center}
			\subfigure[{Evolution~of~$x$~when~${(x_1(0),x_2(0))^{\top}=(0.1,0.25)^{\top}}$.}]{ \includegraphics[width=0.35\textwidth,height=7pc]{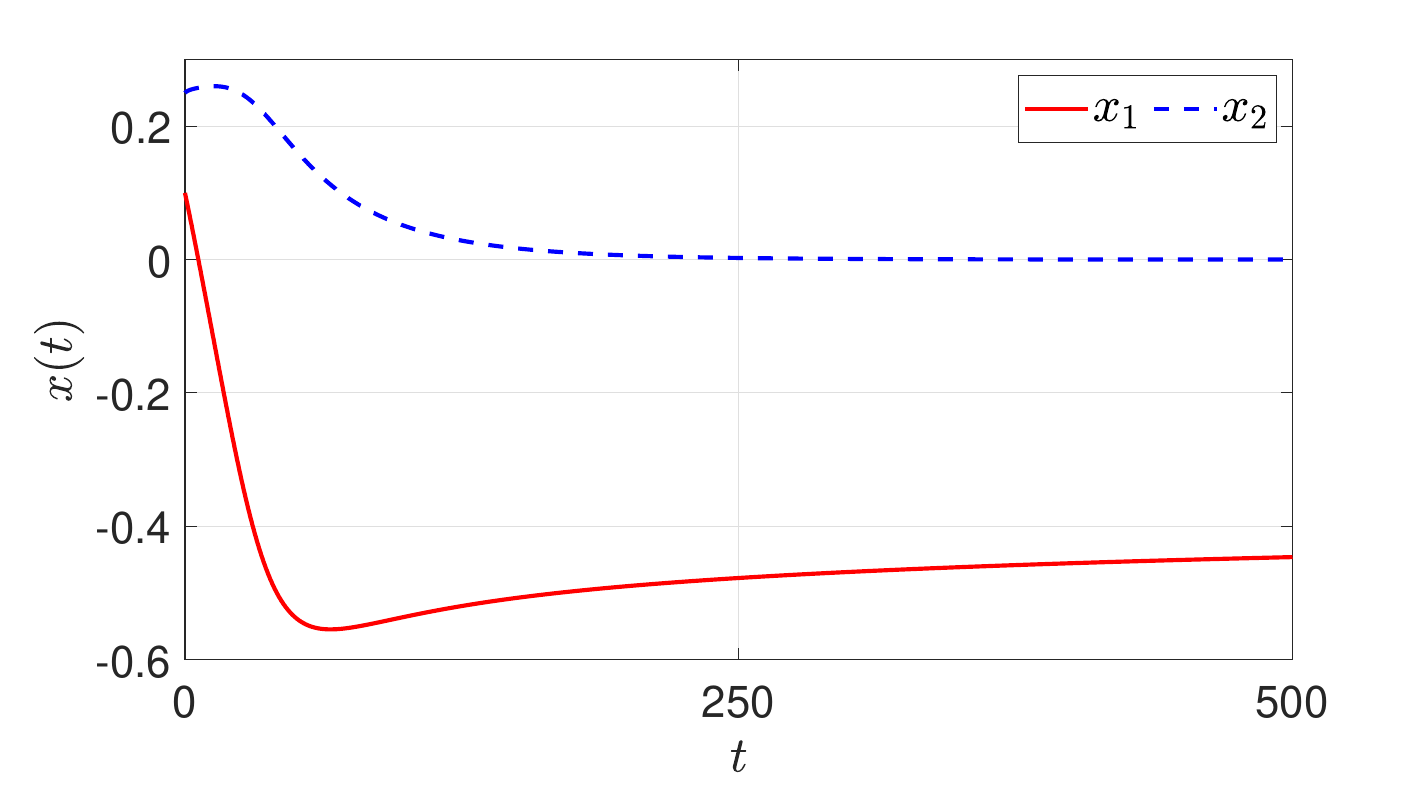}}\hspace{50pt}
			\subfigure[{Evolution~of~$x$~when~${(x_1(0),x_2(0))^{\top}=(0.2,0.5)^{\top}}$.}]{ \includegraphics[width=0.35\textwidth,height=7pc]{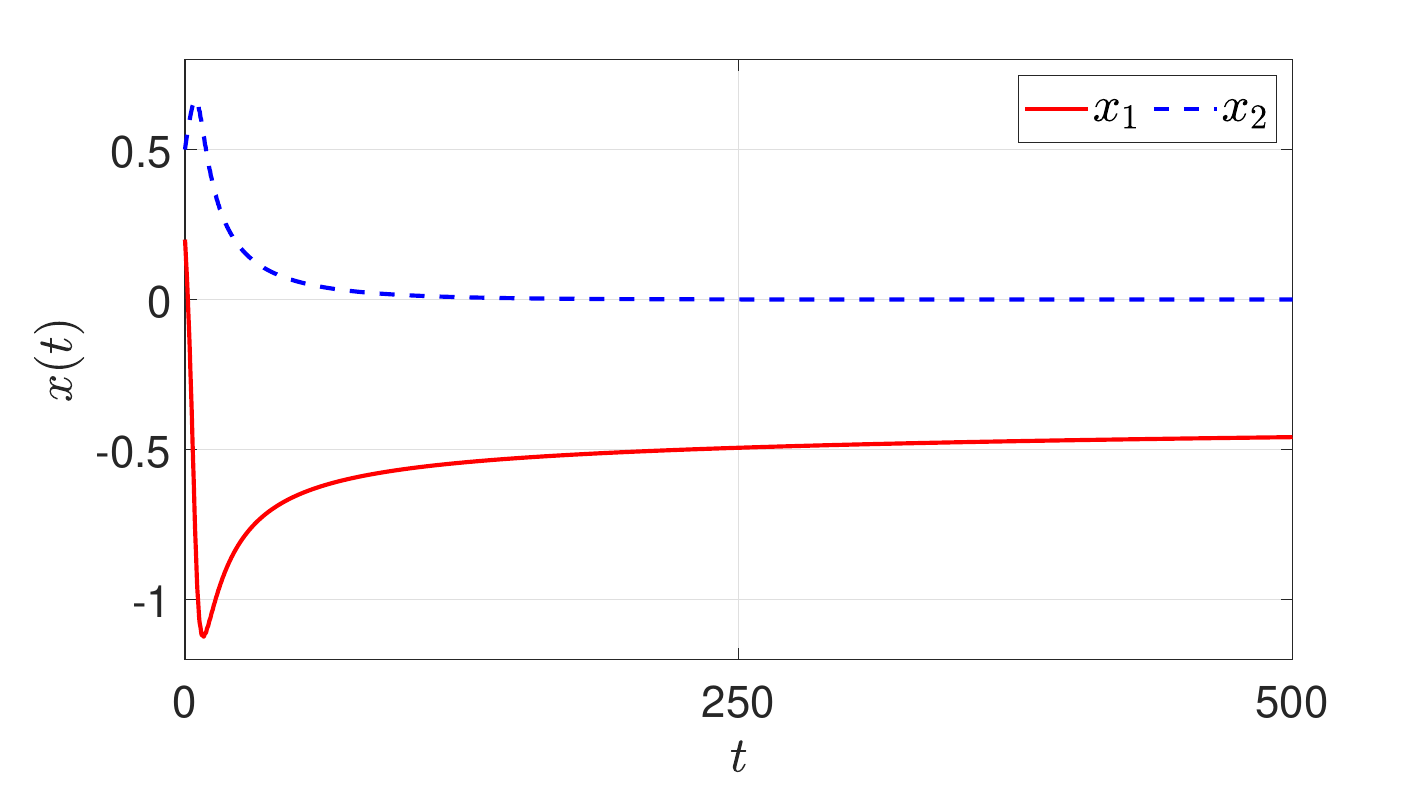}}
			\subfigure[{Evolution~of~${|x|}$~when~${(x_1(0),x_2(0))^{\top}=(0.1,0.25)^{\top}}$.}]{ \includegraphics[width=0.35\textwidth,height=7pc]{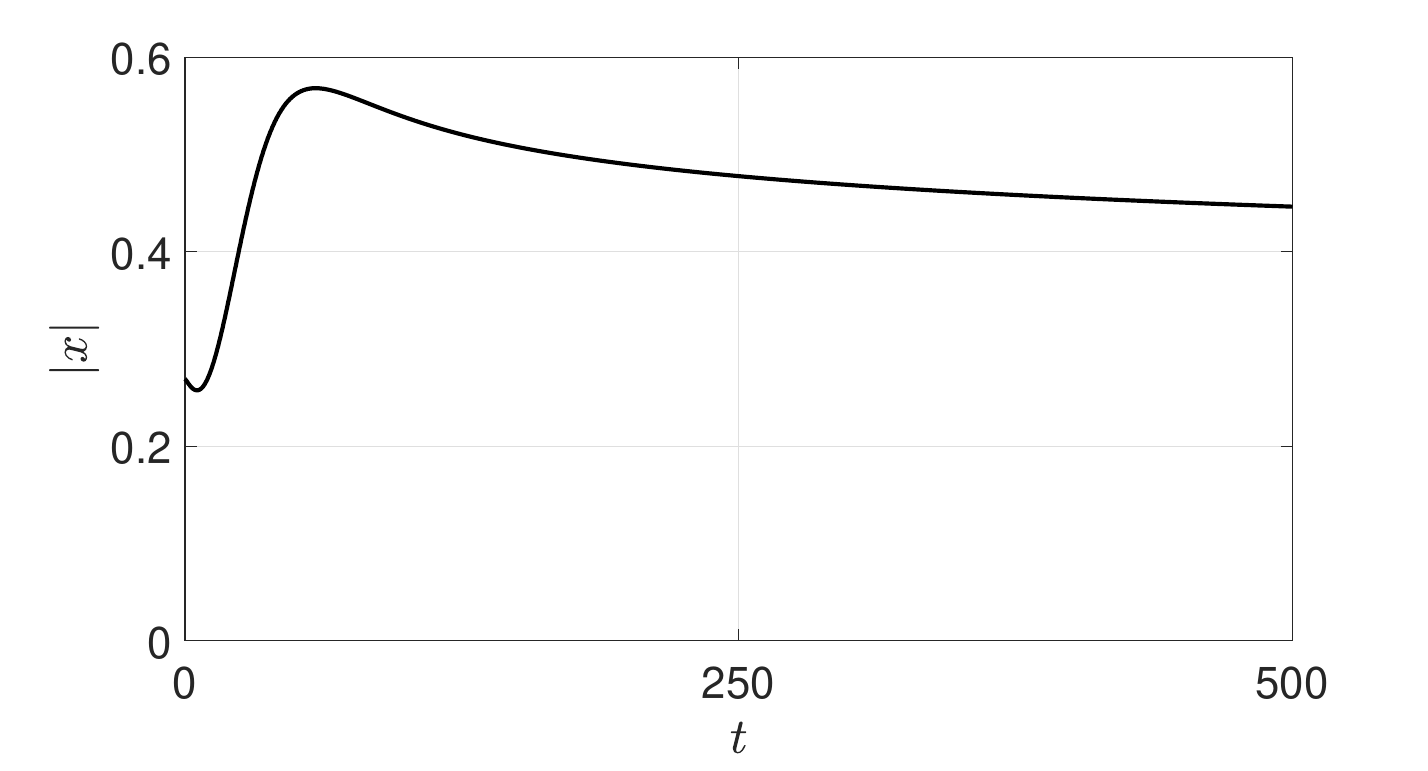}}\hspace{50pt}
			\subfigure[{Evolution~of~${|x|}$~when~${(x_1(0),x_2(0))^{\top}=(0.2,0.5)^{\top}}$.}]{ \includegraphics[width=0.35\textwidth,height=7pc]{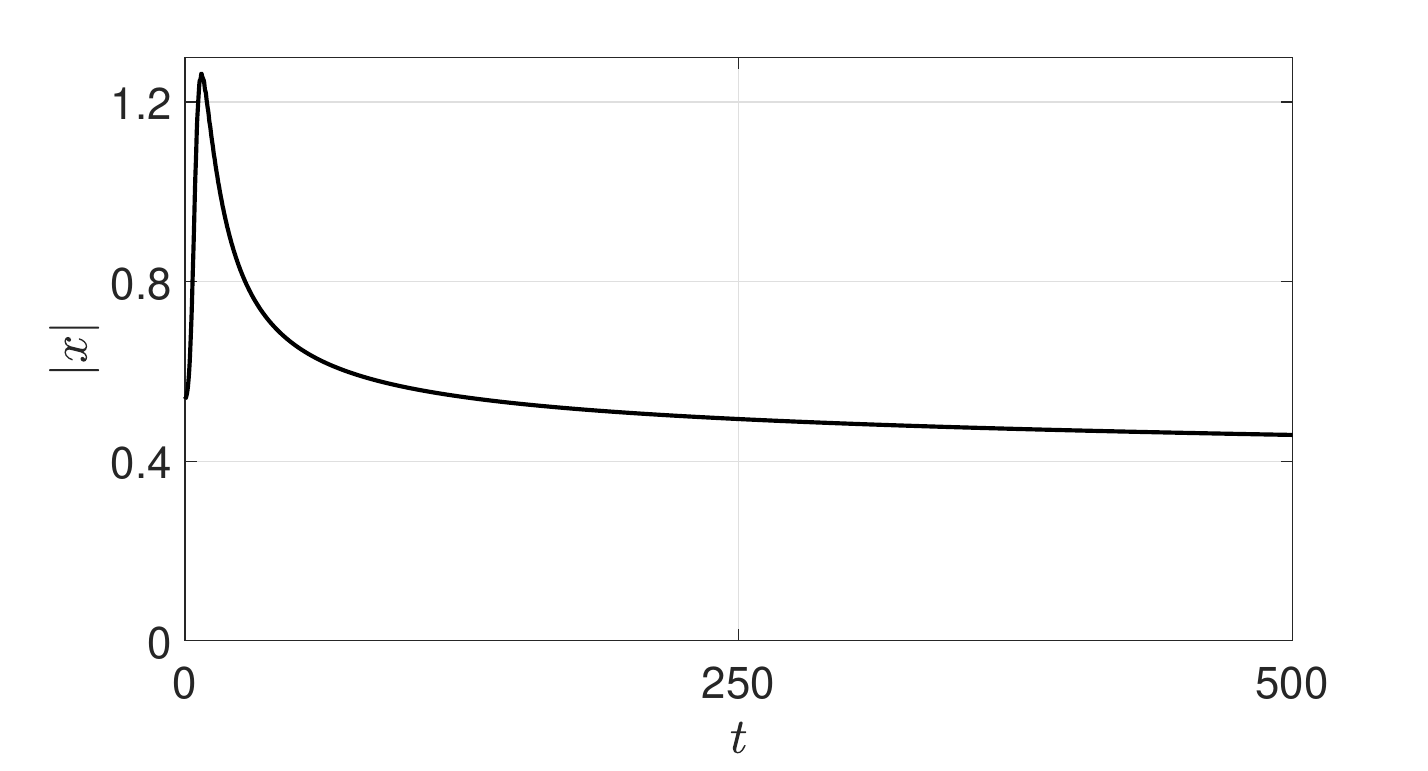}}
			\caption{Evolution of     $x$ and ${|x|}$ for system \eqref{nonlinear ODE}  in open loop with different small initial data. \label{fig1}}
		\end{center}
	\end{figure*}

\begin{figure*}[htpb!]
	\begin{center}
		\subfigure[{Evolution~of~$x$~when~${(x_1(0),x_2(0))^{\top}=(0.1,0.25)^{\top}}$.}]{
			\includegraphics[width=0.35\textwidth,height=7pc]{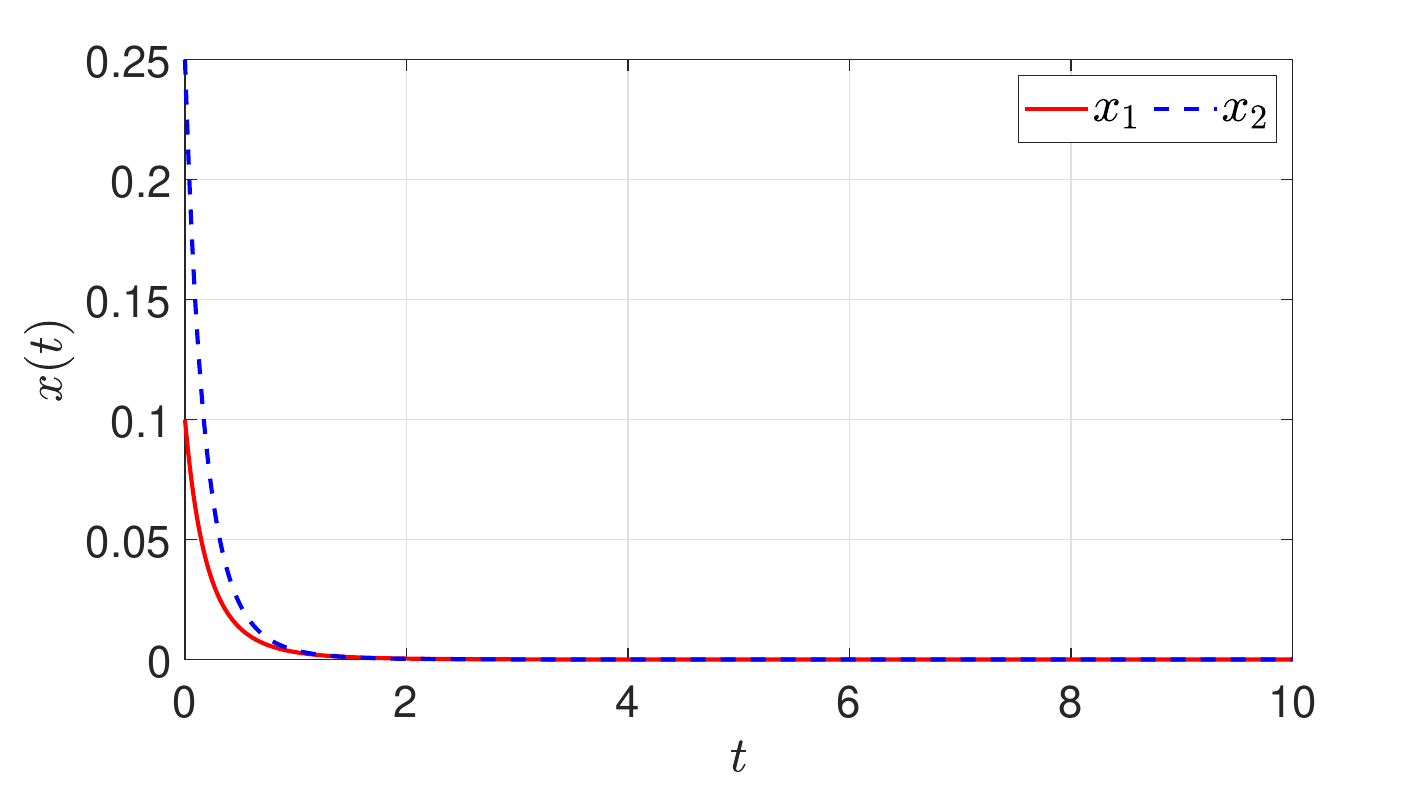}}\hspace{50pt}
		\subfigure[{Evolution~of~$x$~when~${(x_1(0),x_2(0))^{\top}=(0.2,0.5)^{\top}}$.}]{
			\includegraphics[width=0.35\textwidth,height=7pc]{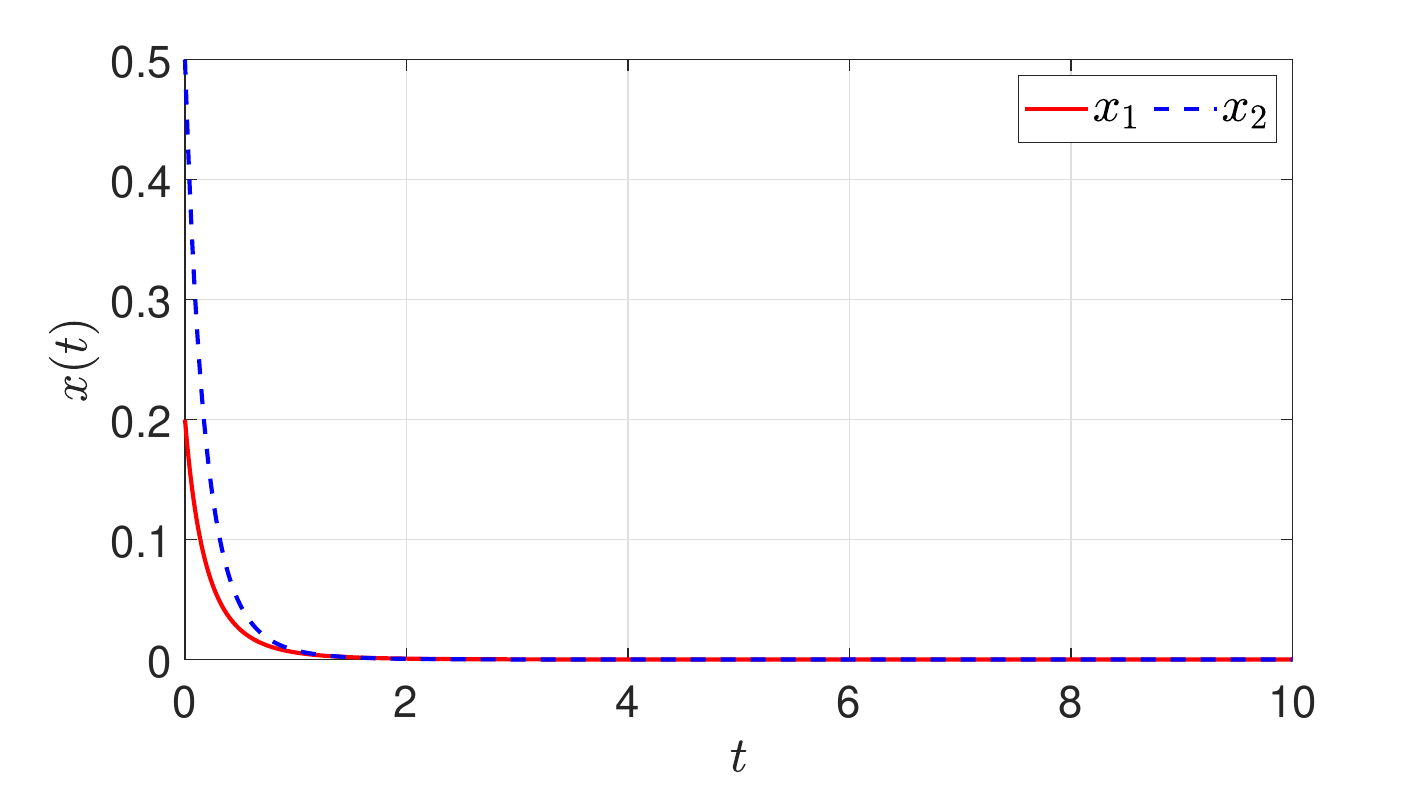}}
		\caption{{Evolution of $x$} for     system \eqref{nonlinear ODE} under the control law \eqref{control law}   with different small initial data.\label{fig2}}
	\end{center}
\end{figure*}

\begin{figure*}[htpb!]
	\begin{center}
		\subfigure[{Evolution of $x$ when $A=3$.}]{
			\includegraphics[width=0.35\textwidth,height=7pc]{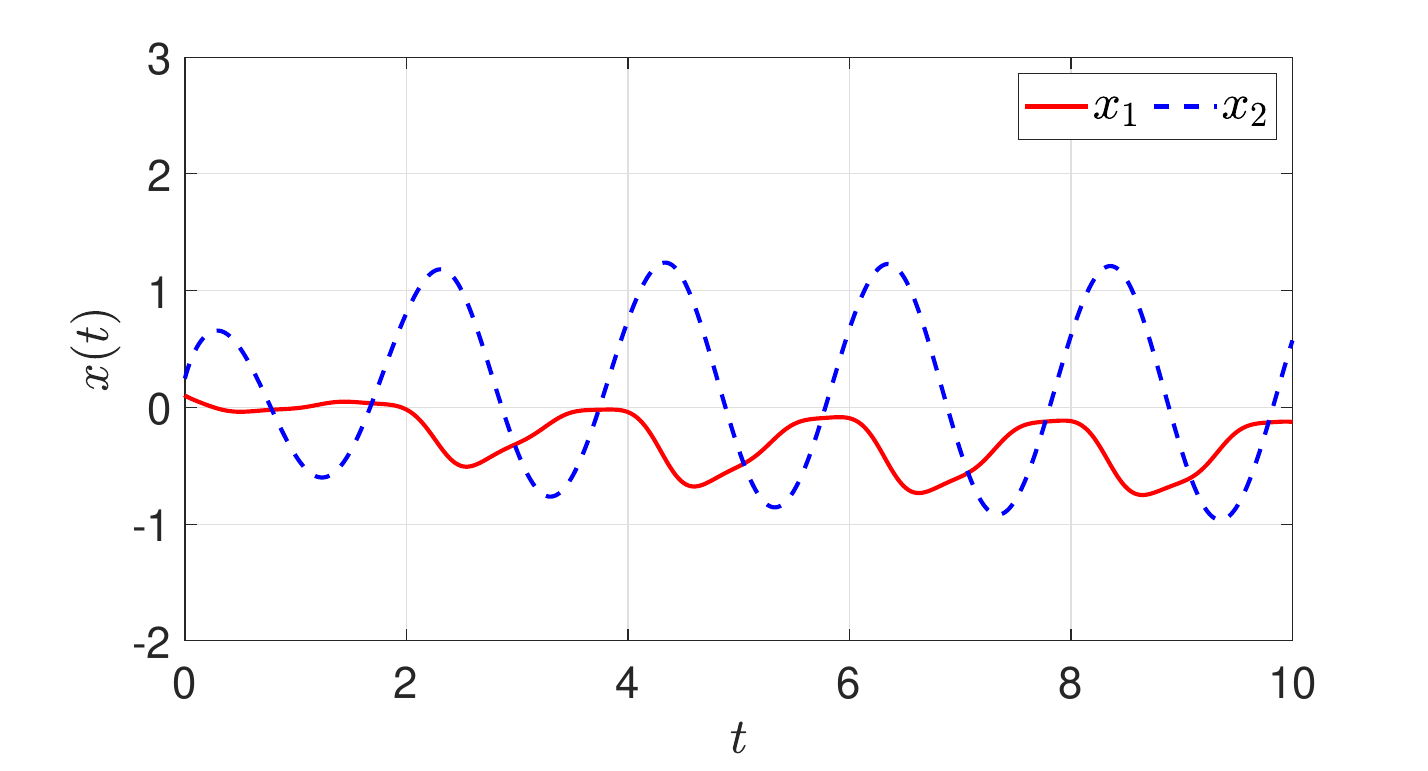}}\hspace{50pt}
		\subfigure[{Evolution of $x$ when $A=4$.}]{
			\includegraphics[width=0.35\textwidth,height=7pc]{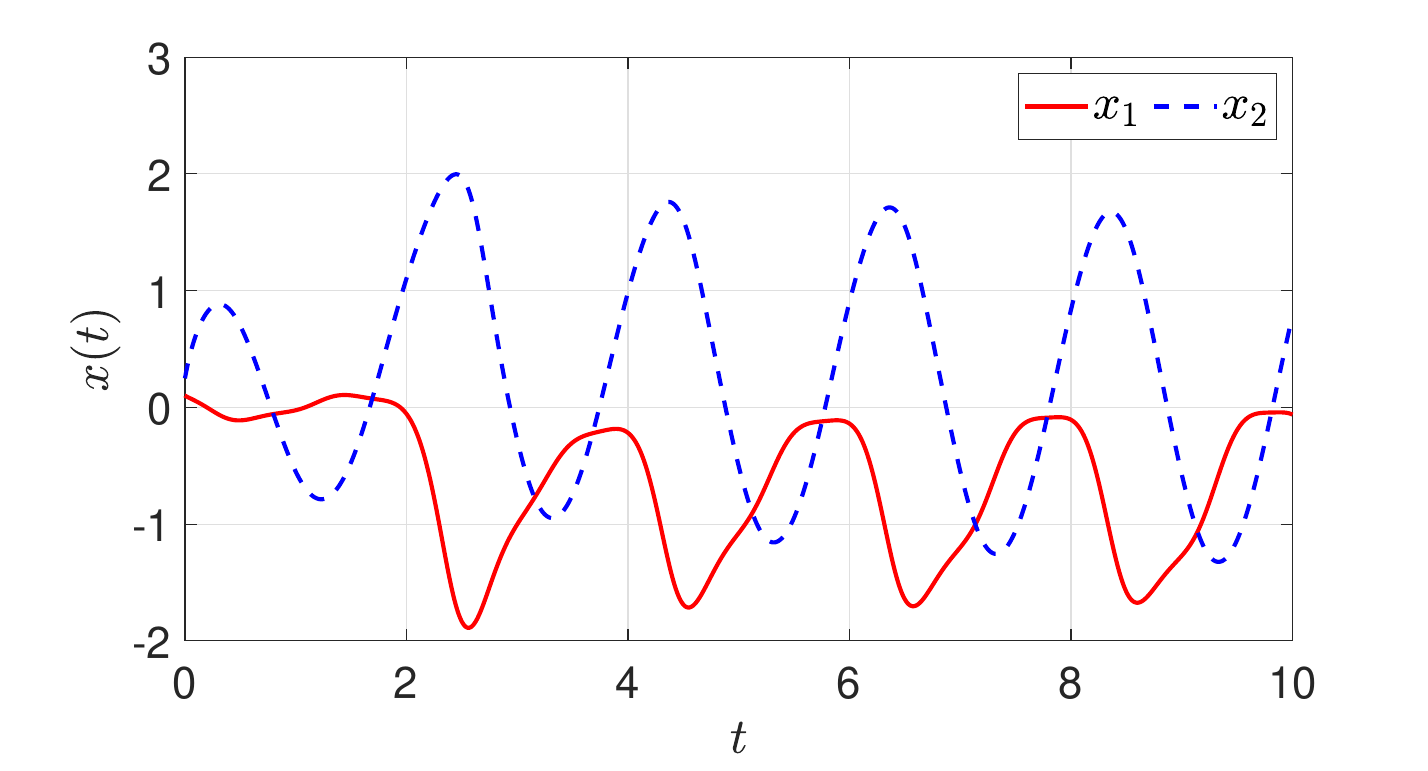}}
		\caption{Evolution of $x$  for system \eqref{nonlinear ODE-closed loop} with different disturbances when   ${(x_1(0),x_2(0))^{\top}=(0.1,0.25)^{\top}}$.\label{fig3}}
	\end{center}
\end{figure*}

\begin{figure*}[htpb!]
	\begin{center}
		\subfigure[{Evolution of $x$ when $A=3$.}]{
			\includegraphics[width=0.35\textwidth,height=7pc]{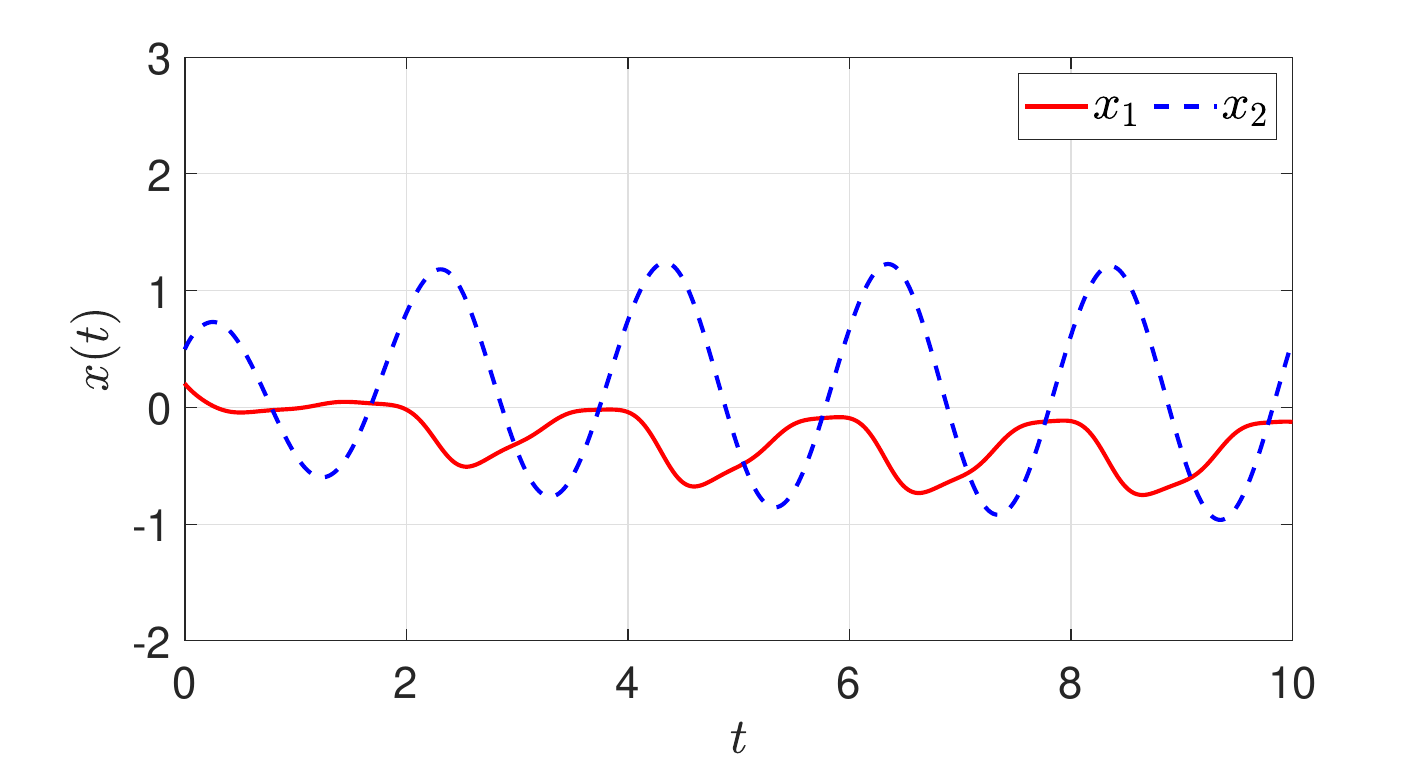}}\hspace{50pt}
		\subfigure[{Evolution of $x$ when $A=4$.}]{
			\includegraphics[width=0.35\textwidth,height=7pc]{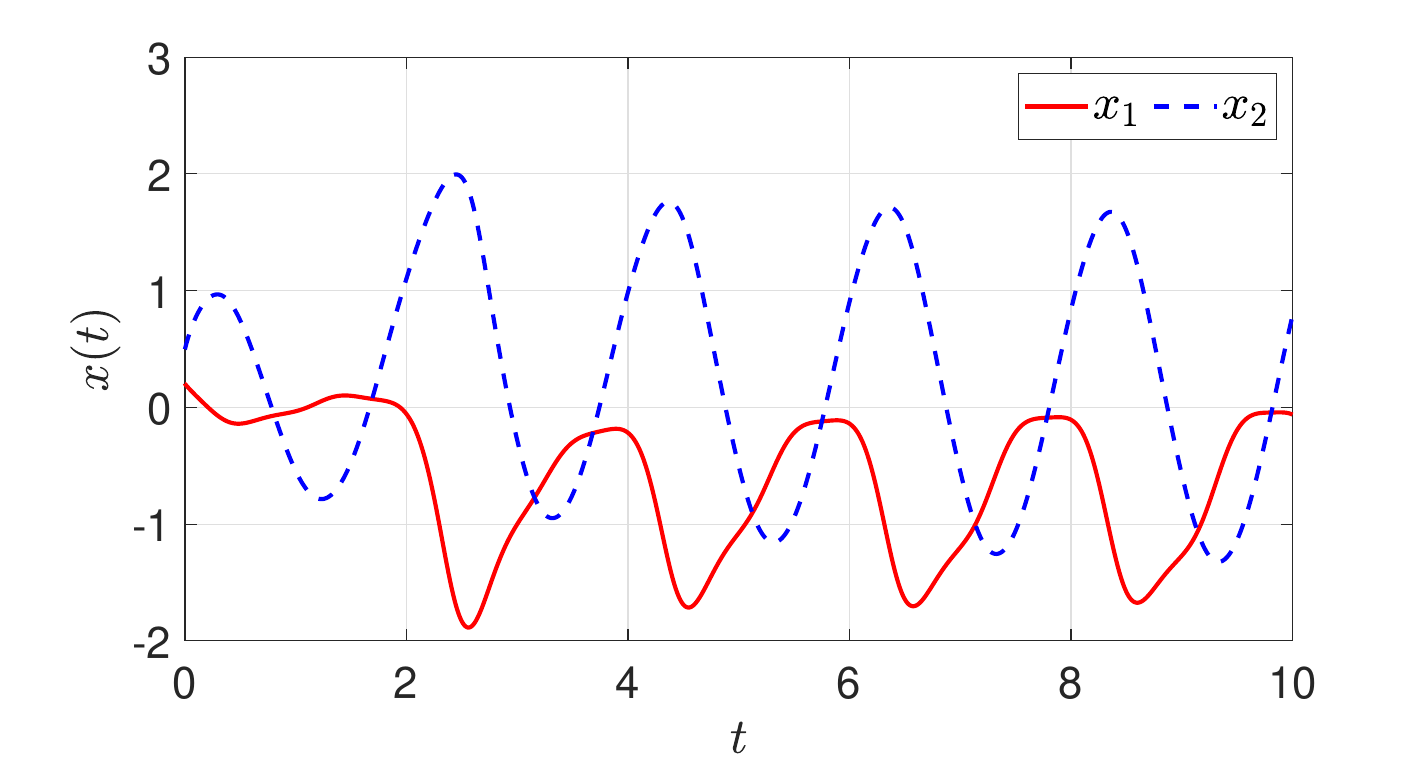}}
		\caption{Evolution of $x$  for system \eqref{nonlinear ODE-closed loop} with different disturbances when ${(x_1(0),x_2(0))^{\top}=(0.2,0.5)^{\top}}$.\label{fig3'}}
	\end{center}
\end{figure*}

\begin{figure*}[htpb!]
	\begin{center}
		\includegraphics[width=0.35\textwidth,height=7pc]{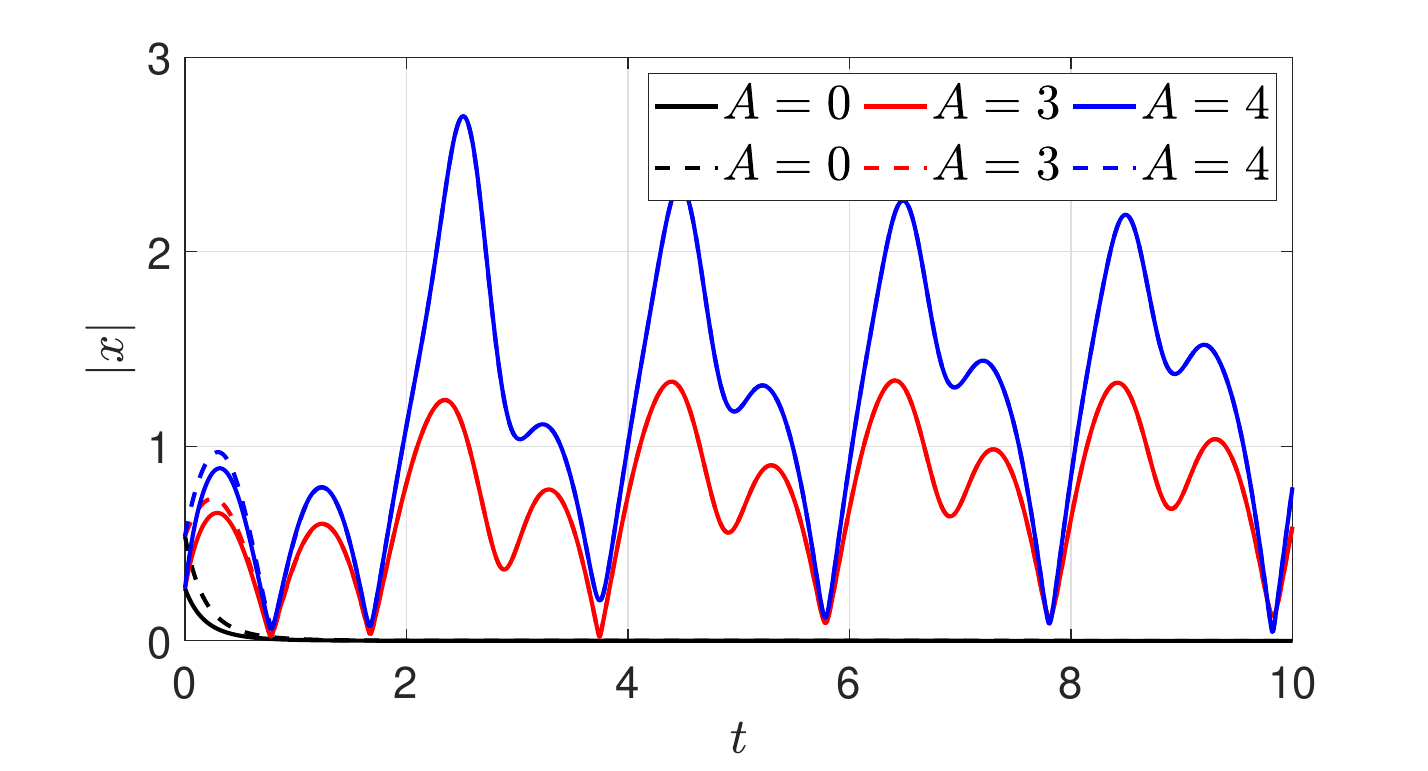}
		\caption{Evolution of   ${|x|}$ for system \eqref{nonlinear ODE-closed loop} with different disturbances and relatively small initial data:  solid {curve} for  ${(x_1(0),x_2(0))^{\top}=(0.1,0.25)^{\top}}$ and dashed {curve} for  ${(x_1(0),x_2(0))^{\top}=(0.2,0.5)^{\top}}$.\label{fig4}}
	\end{center}
\end{figure*}

\begin{figure*}[htpb!]
	\centering
	\subfigure[{Evolution of $x$.}]{ \includegraphics[width=0.35\textwidth,height=7pc]{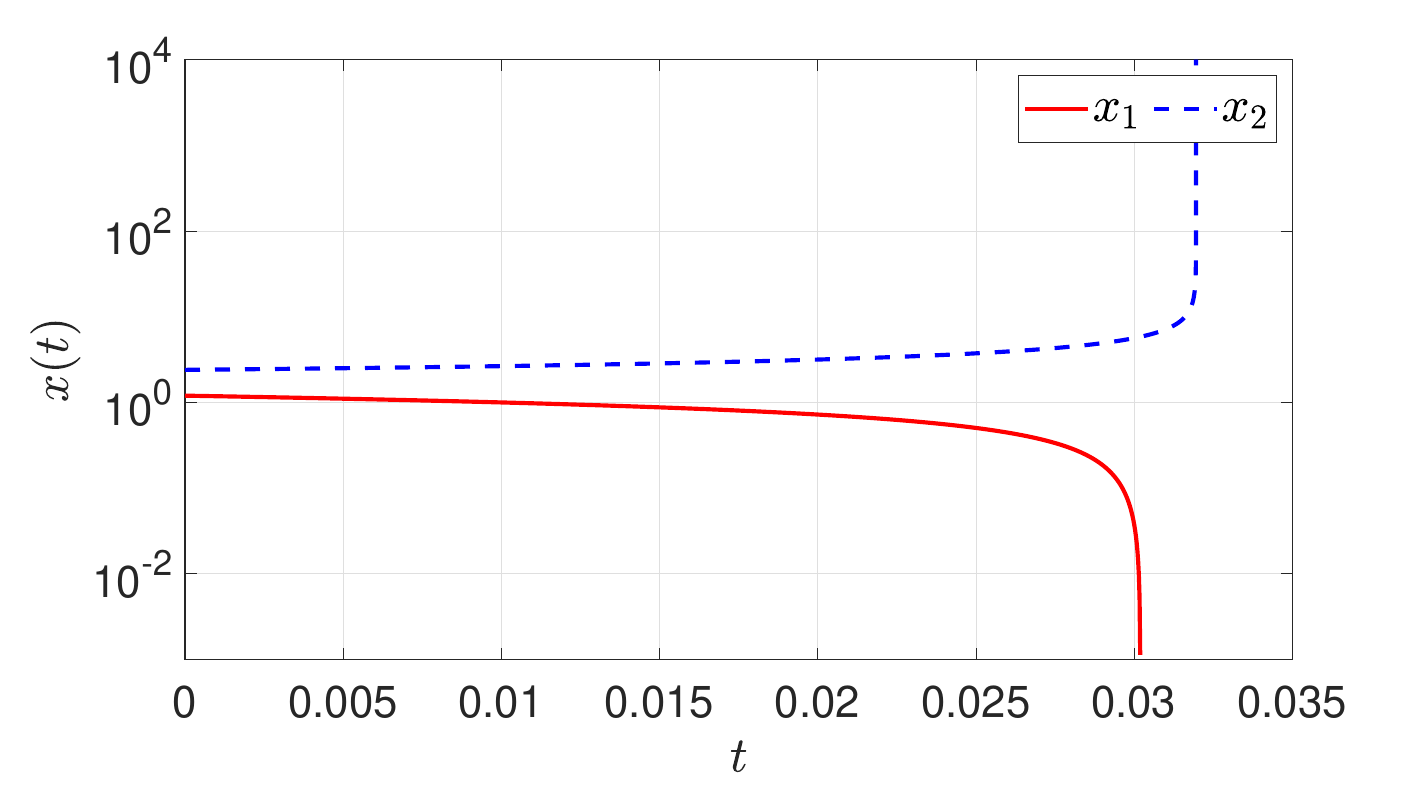}}\hspace{50pt}	\subfigure[{Evolution of ${|x|}$.}]{ \includegraphics[width=0.35\textwidth,height=7pc]{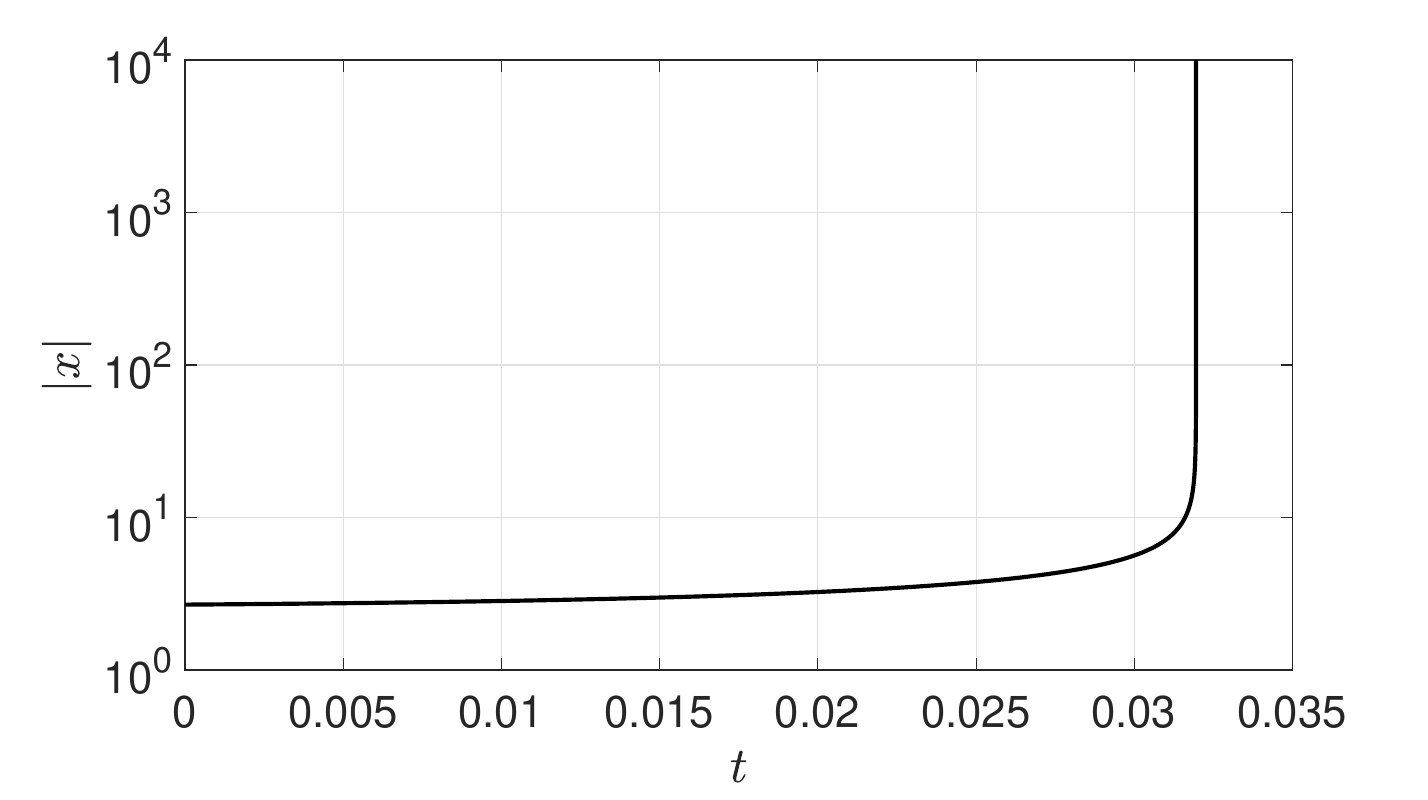}}
	\caption{ Evolution of $x$ and ${|x|}$ for
		system \eqref{nonlinear ODE-closed loop} in the absence of disturbances with relatively large initial data
		${(x_1(0),x_2(0))^{\top}=(1.2,2.4)^{\top}}$.\label{fig5}}
\end{figure*}

\subsection{LiISS of time-varying parabolic PDEs with  superlinear   terms}\label{sec:5.2}


In this section, as an application of the LiISS Lyapunov theorem, we  investigate  the LiISS of   $N$-dimensional parabolic PDEs with time-varying coefficients and superlinear terms in the following form:
\begin{subequations}\label{4.4}
	\begin{align}
		\frac{\partial x}{\partial t}(\xi,t)-{\rm div}\ (a(\xi,t)\nabla x(\xi,t))+c(\xi,t)x(\xi,t)= & u(\xi,t)+h(\xi,t,x(\xi,t)),(\xi,t)\in \Omega\times\mathbb{R}_{\geq 0},\label{3.1a}\\
		a(\xi,t)\frac{\partial x }{\partial \nu} (\xi,t)= & 0,(\xi,t)\in\partial \Omega\times \mathbb{R}_{\geq 0},\\
		x(\xi,0)= & x_0(\xi),\xi\in\Omega,
	\end{align}
\end{subequations}
where $\Omega $ is an open bounded domain in $\mathbb{R}^N  (N\in\mathbb{N}) $ with $C^1$-continuous boundary $\partial \Omega$,    $a\in C (\Omega\times\mathbb{R}_{\geq 0};\mathbb{R}_{>0})$ and $c\in C(\Omega\times\mathbb{R}_{\geq 0};\mathbb{R}_{>0})$ are  functions depending on space-time variables,    $u\in C (\Omega\times\mathbb{R}_{\geq 0};\mathbb{R})$ represents in-domain disturbances, $h\in C(\Omega\times\mathbb{R}_{\geq 0}\times\mathbb{R};\mathbb{R})$ is a nonlinear function having superlinear growth w.r.t. the third variable,    $\operatorname{div}$ denotes the standard divergence operator, and $\nu$ is the outer unit normal vector.

Assume that
\begin{align*}
	a(\xi, t)\geq &\underline{a}   ,\forall (\xi, t)\in{\Omega}\times\mathbb{R}_{\geq 0}, \\
	c(\xi, t)\geq &   \underline{c}  , \forall (\xi, t)\in{\Omega}\times\mathbb{R}_{\geq 0}, \\
	\left|h(\xi, t, x)\right|\leq & M( \left|x\right|^{m_1}+  \left|x\right|^{m_2}), \forall (\xi, t,x)\in{\Omega}\times\mathbb{R}_{\geq 0}\times \mathbb{R},
\end{align*}
where  $\underline{a} ,\underline{c},M,m_1$, {and $m_2$} are positive constants, among which $m_1$ and $m_2$  satisfy either of the following  conditions:

\textbf{(H1)} For $1\leq N\leq 2$,  assume that $1<m_i<+\infty $,  $i=1,2$;

\textbf{(H2)} For $N>2$, assume that $ {1<m_i\leq \frac{N}{N-2}}, i=1, 2$.

Let $ X =L^{2}(\Omega),  U=C(\Omega)$. For all $t\in\mathbb{R}_{\geq 0}$, introducing the operator $A(t):D(A)\subset X \rightarrow X $ by $A(t)x:=  {\rm div}(a\nabla x)$, where
\begin{align*}
	D(A(t)):=\left\{x\in  X \Big|~{\rm div}(a\nabla x)\  \text{exists\ in}\ \Omega\times\mathbb{R}_{\geq 0}, \  a(\xi, t)\frac{\partial x }{\partial {\nu}}(\xi,t)  =0\ \text{on}\ \partial \Omega\times \mathbb{R}_{\geq 0}\right\}.
\end{align*}
{Let $F(t,x,u)=-c(\xi,t)x+h(\xi,t,x)+u$.
	It is easy to write   system~\eqref{4.4} in an abstract form as \eqref{systems}. Moreover, for $X_{\frac{1}{2}} =W^{1,2}(\Omega)$,  noting that    $W^{1,2}(\Omega)\subset L^{q}(\Omega)$  for any $q\in [1,+\infty)$ and $N=1$ or $N=2$, and that $W^{1,2}(\Omega)\subset L^{\frac{2N}{N-2}}(\Omega)$ for $N>2$,  each of the conditions \textbf{(H1)} and \textbf{(H2)} ensures that $|x|^{m_i}\in L^{2}(\Omega), i=1,2,$ for $x\in X_{\frac{1}{2}}$. Therefore,  for any fixed $t$, $F$ maps the state $x$ from $X_{\frac{1}{2}}$ to $X$.}
Furthermore, By \cite[Theorem 6.2 p.~457 and Theorem 7.4, p.~491]{Lady1968linear}, system~\eqref{4.4} admits at least one classical solution provided that $x_0\in D(A(0))$; see {\cite[\S 7, Chap. V]{Lady1968linear}}.

For the parabolic system~\eqref{4.4}, we have the following LiISS result.
\begin{proposition} \label{Prop.8}
	System~\eqref{4.4} admits an LiISS-LF and hence, is LiISS.
\end{proposition}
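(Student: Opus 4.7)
The plan is to exhibit an LiISS Lyapunov functional for system~\eqref{4.4} and then invoke Theorem~\ref{TLiISS}. I would take the natural quadratic candidate $V(t,x) = \frac{1}{2}\|x\|^{2}$, so that condition~(i) of Definition~\ref{DLiISS} holds trivially with $\alpha_{1}(s) = \alpha_{2}(s) = \frac{1}{2} s^{2}$. Differentiating $V$ along a classical solution of~\eqref{4.4} and applying Green's identity together with the Neumann-type boundary condition yields
\begin{align*}
\dot{V}(t,x) = -\int_{\Omega} a(\xi,t)|\nabla x|^{2}\, {\rm d}\xi - \int_{\Omega} c(\xi,t) x^{2}\, {\rm d}\xi + \int_{\Omega} x\, u\, {\rm d}\xi + \int_{\Omega} x\, h(\xi,t,x)\, {\rm d}\xi.
\end{align*}
The uniform bounds $a \geq \underline{a}$ and $c \geq \underline{c}$ make the first two terms dominate $-\kappa\, \|x\|^{2}_{1,2}$ with $\kappa := \min\{\underline{a},\underline{c}\}$, while Cauchy--Schwarz and Young's inequality, together with the continuous embedding $\|u\| \leq C_{\Omega}\|u\|_{U}$ provided by the boundedness of $\Omega$, handle the input term as $\int_{\Omega} x u\, {\rm d}\xi \leq \frac{\underline c}{4}\|x\|^{2} + C_{1}\|u\|_{U}^{2}$.

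The decisive step is bounding the superlinear contribution
\begin{align*}
\left|\int_{\Omega} x\, h(\xi,t,x)\, {\rm d}\xi\right| \leq M\sum_{i=1}^{2}\|x\|^{m_{i}+1}_{m_{i}+1}.
\end{align*}
Under either \textbf{(H1)} or \textbf{(H2)}, the Sobolev compatibility $m_{i}+1 \leq \frac{2N}{N-2}$ (or any finite exponent when $N\leq 2$) is satisfied, so the Gagliardo--Nirenberg interpolation inequality provides
\begin{align*}
\|x\|^{m_{i}+1}_{m_{i}+1} \leq C\, \|x\|^{(m_{i}+1)\theta_{i}}_{1,2}\, \|x\|^{(m_{i}+1)(1-\theta_{i})},
\end{align*}
with $\theta_{i} \in (0,1)$ chosen so that $p_{i} := (m_{i}+1)\theta_{i} \in (0,2)$. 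Young's inequality then splits the right-hand side as $\varepsilon\|x\|^{2}_{1,2} + C_{\varepsilon} \|x\|^{q_{i}}$ with $q_{i} := \frac{2(m_{i}+1)(1-\theta_{i})}{2-p_{i}} > 2$. Choosing $\varepsilon$ sufficiently small, the $\|x\|^{2}_{1,2}$ contribution is absorbed into the coercive $-\kappa\|x\|^{2}_{1,2}$, leaving a residue that depends only on $\|x\|$ and $\|u\|_{U}$.

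Assembling these estimates yields
\begin{align*}
\dot{V}(t,x) \leq -\tilde\alpha\, \|x\|^{2} + \tilde\beta\bigl(\|x\|^{q_{1}}+\|x\|^{q_{2}}\bigr) + \phi(\|u\|_{U}),
\end{align*}
with positive constants $\tilde\alpha,\tilde\beta$, exponents $q_{1},q_{2}>2$, and $\phi\in\mathcal{K}$. Taking $g_{1}(t)\equiv\tilde\alpha$, $g_{2}(t)\equiv\tilde\beta$, $\theta_{1}(s)=s^{2}$, and $\theta_{2}(s)=s^{q_{1}}+s^{q_{2}}$ verifies condition~(ii) of Definition~\ref{DLiISS} on an interval $[0,R']$ for sufficiently small $R'>0$, since $q_{1},q_{2}>2$ ensures $\theta_{2}(s)\leq\theta_{1}(s)$ there. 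Hence $V$ is an LiISS-LF, and Theorem~\ref{TLiISS} yields the LiISS of~\eqref{4.4}. The main obstacle is engineering the Gagliardo--Nirenberg exponent so that $(m_{i}+1)\theta_{i}<2$ across the full admissible range of $m_{i}$; in the borderline dimensions of \textbf{(H2)} this may require a two-stage interpolation, first via the Sobolev embedding $W^{1,2}(\Omega)\hookrightarrow L^{2N/(N-2)}(\Omega)$ and then via a H\"older-type interpolation of $L^{m_{i}+1}$ between $L^{2}$ and $L^{2N/(N-2)}$, together with careful bookkeeping of the Young constants. This is precisely the role that interpolation inequalities play in handling the superlinear term, as advertised in the introduction.
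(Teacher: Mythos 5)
Your proposal is correct and follows essentially the same route as the paper: the quadratic functional $V=\tfrac12\|x\|^2$, integration by parts with the Neumann condition, Young's inequality for the input term, and Gagliardo--Nirenberg (for $N\le 2$) or Sobolev embedding followed by H\"older interpolation (for $N>2$) to absorb the superlinear term into the coercive gradient term, leaving residues $\|x\|^{q_i}$ with $q_i>2$. The only cosmetic point is that Definition~\ref{DLiISS} requires $g_2(t)\le g_1(t)$, so rather than taking $g_1\equiv\tilde\alpha$, $g_2\equiv\tilde\beta$ you should take $g_1=g_2\equiv\tilde\alpha$ and fold the ratio $\tilde\beta/\tilde\alpha$ into $\theta_2$, exactly as the paper does via the constant $M$ in Theorem~\ref{Thm2}.
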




To prove Proposition \ref{Prop.8}, we need the following Sobolev embedding and interpolation inequalities, which are used to deal with superlinear terms in the LiISS analysis.
\begin{lemma}[{\cite[pp. 285, 313, 314, 119]{Brezis2011function}}] \label{interpolation}
	The following inequalities hold true:
	\begin{enumerate}
		\item[(i)] if $ 1\leq q<N$ and $v\in W^{1,q}(\Omega)$, then $\|v\|_{q^*}\leq C\|v\|_{1,q}$, where $q^*:=\frac{qN}{N-q}$ {and $C$ is a positive constant};
		\item[(ii)] if  $ 1\leq q\leq p< +\infty$, $r\geq N$, and $v\in W^{1,r}(\Omega)$, then $\|v\|_p\leq C\|v\|_q^{1-{\lambda}}\|v\|_{1,r}^{{\lambda}}$,
		where ${\lambda}=\frac{\frac{1}{q}-\frac{1}{p}} {\frac{1}{q}+\frac{1}{N}-\frac{1}{r}}\in [0,1)$ and   $C$ is a positive constant;
		\item[(iii)] if $ 1\leq q\leq p\leq r\leq +\infty$ and $v\in L^{r}(\Omega)$, then $\|v\|_p\leq \|v\|_q^{1-{\lambda}}\|v\|_{r}^{{\lambda}}$,
		where ${\lambda}\in [0,1]$ {satisfies} $\frac{1}{p}=\frac{{\lambda}}{r}+\frac{1-{\lambda}}{q}$.
	\end{enumerate}
\end{lemma}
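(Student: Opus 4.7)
The plan is to exhibit the quadratic LiISS-LF candidate $V(t,x)=\tfrac12\|x\|^2=\tfrac12\int_\Omega x^2\,d\xi$ and verify the two conditions of Definition~\ref{DLiISS}; LiISS then follows from Theorem~\ref{TLiISS} (equivalently, from Theorem~\ref{Thm2} with $P=\tfrac12 I$). Condition~(i) of Definition~\ref{DLiISS} holds immediately with $\alpha_1(s)=\alpha_2(s)=s^2/2$, so the substance is in condition~(ii).

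I first differentiate $V$ along a classical solution, use integration by parts in the divergence term together with the Neumann boundary condition, and apply the uniform lower bounds $a\geq\underline a$, $c\geq\underline c$ and the growth bound $|h|\leq M(|x|^{m_1}+|x|^{m_2})$, obtaining
\begin{align*}
\dot V \leq -\underline a\|\nabla x\|^2-\underline c\|x\|^2+M\bigl(\|x\|_{m_1+1}^{m_1+1}+\|x\|_{m_2+1}^{m_2+1}\bigr)+\Bigl|\int_\Omega xu\,d\xi\Bigr|.
\end{align*}
The input integral is handled by Cauchy--Schwarz and Young: $\bigl|\int_\Omega xu\bigr|\leq |\Omega|^{1/2}\|u\|_U\|x\|\leq \tfrac{\underline c}{2}\|x\|^2+\tfrac{|\Omega|}{\underline c}\|u\|_U^2$, which supplies the $\phi$ term with $\phi(s)\propto s^2$.

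The heart of the argument is dominating each $\|x\|_{m_i+1}^{m_i+1}$ by a small multiple of $\|\nabla x\|^2$ (to be absorbed into $-\underline a\|\nabla x\|^2$) plus a strictly superquadratic power of the $L^2$ norm $\|x\|$. For this I use Lemma~\ref{interpolation}. Hypotheses (H1)--(H2) are chosen precisely so that $m_i+1\leq\tfrac{2N}{N-2}$ (for $N>2$) lies in the Sobolev range for $W^{1,2}(\Omega)$; for $N\leq 2$ every exponent is admissible. Combining Lemma~\ref{interpolation}(i) (giving $W^{1,2}\hookrightarrow L^{2N/(N-2)}$) with Lemma~\ref{interpolation}(iii) applied with $q=2$, $r=\tfrac{2N}{N-2}$, $p=m_i+1$ produces
\begin{align*}
\|x\|_{m_i+1}^{m_i+1}\leq C\|x\|^{\alpha_i}(\|x\|+\|\nabla x\|)^{\beta_i},\qquad \alpha_i+\beta_i=m_i+1>2,\ \beta_i=\tfrac{N(m_i-1)}{2}.
\end{align*}
When $\beta_i<2$, a standard Young splitting with conjugate pair $(\tfrac{2}{\beta_i},\tfrac{2}{2-\beta_i})$ gives $\|x\|^{\alpha_i}\|\nabla x\|^{\beta_i}\leq\delta\|\nabla x\|^2+C_\delta\|x\|^{p_i}$ with $p_i=\tfrac{2\alpha_i}{2-\beta_i}>2$. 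Choosing $\delta$ small enough to keep the net coefficient of $\|\nabla x\|^2$ in $\dot V$ non-positive eliminates the gradient contribution, leaving
\begin{align*}
\dot V\leq -\tfrac{\underline c}{2}\|x\|^2+C'\sum_{i=1,2}\bigl(\|x\|^{p_i}+\|x\|^{m_i+1}\bigr)+\tfrac{|\Omega|}{\underline c}\|u\|_U^2,
\end{align*}
which matches Definition~\ref{DLiISS}(ii) with constant $g_1,g_2$, $\theta_1(s)=s^2$, $\theta_2$ a sum of strictly superquadratic powers (hence $\theta_2\leq\theta_1$ on some $[0,R']$), and $\phi(s)\propto s^2$.

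The main obstacle I anticipate is the borderline regime $\beta_i\geq 2$, which under (H2) can occur when $m_i>1+\tfrac{4}{N}$ (for instance $N=3$ with $m_i$ close to $3$). In this range the clean Young absorption above breaks down, and I expect to handle it by exploiting LiISS-locality: restricting attention to small initial data in $X$ ensures that the coefficient $\|x\|^{\alpha_i}$ in front of $\|\nabla x\|^{\beta_i}$ stays small along the trajectory, so $\|x\|^{\alpha_i}\|\nabla x\|^{\beta_i}$ can still be absorbed into $\underline a\|\nabla x\|^2$ at the cost of possibly shrinking the LiISS neighborhood radius $R$. This delicate, \emph{local} deployment of the Sobolev/interpolation machinery in Lemma~\ref{interpolation}, rather than any difficulty with the $\dot V$ computation itself, is where the main technical weight of the proof sits.
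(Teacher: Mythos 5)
Your proposal does not address the statement it is supposed to prove. The statement is Lemma~\ref{interpolation} itself: the three classical inequalities (i) the Sobolev embedding $W^{1,q}(\Omega)\hookrightarrow L^{q^*}(\Omega)$ for $q<N$, (ii) the Gagliardo--Nirenberg-type interpolation $\|v\|_p\leq C\|v\|_q^{1-\lambda}\|v\|_{1,r}^{\lambda}$, and (iii) the elementary $L^p$ interpolation $\|v\|_p\leq\|v\|_q^{1-\lambda}\|v\|_r^{\lambda}$. In the paper these are quoted from Br\'ezis's book without proof (the bracketed citation is the paper's entire justification). What you have written instead is a proof sketch of Proposition~\ref{Prop.8} --- the LiISS of the parabolic system~\eqref{4.4} --- in which Lemma~\ref{interpolation} appears only as an imported tool (``For this I use Lemma~\ref{interpolation}''). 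Nowhere do you establish any of the inequalities (i)--(iii): there is no extension/density argument, no H\"older computation yielding the exponent relation $\frac{1}{p}=\frac{\lambda}{r}+\frac{1-\lambda}{q}$, and no derivation of the Sobolev exponent $q^*=\frac{qN}{N-q}$. As a proof of the stated lemma, the attempt is therefore vacuous; every step of it presupposes the result to be proved.

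As a secondary remark, if your text is read as a blind proof of Proposition~\ref{Prop.8}, it does follow essentially the same route as the paper (the LF $V=\tfrac12\|x\|^2$, integration by parts, interpolation of $\|x\|_{m_i+1}^{m_i+1}$ between $L^2$ and $W^{1,2}$, and Young absorption of the gradient term). Your worry about the regime $\lambda_i(m_i+1)\geq 2$ is a substantive observation: under \textbf{(H2)} with $N=3$ one can have $m_i$ close to $3$, for which $\lambda_i(m_i+1)=\tfrac{N(m_i-1)}{2}>2$ and the Young splitting with exponents $\bigl(\tfrac{2}{\lambda_i(m_i+1)},\tfrac{2}{2-\lambda_i(m_i+1)}\bigr)$ used in \eqref{15} is no longer available, whereas the paper's proof applies it without restriction. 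But none of this bears on the lemma you were asked to prove, which remains unproved.
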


Using Lemma \ref{interpolation}, we can prove Proposition \ref{Prop.8}.

\begin{proof of PDE}
	Define
	\begin{align*}
		V(x):=\frac{1}{2}\|x(\cdot, t)\|^{2}=\frac{1}{2}\int_{\Omega}x^{2}(\xi, t){\rm d}\xi.
	\end{align*}
	
	By integrating by parts,  and using the Young's inequality and the structural conditions on $a,c,h$, we deduce that the derivative of $V$ along the trajectory satisfies
	\begin{align}
		\!\!{\dot{V}(x)}= &\int_{\Omega}x \left({\rm div}(a \nabla x )-c x +u +h \right){\rm d}\xi\notag\\
		%
		\!\!=&-\int_{\Omega}a|\nabla x|^{2}{\rm d}\xi  -\int_{\Omega} c x^2 {\rm d}\xi  +\int_{\Omega}  ux{\rm d}\xi +\int_{\Omega} h x{\rm d}\xi\notag\\
		\!\!\leq &-\underline{a} \|\nabla x\|^2  -  \underline{c}   \|x\|^2 +\frac{\varepsilon}{2} \|x\|^2 +\frac{1}{2\varepsilon}   \|u(\cdot,t)\|^2 + M\left(\|x\|_{m_1+1}^{m_1+1}+ \|x\|_{m_2+1}^{m_2+1}\right)\notag\\
		\!\!\leq &-\underline{a} \|\nabla x\|^2  -  \underline{c}   \|x\|^2 +\frac{\varepsilon}{2} \|x\|^2 +\frac{1}{2\varepsilon}    {\|u(\cdot,t)\|^2_{U}} + M\left(\|x\|_{m_1+1}^{m_1+1}+ \|x\|_{m_2+1}^{m_2+1}\right),\label{16}
	\end{align}
	where $\varepsilon $ is a positive constant to be determined later.
	
	
	Now we prove in two cases.
	
	\textbf{Case 1}:  $1\leq N\leq 2, 1<m_{i}<+\infty, i=1, 2$.
	
	Let
	\begin{align*}
		p=m_{i}+1, q=r=2, {\lambda_{i}}=N\left(\frac{1}{2}-\frac{1}{m_{i}+1}\right)\in(0, 1).
	\end{align*}
	
	By Lemma~\ref{interpolation} (ii) and  the Young's inequality,  we have
	\begin{align}
		\|x\|^{m_{i}+1}_{{m_{i}+1}}\leq &  C\|x\|^{(1-{\lambda_{i}})(m_{i}+1)}\|x\|_{1, 2}^{{\lambda_{i}}(m_{i}+1)}\notag\\
		= & C\|x\|^{(1-{\lambda_{i}})(m_{i}+1)} {(\|  x\|+\| \nabla x\|)^{{\lambda_{i}}(m_{i}+1)}}\notag\\
		\leq &  C' {\left( \|x\|^{ m_{i}+1 }+ \|x\|^{(1-{\lambda_{i}})(m_{i}+1)}\| \nabla x\| ^{{\lambda_{i}}(m_{i}+1)}\right)}\notag\\
		\leq &  C'  \left(\|x\|^{ m_{i}+1 } + C(\varepsilon) \|x\|^{{n_i}}+{\varepsilon}\| \nabla x\|^{2} \right)\notag\\
		\leq &   C'' \left(\|x\|^{ m_{i}+1 } +\|x\|^{{n_i}}\right)+ \varepsilon C' \| \nabla x\|^{2}, i=1, 2,\label{15}
	\end{align}
	{where   $C,C'$, {and $C''$} are positive   constants, $C(\varepsilon):={\varepsilon^{-\frac{\lambda_i(m_i+1)}{2-\lambda_i(m_i+1)}}}$, and $n_i:=\frac{2(1-{\lambda_{i}})(m_{i}+1)}{2-{\lambda_{i}}(m_{i}+1)}>2, i=1,2$.}
	
	Putting \eqref{15} into \eqref{16}, we get
	\begin{align*}
		{\dot{V}(x)}
		\leq &-{(\underline{a} -2M\varepsilon C')}\|\nabla x\|^2  - \left (\underline{c}  - \frac{\varepsilon}{2} \right)\|x\|^2 + C ''M \left(\|x\|^{ m_{1}+1 } +\|x\|^{{n_1}}\right)
		+ C ''M \left(\|x\|^{ m_{2}+1} +\|x\|^{{n_2}}\right) +\frac{1}{2\varepsilon}   \|u(\cdot,t)\|^2_U \notag\\
		\leq &   {- \left (\underline{c}  - \frac{\varepsilon}{2} \right)\|x\|^2 +C''M \left(\|x\|^{m} +\|x\|^{n} \right) +\frac{1}{2\varepsilon}   \|u(\cdot,t)\|^2_U},
	\end{align*}
	where  {$m:= \min\{ m_{1}+1,n_1,m_{2}+1,n_2\}>2,
		n:= \max\{ m_{1}+1,n_1,m_{2}+1,n_2\}>2$,} and we {chose} $ 0<\varepsilon<\min\left\{2\underline{c},{\frac{\underline{a}}{2MC'}}\right\}$.
	%
	%
	%

	Therefore, the conditions of Theorem \ref{Thm2} are fulfilled. We deduce  that $V( {x})=\|x\|^{2}$ is {an LiISS-LF}. Hence, system~\eqref{4.4} is LiISS.

	\textbf{Case 2}:  $N > 2$,  $2 < m_{i} + 1 < 2^{*}$,  $i = 1$, $2$.
	
	Let
	\begin{align*}
		p=m_{i}+1, q=2, r=2^{*}, {\lambda_{i}}=\frac{\frac{1}{2}-\frac{1}{m_{i}+1}}{\frac{1}{2}-\frac{1}{2^{*}}}\in(0, 1).
	\end{align*}

	{Note that $2<m_i+1\leq \frac{2N-2}{N-2}< 2^*$.} Analogous to the proof of \eqref{15}, using   Lemma~\ref{interpolation}  (iii) and (i),  and the Young's inequality with $\varepsilon>0$, we obtain
	\begin{align*}
		\|x\|^{m_{i}+1}_{{m_{i}+1}}\leq &  C\|x\|^{(1-{\lambda_{i}})(m_{i}+1)}\|x\|_{2^{*}}^{{\lambda_{i}}(m_{i}+1)}\\
		\leq &  C'\|x\|^{(1-{\lambda_{i}})(m_{i}+1)}\|x\|_{1, 2}^{{\lambda_{i}}(m_{i}+1)}\\
		= & C'\|x\|^{(1-{\lambda_{i}})(m_{i}+1)}{(\|  x\|+\| \nabla x\|)^{{\lambda_{i}}(m_{i}+1)}}\notag\\
		\leq &   C'' \left(\|x\|^{ m_{i}+1 } +\|x\|^{{n_i}}\right)+ \varepsilon C'' \| \nabla x\|^{2}, i=1, 2,
	\end{align*}
	{where   $C,C'$, {and $C''$} are positive   constants    and $n_i:=\frac{2(1-{\lambda_{i}})(m_{i}+1)}{2-{\lambda_{i}}(m_{i}+1)}>2, i=1,2$.}
	
	Then, as in   Case 1, we can prove that   $V( {x})=\|x\|^{2}$ is an LiISS-LF of system~\eqref{4.4}. Therefore, system~\eqref{4.4} is LiISS. 
\end{proof of PDE}

\textbf{Numerical results}\quad In simulations, for system \eqref{4.4}, we set
\begin{align*}
	N=&1,\\
	a(\xi,t)=&\sqrt{e^{-t}+0.5\sin(\pi\xi)+0.5}, \\
	c(\xi,t)=&1+\pi\sqrt{\sin\xi+e^{-t}+1}, \\
	h(\xi,t,x)=&\frac{\sqrt{1+\sin\xi}}{1+t}x^2,\\
	u(\xi,t)=&A_1\sin(10t+x),
\end{align*}
where   $A_1=\left\{0,1,3\right\}$ are used to describe the amplitude of disturbances.


\begin{figure*}[htpb!]
	\begin{center}
		\subfigure[{Evolution of $x$.}]{
			\includegraphics[width=0.35\textwidth]{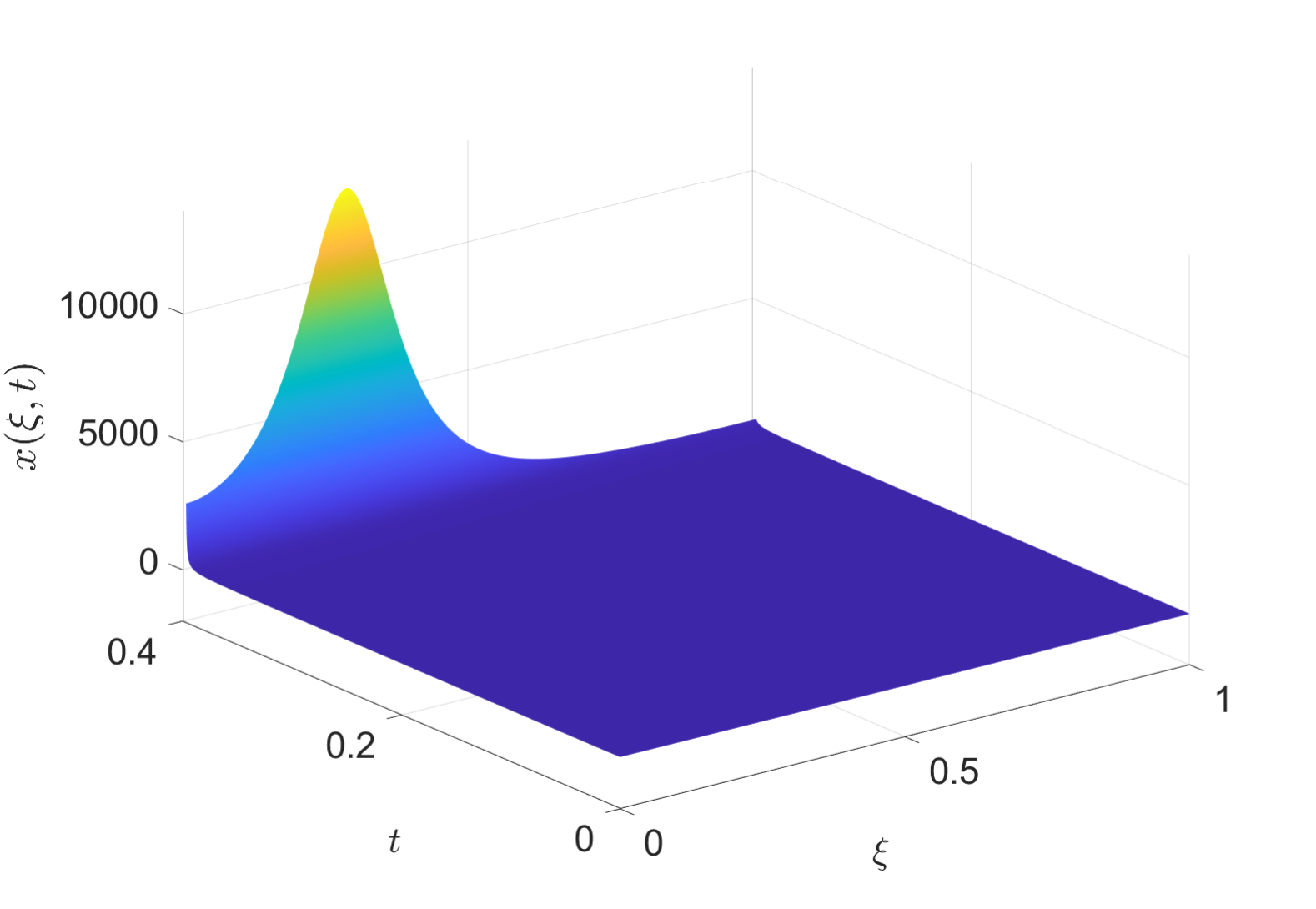}}\hspace{50pt}
		\subfigure[{Evolution of $\|x\|$.}]{ \includegraphics[width=0.35\textwidth,height=7pc]{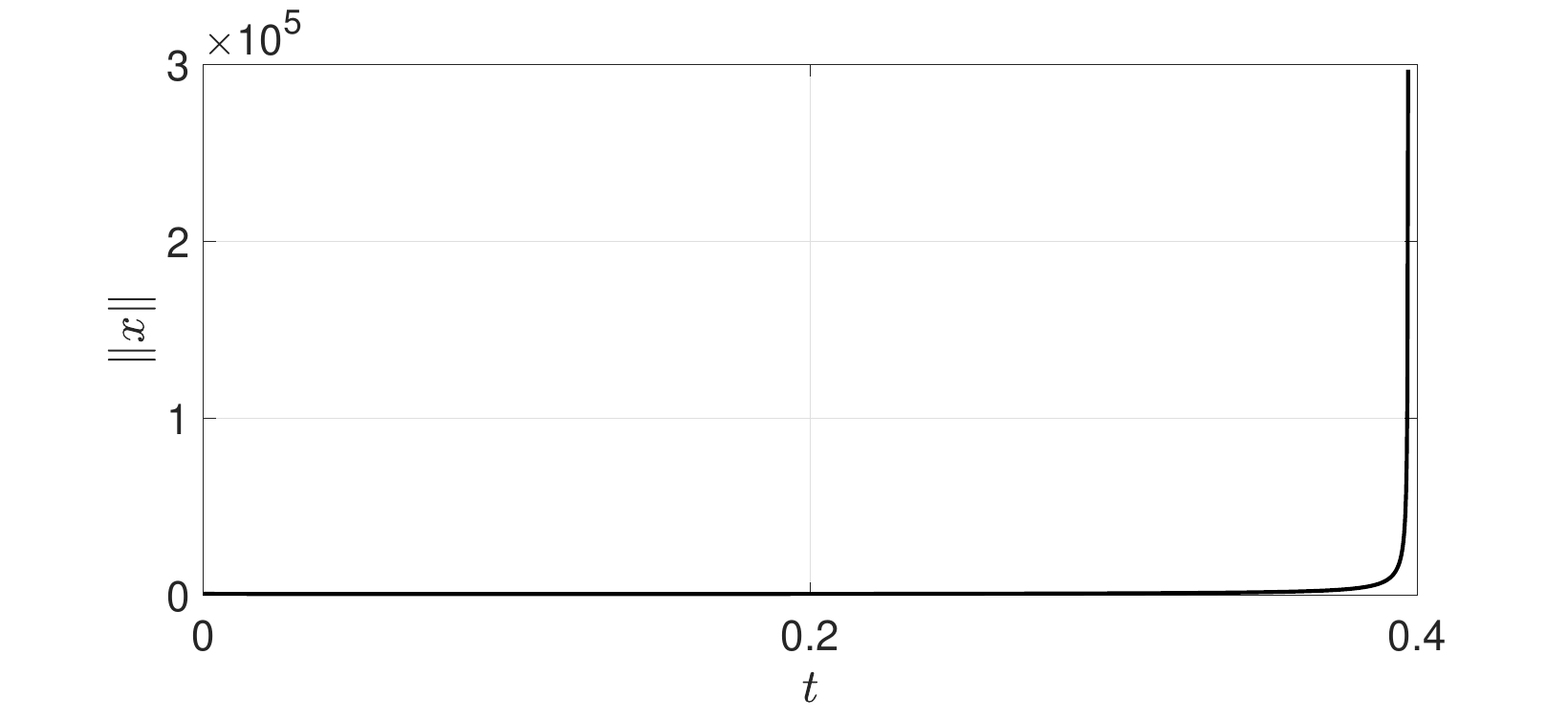}}
		\caption{Evolution of $x$   for system \eqref{4.4} {in the presence of disturbances with  large initial data $x_0=23(\xi-0.5)(\xi-2)$.}\label{fig6}}
	\end{center}
\end{figure*}

{{Figure \ref{fig6} illustrates that,} for relatively large initial data, the  {solution to system \eqref{4.4}} increase rapidly in  finite time, indicating  {system \eqref{4.4} may blow up}. Figure \ref{fig6`}  shows that, {for  relatively} small initial data, the {disturbance-free} system \eqref{4.4}   {is asymptotically stable at the origin}. Meanwhile, 
{in the presence of different disturbances, Fig.  \ref{fig7}, Fig. \ref{fig8}, and Fig. \ref{fig9} demonstrate that the solution {to} system \eqref{4.4} remains bounded for small initial data under different disturbances. Especially, Fig. \ref{fig7}(a) and (b)(or Fig.~\ref{fig8}(a) and (b)) show that, for the same small initial data, the  amplitude of the states of system \eqref{4.4} decreases as the amplitude of the disturbances diminishes. A comparison between Fig. \ref{fig7}(a) and Fig. \ref{fig8}(a) (or between Fig. \ref{fig7}(b) and Fig.\ref{fig8}(b)) shows that, for the same disturbance, the amplitude of the states of system \eqref{4.4} decreases as the initial datum becomes small. Figure \ref{fig9} shows the evolution of the state's norm of system \eqref{4.4} under different disturbances and initial data. In Fig. \ref{fig9}, all solid and dashed curves indicate that, for sufficiently large $t$, the ultimate bound of the state's norm is determined by the disturbances rather than the initial data. Thus, despite different initial data, the norms of the states admit    almost the same bound  when the time is sufficiently large, if the disturbances are identical. Overall, Fig. \ref{fig6}, Fig. \ref{fig6`}, Fig. \ref{fig7}, Fig\ref{fig8}, and Fig \ref{fig9} collectively well illustrate the LiISS property of system \eqref{4.4}.}

\begin{figure*}[htpb!]
	\begin{center}
		\subfigure[{Evolution of $x$ when $x_0=0.5(\xi-0.5)(\xi-2)$.}]{
			\includegraphics[width=0.35\textwidth]{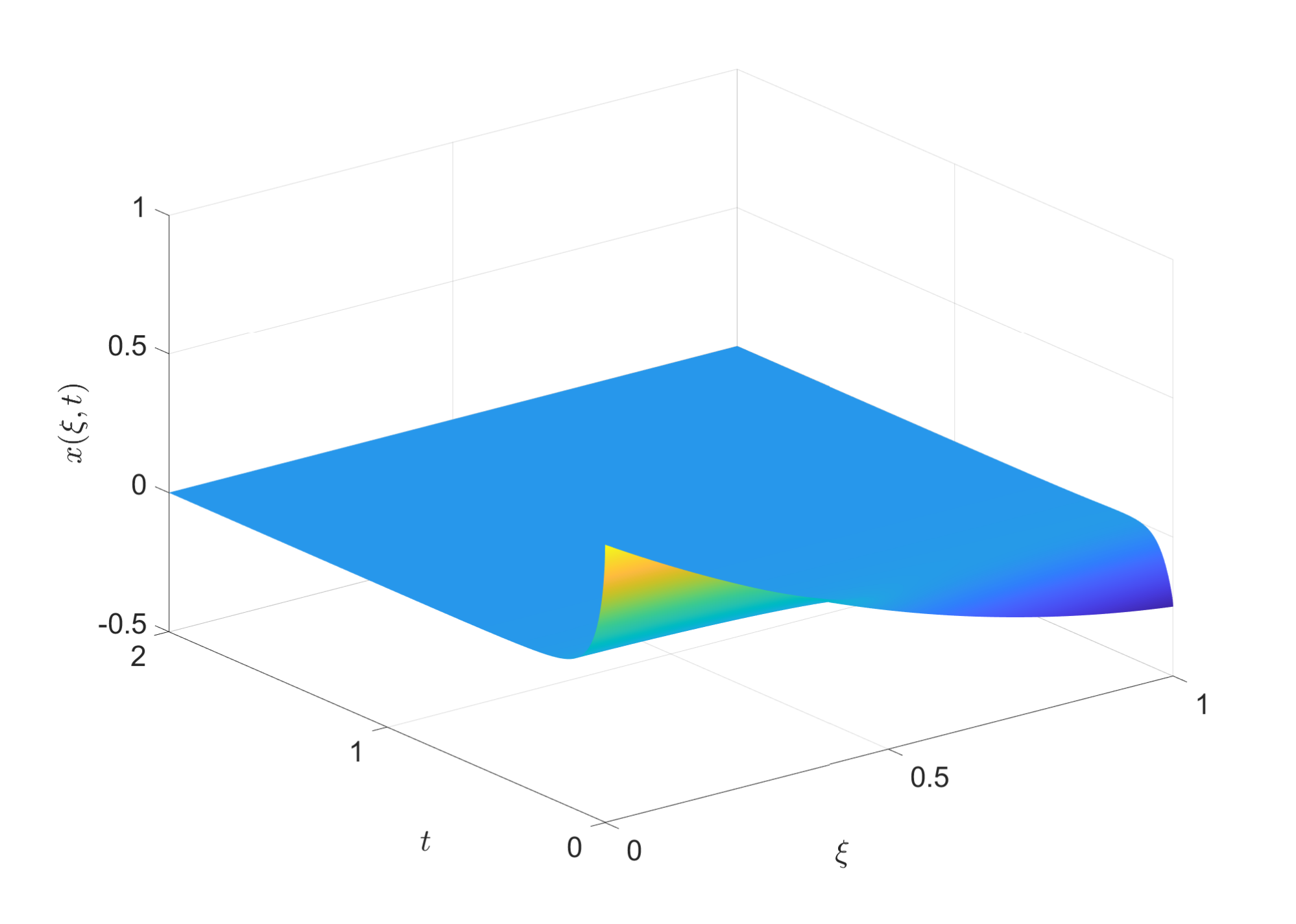}}\hspace{50pt}
		\subfigure[{Evolution of $x$ when   $x_0=0.8(\xi-0.5)(\xi-2)$.}]{
			\includegraphics[width=0.35\textwidth]{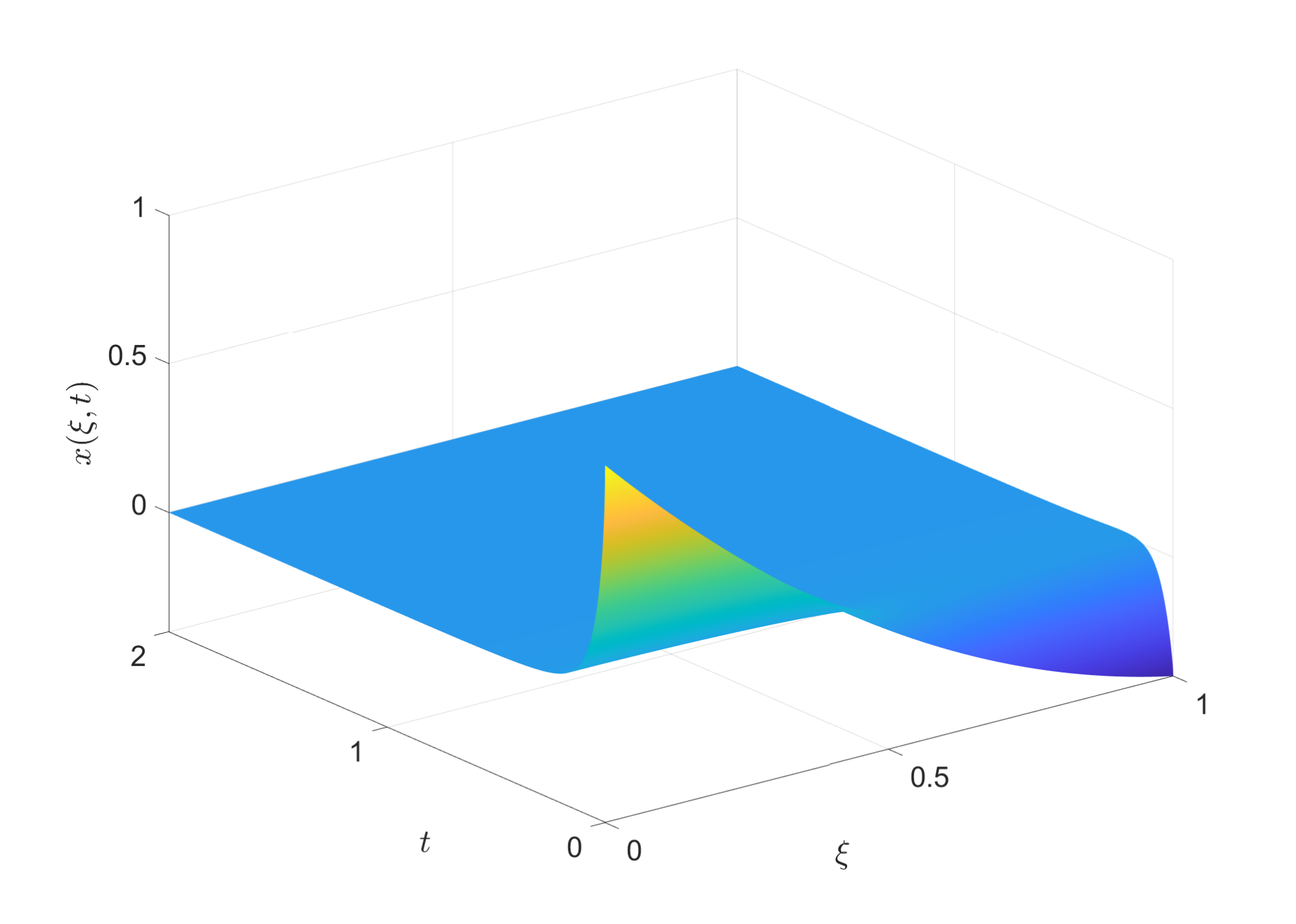}}
		\caption{Evolution of $x$   for {the disturbance-free} system \eqref{4.4}   {with different small initial data}.\label{fig6`}}
	\end{center}
\end{figure*}

\begin{figure*}[htpb!]
	\begin{center}
		\subfigure[{Evolution of $x$ {when     $A_1=1$}.}]{
			\includegraphics[width=0.35\textwidth]{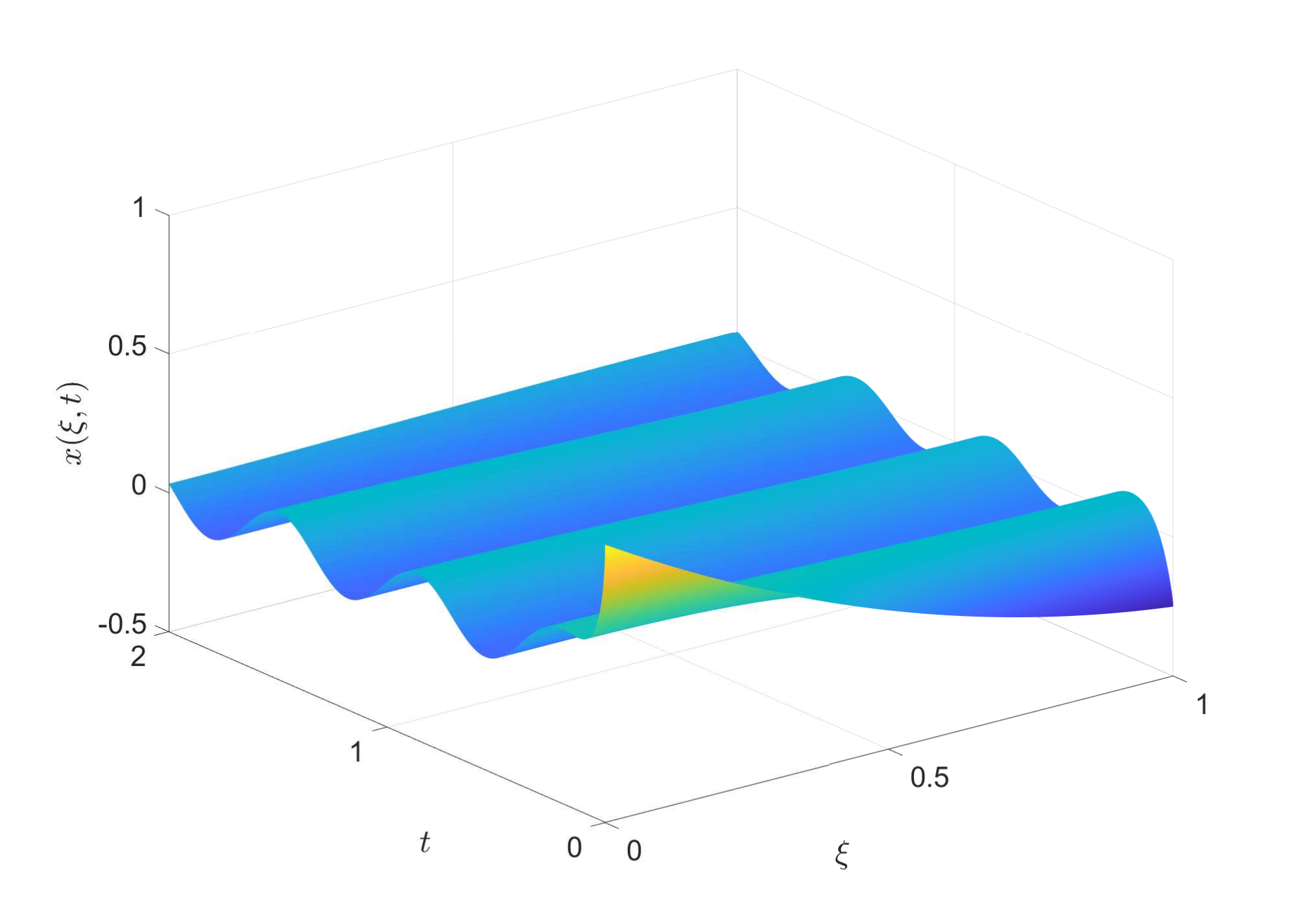}}\hspace{50pt}
		\subfigure[{Evolution of $ x $ when   { $A_1=3$}.}]{
			\includegraphics[width=0.35\textwidth]{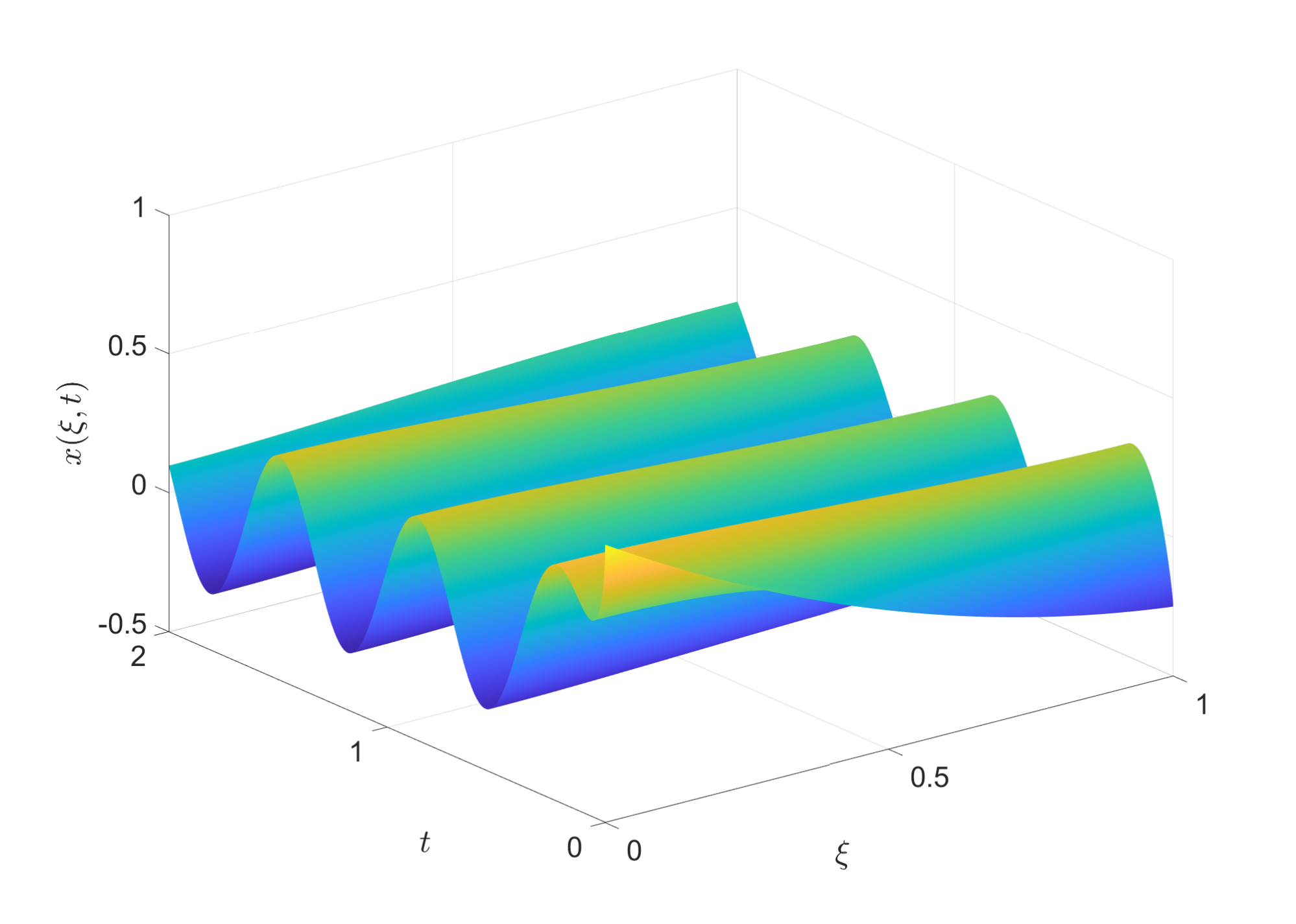}}
		\caption{Evolution of $x$   for system \eqref{4.4}  with different disturbances {when $x_0=0.5(\xi-0.5)(\xi-2)$}.\label{fig7}}
	\end{center}
\end{figure*}

\begin{figure*}[htpb!]
	\begin{center}
		\subfigure[{Evolution of $x$ when   $A_1=1$.}]{
			\includegraphics[width=0.35\textwidth]{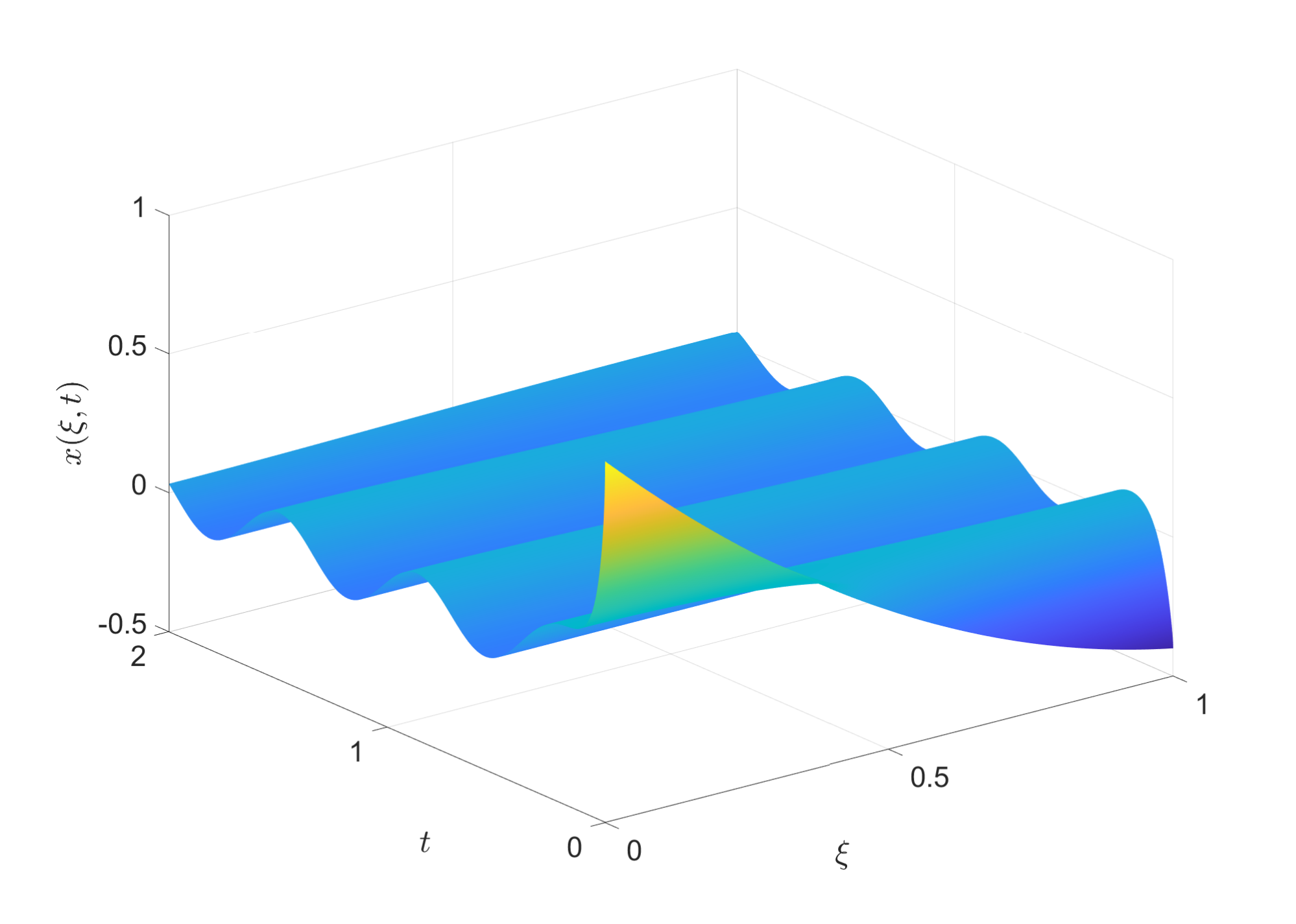}}\hspace{50pt}
		\subfigure[{Evolution of $ x $ when    $A_1=3$.}]{
			\includegraphics[width=0.35\textwidth]{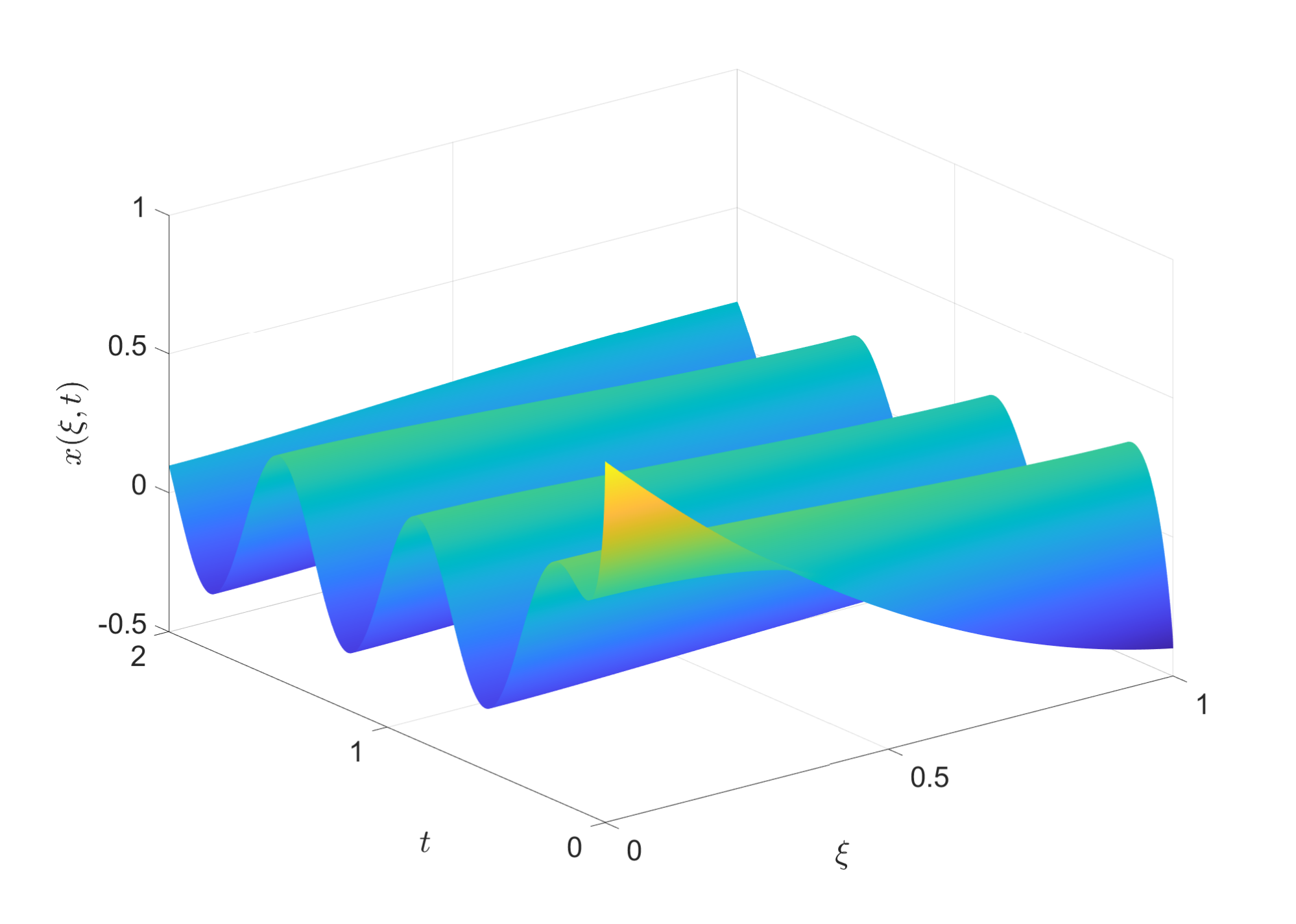}}
		\caption{Evolution of $x$   for system \eqref{4.4} with different disturbances when $x_0=0.8(\xi-0.5)(\xi-2)$.\label{fig8}}
	\end{center}
\end{figure*}
\begin{figure*}[htpb!]
	\begin{center}
		{ \includegraphics[width=0.35\textwidth,height=7pc]{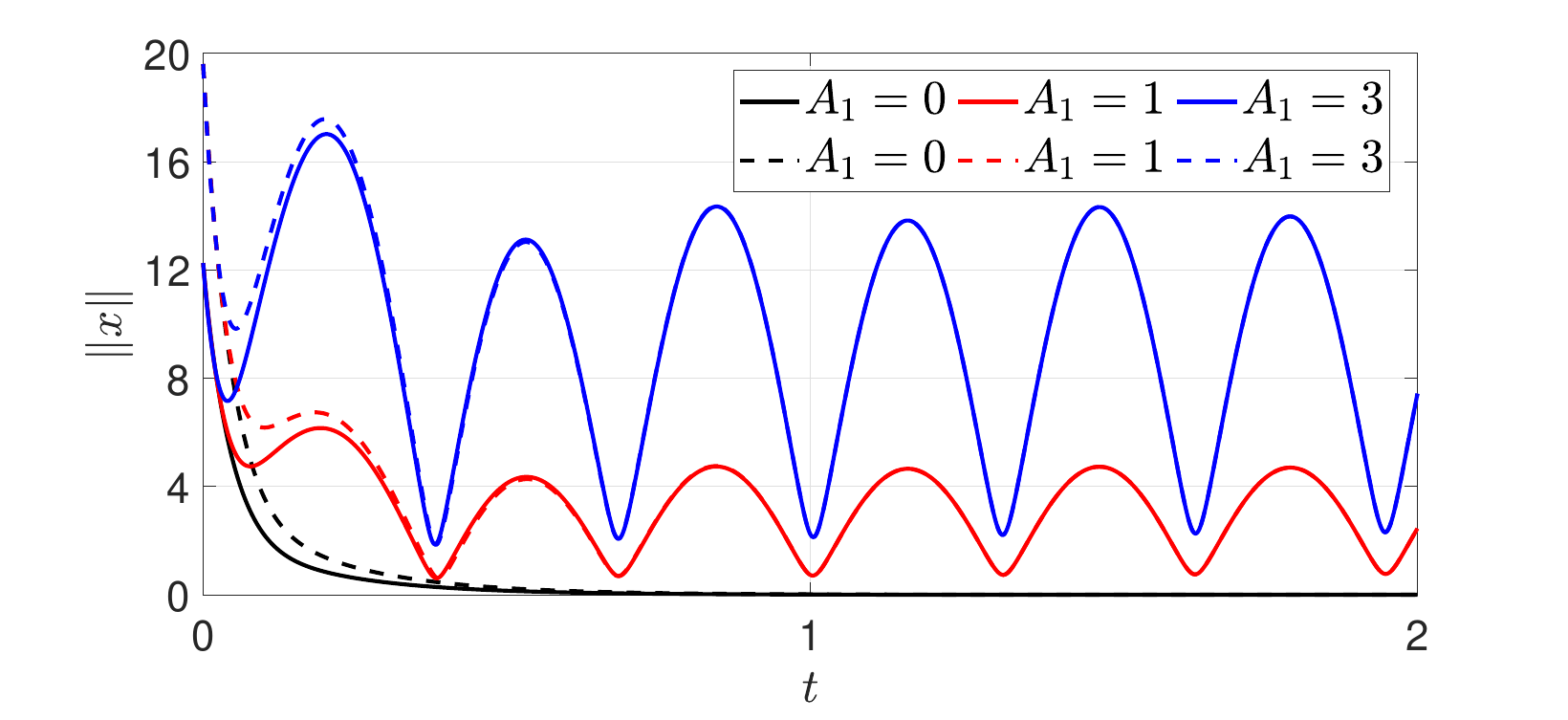}}
		\caption{{ Evolution of $\|x\|$ with different disturbances and initial data: solid {curve} for $x_0=0.5(\xi-0.5)(\xi-2)$ and  dashed {curve} for $x_0=0.8(\xi-0.5)(\xi-2)$.\label{fig9}}}
	\end{center}
\end{figure*}

\section{Conclusion}\label{conclusion}

In this work, we developed a set of  Lyapunov analytical tools for the  LiISS analysis of    non-autonomous infinite-dimensional systems  involving superlinear terms, corresponding to which   the nonlinear functionals  map the state from a small space to a large space. More precisely,  we  first provided several forms of generalized comparison principles for  nonlinear ODEs having time-varying coefficients, which enables us to conduct the LiISS analysis for non-autonomous infinite-dimensional systems  in the framework of Lyapunov stability theory, thereby avoiding the derivation of the {0-UASs (or 0-UGASs)}  via the semigroup theory.  Then, for non-autonomous infinite-dimensional systems,
we proved  an LiISS Lyapunov theorem   in the framework of Banach spaces and  provided sufficient conditions to ensure the existence of an LiISS-LF    in the framework of Hilbert spaces.
Through two examples, we illustrated how to use the developed result to verify the LiISS for a nonlinear finite-dimensional system   under linear state feedback control   and   a   nonlinear multi-dimensional parabolic PDE, respectively. In particular, we  demonstrated how to use the interpolation inequalities to handle superlinear terms in  the PDE and establish the LiISS by using the Lyapunov method.
It is noteworthy that for non-autonomous   boundary control  systems, constructing appropriate Lyapunov functionals  presents significant challenges.  Consequently, conducting the LiISS
analysis for such systems remains inherently difficult and will be investigated in future work.






\end{document}